\normalfont\fontsize{12}{15}\bfseries}{\thesection}{1em}{}
\normalfont\fontsize{12}{15}\bfseries}{\thesubsection}{1em}{}
\newtheorem{theorem}{Theorem}[section]
\newtheorem{lemma}[theorem]{Lemma}
\newtheorem{proposition}[theorem]{Proposition}
\newtheorem{corollary}[theorem]{Corollary}
\theoremstyle{definition}
\newtheorem{definition}[theorem]{Definition}
\newtheorem{remark}[theorem]{Remark}
\newcommand{\mb}{\mathbb}
\newcommand{\Q}{\mb{Q}}
\newcommand{\R}{\mb{R}}
\newcommand{\Z}{\mb{Z}}
\newcommand{\C}{\mb{C}}
\newcommand*{\sheafhom}{\mathcal{H}\kern -.5pt om}
\numberwithin{equation}{section}
\title[Hodge Structures of Global Smoothings]{On the Hodge Structures of Global Smoothings of Normal Crossing Varieties}
\author{Kuan-Wen Chen}
\address{Department of Mathematics, Columbia University, New York, NY 10027}
\email{kc3396@columbia.edu}
\begin{document}
\begin{abstract}
    Let $f:X \rightarrow \Delta $ be a one-parameter semistable degeneration of $m$-dimensional compact complex manifolds. Assume that each component of the central fiber $X_0$ is K\"ahler. Then, we provide a criterion for a general fiber to satisfy the $\partial\overline{\partial}$-lemma and a formula to compute the Hodge index on the middle cohomology of the general fiber in terms of the topological conditions/invariants on the central fiber. 
    
    We apply our theorem to several examples, including the global smoothing of $m$-fold ODPs, Hashimoto-Sano's non-K\"ahler Calabi-Yau threefolds \cite{HaSa}, and Sano's non-K\"ahler Calabi-Yau $m$-folds \cite{Sa}. 
    
    To deal with the last example, we also prove a Lefschetz-type theorem for the cohomology of the fiber product of two Lefschetz fibrations over $\mb{P}^1$ with disjoint critical locus. 
\end{abstract}

\maketitle

\section{Introduction}

The classical $\partial\overline{\partial}$-lemma for compact K\"ahler manifolds states that if we are given a smooth differential form $\alpha$ on a compact K\"ahler manifold which is $\partial$-closed, $\overline{\partial}$-closed and $d$-exact, then $\alpha$ is $\partial\overline{\partial}$-exact. It is natural to ask when the $\partial\overline{\partial}$-lemma holds beyond the category of compact K\"ahler manifolds. It is well-known that if $M$ is a compact complex manifold such that the Hodge to de Rham spectral sequence degenerates at the $E_1$ page, then the following three conditions are equivalent (cf. \cite[(4.3.1)]{deligne2}, \cite[(5.21)]{dgms}, \cite[Lemma 13.6]{BHPV}): 
\begin{enumerate}
    \item $M$ satisfies the $\partial\overline{\partial}$-lemma,
    \item the Hodge filtration $F^{\bullet}H^d(X)$ is $d$-opposed to its complex conjugate $\overline{F}^{\bullet}H^d(M)$ for all $d$,
    \item For all $d$, there is a Hodge decomposition $$H^d(M)=\bigoplus_{p} H^{p,d-p}(M),$$ where $H^{p,d-p}(M)=F^{p}H^d(M)\cap \overline{F}^{d-p}H^d(M)$.
\end{enumerate} Here, we recall that if $V$ is a complex vector space and $'F^{\bullet}$ and $''F^{\bullet}$ are two decreasing filtrations on $V$, then $'F^{\bullet}$ and $''F^{\bullet}$ are said to be $d$-opposed if, for every $p$, $$'F^{p}\oplus ''F^{d-p+1}\cong V.$$ In particular since the degeneration of Hodge to de Rham spectral sequence and the $d$-opposedness of the Hodge filtration with its complex conjugate are both open conditions, the $\partial\overline{\partial}$-lemma is an open condition. 

It is also well-known that if a compact complex manifold $M$ is dominated either by a compact K\"ahler manifold or by a compact complex manifold of the same dimension as $M$ that satisfies the $\partial\overline{\partial}$-lemma, then $M$ satisfies the $\partial\overline{\partial}$-lemma (cf. \cite[Theorem 5.22]{dgms} and the argument used in \cite[Lemma 7.28, Remark 7.29]{Voi1}). In particular, any Moishezon manifold and any compact complex manifold of Fujiki class $\mathcal{C}$ (i.e., bimeromorphic to a compact K\"ahler manifold) satisfy the $\partial\overline{\partial}$-lemma. Conversely, if $M$ is a compact complex manifold that satisfies the $\partial\overline{\partial}$-lemma and $Z$ is a closed submanifold of $M$ the satisfies the $\partial\overline{\partial}$-lemma, then the blowup $Bl_{Z}M$ of $M$ along $Z$ also satisfies the $\partial\overline{\partial}$-lemma (\cite{Stel},\cite{RYY},\cite{YY},\cite{DTNA}). Thus, the $\partial\overline{\partial}$-lemma is a bimeromorphic invariant in dimension less or equal to three, and also, if $M$ is a compact complex analytic space with at worst isolated singularities, then one resolution of singularities of $M$ satisfies the  $\partial\overline{\partial}$-lemma if and only if every resolution of singularities of $M$ satisfies the $\partial\overline{\partial}$-lemma. But in general, whether the $\partial\overline{\partial}$-lemma is a bimeromorphic invariant is still an open conjecture. 

There are several examples of compact complex manifolds that satisfy the $\partial\overline{\partial}$-lemma but not of Fujiki class $\mathcal{C}$. First, since the  $\partial\overline{\partial}$-lemma is an open condition under deformation but being of Fujiki class $\mathcal{C}$ isn't (\cite{Campana},\cite{CYS}), the small deformation of some Moishezon twistor spaces produce threefold examples. Second, some examples are found among solvmanifolds (\cite{AK1},\cite{AK2},\cite{KAS}). Third, \cite{DTNA} constructs a simply connected threefold example. Fourth, Friedman \cite{RF1} proved that the $\partial\overline{\partial}$-lemma holds for general nearby global smoothings of Calabi-Yau threefolds with ordinary double points (i.e. general (nearby) Clemens manifolds) and later Li \cite{CL} proved that Friedman's result holds without the general assumption and showed that the Hodge structure on the middle cohomology of any (nearby) Clemens manifold together with the cup product is polarized Hodge structure. Fifth, \cite{KS} constructs some other simply connected threefold examples as well as some interesting examples of compact complex manifolds that don't satisfy the $\partial\overline{\partial}$-lemma.

This paper aims to study the general phenomenon behind the existence of polarized Hodge structures on the middle cohomology of the (nearby) Clemens manifolds. Recall that Clemens manifolds are constructed by first contracting several rational curves on a projective Calabi-Yau threefold to ordinary double points and then taking the global smoothing of the ordinary double points on such a singular compact complex analytic space. Thus, we can propose the following general question: if $M$ is a singular compact complex analytic space that admits a K\"ahler resolution of singularities as well as a global smoothing, when does the global smoothing satisfy the $\partial\overline{\partial}$-lemma? Also, if this is the case, can we compute the Hodge index on the middle cohomology of the global smoothing? 

 Consider a one-parameter degeneration of $m$-dimensional compact complex manifolds $f:X \rightarrow \Delta$, where $X$ is a connected complex manifold, $\Delta$ is the unit disk, and $f$ is a proper holomorphic map that is smooth over the punctured disk $\Delta^*.$ By the semistable reduction theorem, we may assume $E:=f^{-1}(0)$ is a simple normal crossing divisor, all of whose components are reduced. Suppose, moreover, that the irreducible components of $E$ are all K\"ahler or, more generally, each $k$-fold intersection of the irreducible components of $E$ satisfies the $\partial \overline{\partial}$-lemma. Then Steenbrink (cf. \cite[Chap 11]{PS}) constructs a limiting mixed Hodge structure on $H^d(X_{\infty},\C)$ for all $d$, where $X_{\infty}$ is the canonical fiber. We will denote it by $(H^d_{\text{lim}},W^{St},F_{\text{lim}})$. Steenbrink shows that in this case, the monodromy operator $T$ on $H^d(X_t,\C)$ induced by counter-clockwise loop is unipotent. Moreover, if we consider the natural extension of $T$ to $H^d_{\text{lim}}$, still denoted by $T$, then by Steenbrink's construction, the nilpotent operator $N:=\text{log}(T)$ is a morphism of mixed Hodge structure of type $(-1,-1)$, i.e. $N$ satisfies $N W^{St}_r\subset W^{St}_{r-2}$ and $N F_{\lim}^p\subset F_{\lim}^{p-1}$. Furthermore, as a consequence of the theory, the Hodge to de Rham spectral sequence on the general fiber $X_q$ degenerates at $E_1$ for $|q|$ sufficiently small.
 
On the other hand, a well-known theorem of Schmid (\cite{schmid}) is that the degeneration of a one-parameter integral polarized variation of Hodge structure of weight $d$ forms a polarized mixed Hodge structure ($H$,$W$,$F$) (see Definition \ref{Def}), where $W=W(N,d)$ is the monodromy weight filtration of $N$ centered at $d$ 

 This paper aims to deal with the converse situation of Schmid's theorem above in the geometric setup. Consider the following two situations: 
\begin{enumerate}
    \item[(\textbf{A})] For the limiting mixed Hodge structure $(H^d_{\text{lim}},W^{St},F_{\text{lim}})$ on $H^d(X_{\infty},\C)$, we assume that $W^{St}=W(N,d)$.
    \item[(\textbf{B})] Suppose we are in the case $d=m$ and that $W^{St}=W(N,m)$ on $H^m(X_{\infty},\C)$, i.e. (\textbf{A}) holds. Let $Q(\bullet,\bullet)$ denote the natural cup product on $H^m(X_{\infty},\C)\cong H^m(X_q,\C)$. (If there is no ambiguity, we will denote the natural cup product on any compact complex manifold by $Q(\bullet,\bullet)$.) Write $S(\bullet,\bullet)=\epsilon(m)Q(\bullet,\bullet)$, where $\epsilon(a):=\frac{a(a-1)}{2}$. Let $C$ denote the Weil operator, which is defined as $(\sqrt{-1})^{p-r}$ on the $(p,r)$-component of a Hodge structure of weight $p+r$. Since (\textbf{A}) holds, for all $l\geq 0$, the Hermitian form $$(u,v)\mapsto S(Cu,(-N)^{l}\ \overline{v})$$ is nongenerate on $\textup{Gr}^{W^{St}}_{m+l}H^m_{\lim}$ (cf. Remark \ref{neg} for why we use $-N$ instead of $N$). Let $(s^{p,m+l-p}_{+},s^{p,m+l-p}_{-})$ be the signature of this Hermitian form on the $(p,m+l-p)$-component of the Hodge structure (of weight $d+l$) on the primitive part $$P_{m+l} = \text{ker}(N^{l+1}:\textup{Gr}^{W^{St}}_{m+l}H^m_{\lim}\rightarrow \textup{Gr}^{W^{St}}_{m-l-2}H^m_{\lim}).$$ 

    Given an integer $n$, set $n_{+}=\textup{max}\{n,0\}$. Since $(\textbf{A})$ holds, for all $l$ with $-m\leq l \leq m$, there is an $N$-Lefschetz decomposition: $$\textup{Gr}^{W^{St}}_{m+l}H^m_{\lim}=\bigoplus_{r\geq (-l)_{+}} N^r P_{m+l+r}.$$ Thus for $0\leq p\leq m+l$, define the invariants $$\textbf{S}_{+}^{p,m+l-p}=\sum_{r\geq (-l)_{+}}s_{+}^{p+r,m+l-p+r}$$ and $$\textbf{S}_{-}^{p,m+l-p}=\sum_{r\geq (-l)_{+}}s_{-}^{p+r,m+l-p+r},$$ which corresponds to (the $N$-Lefschetz decomposition of) the $(p,m+l-p)$-component of the Hodge structure on $\textup{Gr}^{W^{St}}_{m+l}H^m_{\lim}$. Note that $\textbf{S}_{+}^{p,m+l-p}+\textbf{S}_{-}^{p,m+l-p}$ is equal to the dimension of the $(p,m+l-p)$-component of the Hodge structure on $\textup{Gr}^{W^{St}}_{m+l}H^m_{\lim}$, but when $l\geq 0$, $(\textbf{S}_{+}^{p,m+l-p},\textbf{S}_{-}^{p,m+l-p})$ is not the signature of the Hermitian form $S(C\bullet,(-N)^l\overline{\bullet})$ on the $(p,m+l-p)$-component of the Hodge structure on $\textup{Gr}^{W^{St}}_{m+l}H^m_{\lim}$ since $S(Nu,v)+S(u,Nv)=0$ for all $u$ and $v$.

\end{enumerate}

Then, the main theorem of the article is:

\begin{theorem} \label{Corg}
    
Let $f:X \rightarrow \Delta$ be a one-parameter semi-stable degeneration of $m$-dimensional compact complex manifolds such that all irreducible components of the central fiber $X_0$ are  K\"ahler or, more generally, each $k$-fold intersection of the irreducible components of $X_0$ satisfies the $\partial \overline{\partial}$-lemma. Let $(H^d_{\textup{lim}},W^{St},F_{\textup{lim}})$ be the limiting mixed Hodge structure on $H^d(X_{\infty},\C)$ Then 
\begin{enumerate}
\item In the situation (\textbf{A}), the Hodge filtration $F_q^{\bullet}$ of $H^{d}(X_q,\C)$ on the nearby fiber $X_q:=f^{-1}(q)$ is $d$-opposed to its complex conjugate for $|q|$ sufficiently small.

\item In the situation (\textbf{B}), the signature of $S(C\bullet, \overline{\bullet})$ on the $(p,m-p)$-component of the Hodge structure on $H^m(X_q,\C)$ is
$$\left(\sum_{ k=0}^m \textup{\textbf{S}}_{+}^{p,k},\sum_{ k=0}^m \textup{\textbf{S}}_{-}^{p,k}\right)$$
for $|q|$ sufficiently small.
In particular, if $(H^m_{\lim}, W^{St}, F_{\lim})$ is a polarized mixed Hodge structure with respect to the cup product $S(\bullet,\bullet)$ (i.e., $s^{p,r}_{-}=0$ for all $p$ and $r$), then the middle cohomology $H^m(X_q,\C)$ together with the cup product $S(\bullet,\bullet)$ is a polarized Hodge structure for $|q|$ sufficiently small.
\end{enumerate}
\end{theorem}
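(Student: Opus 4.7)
The strategy combines Steenbrink's limiting mixed Hodge structure with a Schmid-type nilpotent orbit approximation and the standard linear algebra of mixed Hodge structures whose weight filtration equals $W(N,d)$. The geometric input reduces both parts of the theorem to purely linear-algebraic assertions about $(H^d_{\lim}, W^{St}, F_{\lim})$ equipped with the nilpotent $N$.

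I would first isolate the key asymptotic identification: under Deligne's canonical extension of $R^d f_* \mathbb{C}_{X^*}$ across $0 \in \Delta$, the nearby Hodge filtration $F_q^{\bullet}$ on $H^d(X_q, \mathbb{C}) \cong H^d_{\lim}$ satisfies $F_q^{\bullet} = e^{\ell(q) N} F_{\lim}^{\bullet} + O(q)$, where $\ell(q) = \frac{1}{2\pi i}\log q$. This is Schmid's nilpotent orbit statement; in Steenbrink's setting (each stratum of $X_0$ satisfies the $\partial\overline{\partial}$-lemma) it is accessible via the Steenbrink double complex together with Griffiths transversality of the varying Hodge filtration on the canonical extension.

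For Part (1), after this identification it suffices to show that the nilpotent orbit filtration $e^{zN} F_{\lim}^{\bullet}$ and its complex conjugate $e^{\bar z N} \overline{F_{\lim}}^{\bullet}$ are $d$-opposed for $\mathrm{Im}(z)$ sufficiently large. This is classical: for any mixed Hodge structure whose weight filtration is $W(N,d)$, Deligne's bigrading $H = \bigoplus_{p,q} I^{p,q}$ satisfies $I^{p,q} \subset \textup{Gr}^{W}_{p+q}$ and interacts with $F_{\lim}$ up to a nilpotent correction; since $N$ is of type $(-1,-1)$, the twist $e^{zN}$ for $\mathrm{Im}(z) \gg 0$ rotates the bigrading into a pure weight-$d$ Hodge decomposition. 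Openness of $d$-opposedness then extends the conclusion from the nilpotent orbit to $F_q^{\bullet}$ itself.

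For Part (2), the same identification transports the cup product $S$ on $H^m(X_q, \mathbb{C})$ to the limit form $S$ on $H^m_{\lim}$. Under the nilpotent orbit, the $(p, m-p)$-component of the nearby pure Hodge structure decomposes, via Deligne's bigrading of the limiting MHS combined with the $N$-Lefschetz decomposition $\textup{Gr}^{W^{St}}_{m+l} H^m_{\lim} = \bigoplus_{r \geq (-l)_+} N^r P_{m+l+r}$, into pieces whose primitive source lies in the $(p+r, m+l-p+r)$-part of $P_{m+l+r}$. A direct computation, using $S(Nu,v) + S(u,Nv) = 0$ together with the $\mathfrak{sl}_2$-relations implicit in the Lefschetz decomposition, should show that the Hermitian form $S(C_q \bullet, \overline{\bullet})$ on each summand $N^r P_{m+l+r}^{p+r, m+l-p+r}$ is a positive real multiple of the primitive form $S(C\bullet, (-N)^l \overline{\bullet})$ on $P_{m+l+r}^{p+r, m+l-p+r}$, whose signature is $(s_+^{p+r,m+l-p+r}, s_-^{p+r,m+l-p+r})$ by definition. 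Summing over $r \geq (-l)_+$ produces $\mathbf{S}^{p,m+l-p}_{\pm}$, and summing over the admissible range of $l$ (equivalently $k = m+l-p \in \{0,\ldots,m\}$) yields the claimed total signature. In the polarized case ($s^{p,r}_- = 0$ for all $p,r$) the nearby form is positive-definite on every $(p,m-p)$-piece, so the cup product polarizes the nearby Hodge structure.

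The main obstacle is the signature computation of Part (2): tracking signs and normalization constants in the identification between $S(C_q \bullet, \overline{\bullet})$ and the primitive forms $S(C\bullet, (-N)^l \overline{\bullet})$ on each Lefschetz summand. This requires expanding $e^{\ell(q)N}$ inside the Weil operator $C_q$ of the nearby Hodge structure and invoking Cattani--Kaplan--Schmid type polarization relations on the limit. The appearance of $(-N)^l$ rather than $N^l$ (flagged in the problem statement) and the normalization constant $\epsilon(m)$ relating $S$ to the cup product $Q$ are the critical sign points to verify; by contrast, justifying the nilpotent-orbit asymptotics in the non-K\"ahler Steenbrink setting is largely bookkeeping once the limit MHS is in hand.
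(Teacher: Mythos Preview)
Your high-level architecture matches the paper's: reduce to an abstract statement about the nilpotent orbit $e^{zN}F_{\lim}$ via Steenbrink's canonical extension, then pass from the nilpotent orbit to the nearby filtration by an asymptotic comparison. But there are two genuine gaps.

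\textbf{The converse nilpotent orbit theorem in the unpolarized/signature setting is not classical.} You write that $d$-opposedness of $e^{zN}F_{\lim}$ for $\mathrm{Im}(z)\gg 0$ ``is classical'' via Deligne's bigrading, and that the signature on each Lefschetz summand follows by a direct computation. The classical Cattani--Kaplan--Schmid statement assumes a \emph{polarized} mixed Hodge structure; here one only has $W^{St}=W(N,d)$ (no positivity), and for Part (2) one needs the full signature, not just positivity. Proving these two facts (the paper's Theorems~\ref{thmcks2} and~\ref{thmcks}) is the technical core of the argument and occupies all of Section~6. The obstruction is precisely that the mixed Hodge structure need not split over $\mathbb{R}$: Deligne's bigrading only satisfies $\overline{I^{p,q}}\equiv I^{q,p}$ modulo lower weight, so the Lefschetz summands are not orthogonal for $S(C\bullet,\overline{\bullet})$, and your ``direct computation'' on each summand does not close. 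The paper handles this by an explicit combinatorial estimate (a weight-versus-degree inequality $\tau+2\sigma\le -1$) showing the non-split correction terms are subleading in $\mathrm{Im}(z)$, together with a refined one-step filtration $\hat F^\bullet$ whose successive principal minors are controlled (Lemma~\ref{linear}).

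\textbf{``Openness'' is not enough to pass from the nilpotent orbit to $F_q$.} Both $e^{\ell(q)N}F_{\lim}$ and $F_q$ move as $q\to 0$; there is no fixed open set in which to invoke openness. What is needed is a quantitative comparison: the relevant top-wedge (and, for Part~(2), the leading principal minors of the Hermitian matrix) for the nilpotent orbit is a nonvanishing polynomial in $t=\mathrm{Im}(z)$ of some order $\ge 0$, while the perturbation coming from $F_q - e^{\ell(q)N}F_{\lim}$ is $O(t^Me^{-2\pi t})$. The paper makes this explicit in the proof of Theorem~\ref{main}. You also omit one geometric input for Part~(2): the first Hodge--Riemann relation $S(\mathcal F_q^p,\mathcal F_q^{m-p+1})=0$ must hold for the \emph{varying} filtration, not just at the limit; this is supplied by Lemma~\ref{rmkg} (degeneration of the Hodge--de Rham spectral sequence on $X_q$).
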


\begin{remark}\begin{enumerate}
    \item The theorem above describes the general phenomenon behind Li's computation for Clemen's manifolds in \cite{CL}. We will see later that the leading term in his computation corresponds to the nilpotent orbit, and the nearby Hodge filtration approximates the nilpotent orbit in a certain way (sufficiently fast) so that it still underlines a polarized Hodge structure.
 \item By Steenbrink's theory, the assumption $W^{St}=W(N,d)$ in Situation (\textbf{A}) is only a topological condition on the central fiber $X_0$. Indeed, this topological condition only involves the Gysin morphisms and the restriction morphisms of the strata of $X_0$. Also, the computation of the Hodge index $(s_{+}^{p,q},s^{p,q}_{-})$ in Situation (\textbf{B}) can be reduced to the computation of the Hodge index on the middle cohomology of the strata of $X_0$ by Fujisawa's theory (\cite{fujisawa}, or see Proposition \ref{Fu} and Proposition \ref{Fu2}). This will be elaborated in Section $3$. Hence, our theorem provides a criterion of the $\partial\overline{\partial}$-lemma on the general fiber and a formula to compute the Hodge index on the middle cohomology of the general fiber in terms of the topology of the singular fiber.
\end{enumerate}
\end{remark}

The first consequence of Theorem \ref{Corg} is based on a theorem of Guill\'en-Navarro Aznar. Suppose that $X_0$ satisfies a strong K\"ahler type condition: there exists a class in $H^2(X_0,\R)$ that restricts to a K\"ahler class on each irreducible component. Then Guill\'en-Navarro Aznar (\cite{guillen}, or cf. \cite[\S 11.3]{PS}) shows that there is a polarized Hodge-Lefschetz module structure on ${}_{W^{St}}E_1$, and thus Steenbrink's weight filtration $W^{St}$ and the monodromy weight filtration $W(N,d)$ coincides on $H^d(X_{\infty},\C)$ for all $d$. (The coincidence of $W^{St}$ and $W(N,d)$ for a projective degeneration is first proved by Saito \cite[4.2.5 Remarque]{saito} and Usui \cite[(A.1)]{usui} independently.) Therefore, combined with Theorem \ref{Corg} and Fujisawa's theory (\cite{fujisawa}, or see Proposition \ref{Fu} and Proposition \ref{Fu2}), we can prove the following theorem:

\begin{theorem}[Corollary \ref{kah}] \label{kah'}
 Let $f: X\rightarrow\Delta$ be a semistable degeneration of $m$-dimensional compact complex manifolds. Suppose that there exists a class $L\in H^2(X_0,\R)$ which restricts to a K\"ahler class on each component of the normal crossing complex analytic space $X_0$. Then 
 \begin{enumerate}
     \item The general fiber $X_q$ satisfies the $\partial\overline{\partial}$-lemma for $|q|$ sufficiently small. \item If we let $h^{a,b}_{q}=\textup{dim}_{\C}H^{b}(X_q,\Omega_{X_q}^a)$, then the signature of $S(C\bullet,\overline{\bullet})$ on the $(p,m-p)$-component of the Hodge structure on $H^m(X_q,\C)$ is $$\left(\sum_{r\geq 0}(h^{p-2r,m-p-2r}_q-h^{p-2r-1,m-p-2r-1}_q) ,\sum_{r\geq 0}(h^{p-2r-1,m-p-2r-1}_q-h^{p-2r-2,m-p-2r-2}_q)\right)$$ for $|q|$ small. 
 \item In particular, when $m$ is even, then the general fiber $X_q$ satisfies the Hodge index theorem for compact K\"ahler manifolds for $|q|$ small: the signature of the cup product $Q(\bullet,\bullet)$ on $H^m(X_q,\R)$ is $\sum_{a,b\geq 0}(-1)^{a}h^{a,b}_q$ for $|q|$ small.

 \end{enumerate}
\end{theorem}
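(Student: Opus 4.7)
The plan is to derive this corollary directly from Theorem~\ref{Corg}, with two preliminary ingredients: (i) Guillén–Navarro Aznar's theorem, used to verify that hypothesis~(\textbf{A}) holds for every $d$, and (ii) Fujisawa's theory (Propositions~\ref{Fu} and~\ref{Fu2}), used to rewrite the abstract signature data $\mathbf{S}_\pm^{p,k}$ supplied by Theorem~\ref{Corg}(2) in terms of the Hodge numbers $h^{a,b}_q$ of the general fiber.

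First, the existence of a class $L\in H^2(X_0,\R)$ restricting to a K\"ahler class on every component of $X_0$ places us in the setting of Guillén–Navarro Aznar, so the Steenbrink $E_1$-page $\,{}_{W^{St}}E_1$ acquires a polarized Hodge–Lefschetz module structure. As recalled in the introduction, this forces $W^{St}=W(N,d)$ on $H^d(X_\infty,\C)$ for every $d$, which is precisely hypothesis~(\textbf{A}). Applying Theorem~\ref{Corg}(1) with each $d$ then yields part~(1), that $X_q$ satisfies the $\partial\overline{\partial}$-lemma for $|q|$ small.

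For part~(2), specialize to $d=m$ and apply Theorem~\ref{Corg}(2). The polarization of the Hodge–Lefschetz module by $L$ forces the negative primitive signatures $s_{-}^{p,r}$ to vanish, and Fujisawa's theory expresses the remaining $s_{+}^{p,r}$ as dimensions of primitive Hodge pieces on the strata of $X_0$. Reassembling these via the $N$-Lefschetz decomposition identifies $\mathbf{S}_{\pm}^{p,k}$ with certain alternating sums of Hodge-bigraded dimensions of $\mathrm{Gr}^{W^{St}}_{\bullet} H^m_{\lim}$. Since the limiting mixed Hodge structure is compatible with the $E_1$-degeneration on $X_q$, summing these graded dimensions over the monodromy weight recovers the nearby Hodge numbers $h^{a,b}_q$. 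The telescoping cancellations then collapse the double sum $\sum_{k}\mathbf{S}_{\pm}^{p,k}$ into the two alternating series claimed in~(2). This bookkeeping—tracking exactly which Lefschetz summand of $H^m_{\lim}$ contributes to each $h^{a,b}_q$ and with which sign—is the main obstacle of the proof.

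For part~(3), sum the $(p,m-p)$-signatures from~(2) over $p$ to obtain the signature of the Hermitian form $S(C\bullet,\overline{\bullet})$, and hence (since $Q$ and $S$ differ only by the scalar $\epsilon(m)$, which for even $m\geq 2$ is a positive integer) the signature of $Q$ on $H^m(X_q,\R)$. The paired telescoping series in~(2) then regroup by standard Hodge-symmetry manipulations into the single alternating sum $\sum_{a,b\geq 0}(-1)^a h^{a,b}_q$, recovering the classical Hodge index formula for compact K\"ahler manifolds of even dimension.
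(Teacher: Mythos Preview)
Your outline for part~(1) is correct and matches the paper exactly: Guillén--Navarro Aznar gives $W^{St}=W(N,d)$, and Theorem~\ref{Corg}(1) finishes.

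For part~(2), however, there is a genuine gap. You write that ``the polarization of the Hodge--Lefschetz module by $L$ forces the negative primitive signatures $s_-^{p,r}$ to vanish.'' This is false, and the stated formula in~(2) already shows it: the second component of the signature is not identically zero. The polarized Hodge--Lefschetz structure guarantees positivity only on the \emph{bi-primitive} pieces, i.e.\ those primitive for both $N$ and $L$. The $N$-primitive subspace $P^{p,s}_m$ on which $(s_+^{p,s},s_-^{p,s})$ is defined still carries an $L$-Lefschetz decomposition
\[
P^{p,s}_m=\bigoplus_{r\geq 0}L^r\bigl(P^{p-r,s-r}_{m-2r}\bigr)_0,
\]
and the paper's key computation (using Proposition~\ref{Fu2} to identify the cup product with Guillén--Navarro Aznar's pairing $\psi$) shows that $S(C\bullet,(-N)^{p+s-m}\overline{\bullet})$ is positive definite on the summand $L^r(P^{p-r,s-r}_{m-2r})_0$ when $r$ is even and \emph{negative} definite when $r$ is odd. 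Thus
\[
s_\pm^{p,s}=\sum_{r\geq 0}\dim_{\C}\bigl(P^{p-2r-\epsilon,s-2r-\epsilon}_{m-4r-2\epsilon}\bigr)_0,\qquad \epsilon=0\text{ for }+,\ \epsilon=1\text{ for }-.
\]
Only after this step does one feed the result into Theorem~\ref{Corg}(2), sum over $l$ and $r$, and use the identity
\[
\sum_{l,r}\dim_{\C}\bigl(P^{p+r-a,m+l-p+r-a}_{m-2a}\bigr)_0=h^{p-a,m-p-a}_q-h^{p-a-1,m-p-a-1}_q
\]
(valid because the $L$-primitive dimension is the difference of consecutive Hodge numbers of $H^\bullet(X_\infty)$) to obtain the telescoping form claimed in~(2). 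Your proposal skips the $L$-Lefschetz decomposition entirely, which is the main content of the argument; without it there is no mechanism to produce the alternating-sign structure in the final formula.
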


\begin{remark}
Roughly speaking, the theorem above says that under such a strong K\"ahler type condition on the central fiber, the general fiber behaves like a K\"ahler manifold. Indeed, if $M$ is a $m$-dimensional compact K\"ahler manifold, then from the Lefschetz decomposition on the middle cohomology, we have $$H^{p,m-p}(M)=\bigoplus_{r\geq 0}L^rH^{p-r,m-p-r}_{\textup{prim}}(M),$$ where $H^{a,b}_{\textup{prim}}(M)$ denotes the $(a,b)$-component of the primitive cohomology of $M$ and $L$ is the Lefschetz operator. Denote the dimension of $H^{a,b}(M)$ and $H^{a,b}_{\textup{prim}}(M)$ by $h^{a,b}$ and $h^{a,b}_{\text{prim}}$. Then the signature of $S(C\bullet,\overline{\bullet})$ on $H^{p,m-p}(M)$ is 
$$\left(\sum_{r\geq 0}h^{p-2r,m-p-2r}_{\textup{prim}} ,\sum_{r\geq 0}h^{p-2r-1,m-p-2r-1}_{\textup{prim}}\right)$$
$$=\left(\sum_{r\geq 0}(h^{p-2r,m-p-2r}-h^{p-2r-1,m-p-2r-1}),\sum_{r\geq 0}(h^{p-2r-1,m-p-2r-1}-h^{p-2r-2,m-p-2r-2})\right),$$ which is the same as the formula in (2) above. It is interesting to ask whether the general fibers are K\"ahler manifolds if there are K\"ahler forms on all irreducible components of $X_0$ that agree on all intersections.
\end{remark}

Now, we consider a $m$-dimensional compact complex analytic space $X_0$ with at most ordinary double points, which admits a resolution of singularities that satisfies the $\partial \overline{\partial}$-lemma. Let $\{x_i|1\leq i\leq l\}$ be the set of ordinary double points on $X_0$ and $\tilde{X_0}$ be the blowup of $X_0$ at all ordinary double points. The exceptional divisors $Q_i$ in $\tilde{X_0}$ over ordinary points $x_i$ are $m-1$-dimensional quadrics. If $m$ is odd, each $Q_i$ is an even-dimensional quadric with standard $\frac{m-1}{2}$-dimensional planes whose homology classes will be denoted by $A_i$ and $B_i$. Define the number $$R=\textup{dim}_{\C}\textup{ker}\left(\bigoplus_{i=1}^l H^{m-1}_{\textup{prim}}(Q_i)\xrightarrow{-\gamma} H^{m+1}(\tilde{X_{0}})\right),$$ where $\gamma$ is the Gysin morphism. The number $R$ represents the number of linear relations of $A_i-B_i$ in $H_{m-1}(\tilde{X_{0}})$. If $m$ is even, consider the kernel of the restriction map $$V^m:=\textup{ker}\left(H^{m}(\tilde{X_0})\rightarrow\bigoplus_{i=1}^{l}H^{m}(Q_i) \right),$$ which has dimension $h^{m}(\tilde{X_0})-l$ if $X_0$ admits a global smoothing. Then, by applying Theorem \ref{Corg} to this situation, we get the following theorem, which generalizes the theorem of Friedman \cite{RF1} and Li \cite{CL} for Clemens manifolds:

\begin{theorem}[Theorem \ref{thmodp}] \label{corodp}
      Let $f: X\rightarrow \Delta$ be a one-parameter degeneration of $m$-dimensional compact complex manifolds such that the central fiber $X_0$ has at worst ordinary double points. Suppose that $X_0$ admits a resolution of singularities that satisfies the $\partial \overline{\partial}$-lemma. Then the nearby fiber $X_q$ satisfies the $\partial \overline{\partial}$-lemma for $|q|$ small. 
     
     Let $\left(h^{k,m-k}_+(\tilde{X_0}),\ h^{k,m-k}_{-}(\tilde{X_0})\right)$ be the signature of $S(C\bullet,\overline{\bullet})$ on $H^{k,m-k}(\tilde{X_0})$. When $m$ is even, let $\left(\hat{h}^{m/2,m/2}_+(\tilde{X_0}),\ \hat{h}^{m/2,m/2}_{-}(\tilde{X_0})\right)$ be the signature of $S(C\bullet,\overline{\bullet})$ on the $(m/2,m/2)$ part of $V^m$. 
    \begin{enumerate}
        \item If $m$ is odd, then the signature of $S(C\bullet,\overline{\bullet})$ on $H^{k,m-k}(X_q)$ is $$\left\{\begin{array}{cc}
        \left(h^{k,m-k}_+(\tilde{X_0}),\ h^{k,m-k}_{-}(\tilde{X_0})\right),   & k\neq \frac{m+1}{2},\frac{m-1}{2}\\
          \left(h^{k,m-k}_+(\tilde{X_0})+R,\ h^{k,m-k}_{-}(\tilde{X_0})\right),   & k= \frac{m+1}{2}\textup{ or }\frac{m-1}{2}
        \end{array}\right.$$ for $|q|$ sufficiently small.
    \item If $m$ is even, then the signature of $S(C\bullet,\overline{\bullet})$ on $H^{k,m-k}(X_q)$ is   $$\left\{\begin{array}{cc}
        \left(h^{k,m-k}_+(\tilde{X_0}),\ h^{k,m-k}_{-}(\tilde{X_0})\right),   & k\neq \frac{m}{2} \\
          \left(\hat{h}^{m/2,m/2}_+(\tilde{X_0})+l,\ \hat{h}^{m/2,m/2}_{-}(\tilde{X_0})\right),   & k= \frac{m}{2}
        \end{array}\right.$$ for $|q|$ sufficiently small. 
    \end{enumerate} 
    
    In particular, if $H^m(\tilde{X_0},\C)$ together with the cup product $S(\bullet,\bullet)$ is a polarized Hodge structure, then $H^m(X_q,\C)$ together with the cup product $S(\bullet,\bullet)$ is a polarized Hodge structure as well for $|q|$ sufficiently small.

\end{theorem}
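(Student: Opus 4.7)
Apply Theorem \ref{Corg} to a suitable semistable model of $f$ and then translate the output using the explicit structure of the central fiber.

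\textbf{Step 1 (Semistable reduction).} Near each ODP $x_i$, pick coordinates in which $f$ is $\sum_{j=0}^m z_j^2 = t$. Perform the degree-two base change $t=s^2$: the new total space acquires an $(m+1)$-dimensional ODP above each $x_i$, which we resolve by blowing up. The result is a semistable family $\tilde g:\tilde X\to\Delta$ whose central fiber is $\tilde{X_0}\cup \bigsqcup_i \tilde Q_i$, where $\tilde Q_i$ is a smooth projective $m$-quadric (the exceptional divisor) meeting $\tilde{X_0}$ transversally in $Q_i$, and with no triple intersections. The nearby fibers of $\tilde g$ coincide with those of $f$ after reparametrization, so it suffices to work with $\tilde g$. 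Since $\tilde Q_i$ is projective, $\tilde{X_0}$ satisfies the $\partial\overline\partial$-lemma by hypothesis, and $Q_i$ is projective, the hypotheses of Theorem \ref{Corg} are met.

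\textbf{Step 2 (Situation (\textbf{A}) and the structure of $H^m_{\textup{lim}}$).} Analyze Steenbrink's weight spectral sequence on $H^m_{\textup{lim}}$: its $E_1$ page is built from $H^\bullet(\tilde{X_0}), \bigoplus_i H^\bullet(\tilde Q_i)$, and $\bigoplus_i H^\bullet(Q_i)$, connected by Gysin and restriction morphisms. Using the explicit cohomology of quadrics and describing $N$ as a sum of local Picard-Lefschetz contributions at each ODP, verify that the Jordan structure of $N$ realizes the monodromy weight filtration $W(N,m)$, so Situation (\textbf{A}) holds. When $m$ is odd, $N\ne 0$ and $\textup{Gr}^{W^{St}}_{m+1}H^m_{\textup{lim}}$ is identified with the Tate twist of $\ker\bigl(\bigoplus_i H^{m-1}_{\textup{prim}}(Q_i)\xrightarrow{-\gamma}H^{m+1}(\tilde{X_0})\bigr)$, an $R$-dimensional space pure of Hodge type $((m+1)/2,(m+1)/2)$; the piece $\textup{Gr}^{W^{St}}_{m-1}$ is its $N$-image, of type $((m-1)/2,(m-1)/2)$. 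When $m$ is even, the degree-two base change kills the finite-order part of the monodromy, so $N=0$; purity of $H^m_{\textup{lim}}$ then emerges from the spectral sequence, and the $(m/2,m/2)$-piece of $\textup{Gr}^{W^{St}}_m$ is $V^m\oplus\C^l$, with the extra $\C^l$ corresponding to the vanishing cycle classes.

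\textbf{Step 3 (Signatures and conclusion).} Compute the primitive signatures $s^{p,q}_\pm$ on each graded piece: contributions from $\tilde{X_0}$ yield precisely the Hodge-Riemann data $(h^{k,m-k}_\pm(\tilde{X_0}))$ away from the middle bidegree, while the extra pieces (the $R$-dim space for odd $m$, or the $\C^l$ from vanishing cycles for even $m$) are definite for the Hermitian form of Theorem \ref{Corg} by the Hodge-Riemann bilinear relations on the quadrics and the positivity of the vanishing cycle self-intersection (with signs absorbed by $\epsilon(m)$ and the Weil operator). Plugging these into the formula of Theorem \ref{Corg}(2), the Lefschetz sums $\textbf{S}^{p,k}_\pm$ telescope to $(h^{k,m-k}_\pm(\tilde{X_0}))$ outside the middle bidegree, with the $+R$ (odd $m$) or $+l$ (even $m$) correction appearing at the stated bidegree. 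Theorem \ref{Corg}(1) yields the $\partial\overline\partial$-lemma for $X_q$, and the final ``in particular'' polarization statement follows since all $s^{p,q}_-$ vanish in that case.

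\textbf{Main obstacle.} The heart of the argument is Step 2: because the central fiber does not a priori admit a global K\"ahler class, the Guill\'en-Navarro Aznar criterion is unavailable, and $W^{St}=W(N,m)$ must be verified by hand from the explicit local structure of the ODP degeneration. The even-dimensional case is especially delicate: the base change collapses $N$ to zero, so purity of $H^m_{\textup{lim}}$ must be extracted directly from the $E_1$ page, and the $l$-dimensional vanishing-cycle contribution appears in $\textup{Gr}^{W^{St}}_m$ rather than in an off-middle graded piece, changing both the Hodge-type location and the signature contribution relative to the odd case.
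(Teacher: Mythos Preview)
Your proposal is correct and follows essentially the same route as the paper: build the semistable model by blowing up and double base change (the paper reverses the order, but the resulting central fiber $\tilde{X_0}\cup\bigsqcup_i E_i$ glued along the $Q_i$ is the same), verify $W^{St}=W(N,m)$ by analyzing the Gysin/restriction maps on the $E_1$ page (trivial for $d\neq m$ by Lefschetz hyperplane on the quadrics, and for $d=m$ via the duality between $E_2^{-1,m+1}$ and $E_2^{1,m-1}$ in the odd case or triviality of $H^{m-1}(Q_i)$ in the even case), and then read off the signatures from Fujisawa's formula (Proposition~\ref{Fu2}) exactly as you describe. Your identification of the ``main obstacle'' is accurate, and your description of the even case---where the $l$-dimensional primitive contribution from $\bigoplus_i H^m(E_i)_{\textup{prim}}$ sits in weight $m$ rather than $m\pm 1$---matches the paper's handling.
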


Recently, there have been several new classes of non-K\"ahler Calabi-Yau manifolds (which are not even of Fujiki class $\mathcal{C}$) constructed by the global smoothing of $d$-semistable SNC Calabi-Yau varieties. The critical ingredient is to apply the log deformation theory of the $d$-semistable SNC Calabi-Yau variety developed by Namikawa-Kawamata \cite[Theorem 4.2]{NaKa} (or see \cite[Corollary 7.4]{CLM} for some recent developments). Then, by choosing suitable $d$-semistable SNC Calabi-Yau varieties and applying the theory of Namikawa-Kawamata, Hashimoto-Sano \cite{HaSa} construct non-K\"ahler Calabi-Yau threefolds $X_3(a)$, whose $b_2$ tends to infinity as $a$ tends to infinity. Later, by using the same method, Lee \cite{lee} constructs an example of non-K\"ahler Calabi-Yau fourfold, Sano \cite{Sa} constructs non-K\"ahler Calabi-Yau $m$-folds $X_m(a)$ for all $m\geq 4$, whose $b_2$ tends to infinity as $a$ tends to infinity, and also Sano \cite{Sa2} gives another example of non-K\"ahler Calabi-Yau threefolds with arbitrarily large $b_2$.

 If we apply our Theorem \ref{Corg} to their examples, then we have:

\begin{theorem}[Theorem \ref{HaSa}] \label{Hasa'}
The non-K\"ahler Calabi-Yau threefolds $X_3(a)$ constructed by Hashimoto-Sano \cite[Theorem 1.1]{HaSa} satisfy the $\partial\overline{\partial}$-lemma. Moreover, $H^3(X_3(a),\C)$ together with the cup product $S(\bullet,\bullet)$ is a polarized Hodge structure.
\end{theorem}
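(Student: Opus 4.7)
The plan is to realize $X_3(a)$ as the general fiber of a one-parameter semistable degeneration $f:X\to\Delta$ and apply Theorem \ref{Corg} to this degeneration. By \cite{HaSa}, $X_3(a)$ arises as the Namikawa-Kawamata smoothing of a $d$-semistable SNC Calabi-Yau threefold $X_0$ whose irreducible components and all higher intersections are smooth projective varieties; in particular every stratum of $X_0$ is K\"ahler, so the standing hypothesis of Theorem \ref{Corg} holds automatically. It then suffices to verify condition \textbf{(A)} at every degree $d$ (to obtain the $\partial\overline{\partial}$-lemma on $X_3(a)$ via Theorem \ref{Corg}(1) and the classical equivalence recalled in the introduction) and condition \textbf{(B)} together with the polarization hypothesis $s^{p,r}_-=0$ at $d=m=3$ (to obtain that $H^3(X_3(a),\C)$ with the cup product is polarized via Theorem \ref{Corg}(2)).

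For condition \textbf{(A)}, I would compute Steenbrink's weight spectral sequence ${}_{W^{St}}E_1$ for $X_0$, whose $E_1$-page is assembled from the cohomologies of the strata via Gysin and restriction maps. Using the explicit topology of the Hashimoto-Sano central fiber, one can describe $\Gr^{W^{St}}_{d+l}H^d_{\textup{lim}}$ and the nilpotent operator $N$ concretely in terms of maps between the cohomologies of the components and their double locus. The equality $W^{St}=W(N,d)$ is then equivalent to the $N$-Lefschetz isomorphism $N^l:\Gr^{W^{St}}_{d+l}\xrightarrow{\sim}\Gr^{W^{St}}_{d-l}$ for every $l\geq 0$, which becomes a finite set of cohomological identifications on $X_0$. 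The substantive content is at $d=3$; the low-degree cases are expected to follow by direct inspection using the Calabi-Yau hypothesis (in particular vanishing of $H^1$) and the simply-connected structure of $X_3(a)$.

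For condition \textbf{(B)}, I would invoke Fujisawa's theory (Propositions \ref{Fu} and \ref{Fu2}) to identify the Hermitian form $S(C\bullet,(-N)^l\overline{\bullet})$ on the primitive piece $P_{3+l}\subset\Gr^{W^{St}}_{3+l}H^3_{\textup{lim}}$ with a direct sum of classical Hodge-Riemann forms on the middle cohomologies of appropriate smooth projective strata of $X_0$. Positivity of those forms from the Hodge-Riemann bilinear relations on the K\"ahler strata then gives $s^{p,r}_-=0$ for all $(p,r)$, which is exactly the polarization hypothesis appearing in Theorem \ref{Corg}(2). Applying Theorem \ref{Corg}(1)-(2) simultaneously delivers both conclusions of Theorem \ref{Hasa'}: the $\partial\overline{\partial}$-lemma on $X_3(a)$ and the fact that $H^3(X_3(a),\C)$ with the cup product $S(\bullet,\bullet)$ is a polarized Hodge structure.

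The main obstacle is the $N$-Lefschetz verification in \textbf{(A)} at $d=3$. Because $X_3(a)$ is non-K\"ahler, the uniform conclusion of Guill\'en-Navarro Aznar used in Theorem \ref{kah'} is unavailable, so there is no global K\"ahler class on $X_0$ to invoke, and the $N$-Lefschetz isomorphism must be established by a direct analysis of Steenbrink's weight spectral sequence tailored to the Hashimoto-Sano geometry—specifically by tracking the Gysin maps from the double locus and exploiting the particular way the components fit together in the construction of \cite{HaSa}. This is where the specific features of that construction (and not merely the generalities of semistable degeneration) enter in an essential way.
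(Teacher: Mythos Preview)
Your plan is correct and follows essentially the same route as the paper's proof: one applies Theorem \ref{Corg} to the Namikawa--Kawamata smoothing, observes that the limiting mixed Hodge structure is pure away from $d=3$ (the paper uses the Lefschetz hyperplane theorem rather than the Calabi--Yau vanishing you mention), and at $d=3$ verifies the $N$-Lefschetz isomorphism by a direct computation of ${}_{W^{St}}E_2^{-1,4}$ and ${}_{W^{St}}E_2^{1,2}$ in terms of Gysin and restriction maps along the K3 double locus $S$. The one point worth making more precise in your outline is the polarization step: the paper's key observation is that ${}_{W^{St}}E_2^{-1,4}$ lands inside $H^2(S)_{\mathrm{prim}}(-1)$ with respect to an ample class coming from the normal bundle, so the classical Hodge--Riemann relations on the K3 give $s^{p,r}_-=0$ via Proposition~\ref{Fu2}; and $\mathrm{Gr}^{W^{St}}_3 H^3_{\lim}$ is identified with the $H^1$ of the blown-up curves, which is automatically polarized.
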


\begin{theorem}[Theorem \ref{Sa}]
The non-K\"ahler Calabi-Yau $m$-folds $X_m(a)$ for $m\geq 4$ constructed by Sano \cite[Theorem 1.1]{Sa} satisfy the $\partial\overline{\partial}$-lemma. Moreover, the Hodge index on the middle cohomology $H^m(X_m(a))$ is as follows:
\begin{enumerate}
    \item If $m\equiv 3 \ (\textup{mod}\  4)$, then $H^m(X_m(a))$ together with the cup product $S(\bullet,\bullet)$ is a polarized Hodge structure.
    \item If $m\equiv 1 \ (\textup{mod} \ 4)$, then the signature of $S(C\bullet,\overline{\bullet})$ on $H^{k,m-k}(X_m(a))$ is $$\left\{\begin{array}{cc}
        \left(h^{k,m-k}(X_m(a)),\ 0\right),   & k\neq \frac{m+1}{2},\frac{m-1}{2}\\
          \left(h^{k,m-k}(X_m(a))-9(27a^2-2a+5)-a-6,\ 9(27a^2-2a+5)+a+6\right),   & k= \frac{m+1}{2}\textup{ or }\frac{m-1}{2}
        \end{array}\right. .$$
    \item If $m=4$, then the signature of $S(C\bullet,\overline{\bullet})$ on $H^{k,m-k}(X_m(a))$ is $$\left\{\begin{array}{cc}
        \left(h^{k,m-k}(X_m(a)),\ 0\right),   & k\neq \frac{m}{2} \\
          \left(h^{k,m-k}(X_m(a))-a-1,\ a+1\right),   & k= \frac{m}{2}
        \end{array}\right. .$$
        \item If $m$ is even and $m\geq 6$, then the signature of $S(C\bullet,\overline{\bullet})$ on $H^{k,m-k}(X_m(a))$ is $$\left\{\begin{array}{cc}
        \left(h^{k,m-k}(X_m(a)),\ 0\right),   & k\neq \frac{m}{2} \\
          \left(h^{k,m-k}(X_m(a))-a-2,\ a+2\right),   & k= \frac{m}{2}
        \end{array}\right. .$$ 
\end{enumerate}
\end{theorem}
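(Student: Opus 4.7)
The plan is to apply Theorem \ref{Corg} to a one-parameter semistable degeneration whose general fiber is Sano's $X_m(a)$ and whose central fiber $X_0$ is (a log-resolution of) the $d$-semistable SNC Calabi-Yau variety built in Sano's construction. Since every irreducible component and every $k$-fold intersection of $X_0$ is projective, hence K\"ahler, the running hypothesis of Theorem \ref{Corg} is satisfied. It therefore suffices to verify condition \textbf{(A)} — i.e.\ $W^{St}=W(N,m)$ on $H^m_{\lim}$ — and then to compute the numbers $\textbf{S}_{\pm}^{p,k}$ that feed into Theorem \ref{Corg}(2); the $\partial\overline{\partial}$-lemma and the four tabulated signatures will then drop out.

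For condition \textbf{(A)}, by Remark 1.2 the equality $W^{St}=W(N,m)$ is purely combinatorial, involving only the Gysin and restriction maps on the $E_1$-page of Steenbrink's weight spectral sequence. In Sano's construction the top intersections of $X_0$ take the form of fiber products over $\mathbb{P}^1$ of two Lefschetz pencils with disjoint critical loci, which is exactly the setting of the Lefschetz-type theorem advertised in the abstract. That theorem decomposes the middle cohomology of these fiber products into an "ambient" piece (pulled back from the two factors) and a "vanishing" piece, and it is this decomposition that makes the required rank, kernel, and cokernel computations for the Gysin/restriction maps tractable, so that $E_2$-degeneration and the identification $W^{St}=W(N,m)$ can be checked directly.

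Next, to compute the $s_{\pm}^{p,q}$ on each graded piece $\Gr^{W^{St}}_{m+l}H^m_{\lim}$, I would invoke Fujisawa's theory (Propositions \ref{Fu} and \ref{Fu2}) to reduce the Hodge index computation to signatures on the middle cohomology of the strata of $X_0$. On the purely projective strata the classical Hodge index theorem applies, while on the fiber-product strata the Lefschetz-type theorem again provides a decomposition on which the cup product has known sign behavior, with the vanishing cohomology contributing the non-polarized piece. Plugging these data into the $N$-Lefschetz decomposition of $\Gr^{W^{St}}_{m+l}H^m_{\lim}$ and summing as in Theorem \ref{Corg}(2) then yields $\sum_k \textbf{S}_{\pm}^{p,k}$ as the signature of $S(C\bullet,\overline{\bullet})$ on $H^{p,m-p}(X_m(a))$; the four cases according to $m\bmod 4$ arise from how the sign $\epsilon(m)$ in $S=\epsilon(m)Q$ interacts with the Weil operator on the vanishing cohomology of the fiber products.

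The main obstacle I anticipate is the final bookkeeping: the compact numerical formulas $9(27a^2-2a+5)+a+6$, $a+1$, and $a+2$ must come out of a careful count of the vanishing cohomology of the fiber-product strata together with the extra contributions from Gysin images in the middle cohomology of the components. In particular, the dichotomy between $m\equiv 1$ and $m\equiv 3 \pmod 4$, and between $m=4$ and even $m\geq 6$, reflects a delicate sign competition between the Weil operator on the vanishing pieces and the factor $\epsilon(m)$; tracking these signs uniformly in $m$ and $a$, and checking that all contributions outside the middle Hodge type collapse to $(h^{k,m-k},0)$, is where I expect the bulk of the technical work to lie.
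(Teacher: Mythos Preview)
Your overall strategy is correct and matches the paper's: apply Theorem \ref{Corg} to the semistable family coming from Sano's $d$-semistable SNC variety, verify $W^{St}=W(N,d)$ via the weight spectral sequence, and read off the signatures from Propositions \ref{Fu}--\ref{Fu2} using Theorem \ref{fib}. Two points, however, are mischaracterized and would lead you astray in the actual computation.

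First, the central fiber has only \emph{two} components, $X_1$ (a twice blown-up $\mb{P}^2\times T$) and $X_2=\mb{P}^2\times T$, meeting along a single stratum $D=S\times_{\mb{P}^1}T$. So ${}_{W^{St}}E_1^{-r,d+r}=0$ for $|r|\geq 2$, and checking \textbf{(A)} reduces to showing that the identity map induces ${}_{W^{St}}E_2^{-1,k+1}\cong{}_{W^{St}}E_2^{1,k-1}$ for each $k$. The role of Theorem \ref{fib} is to make $H^{\leq m-1}(S\times_{\mb{P}^1}T)$ explicit as (the low-degree part of) $H^\bullet(S)\otimes_{H^\bullet(\mb{P}^1)}H^\bullet(T)$, so that the Gysin and restriction maps to and from $X_1,X_2$ can be computed by hand; Lemma \ref{gysin} is then needed to handle the blowups in $X_1$. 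The key output is an orthogonal splitting $H^{k-1}(D)=\ker(-\gamma)\oplus\textup{im}(\theta)$ with $\ker(-\gamma)=\langle h,\phi_a^*h,f\rangle^\perp\cup\eta^{(k-3)/2}$ for odd $k<m$.

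Second, the four-way case split is not a matter of $\epsilon(m)$ interacting with the Weil operator. For odd $m$, the negative classes live in $L^{\frac{m-3}{2}}(\langle h,\phi_a^*h,f\rangle^\perp)\subset{}_{W^{St}}E_2^{-1,m+1}$ (seven-dimensional) and in $L^{\frac{m-3}{2}}H^1(\Gamma_a)\oplus\bigoplus_i L^{\frac{m-3}{2}}H^1(F_i)\subset{}_{W^{St}}E_2^{0,m}$ (dimension $g(C_a)+a$ with $g(C_a)=9(27a^2-2a+5)-1$); whether $L^{\frac{m-3}{2}}$ applied to primitive $2$-classes is positive or negative for $S(C\bullet,(-N)^\ell\overline\bullet)$ depends on the parity of $\frac{m-3}{2}$, which is exactly the $m\bmod 4$ dichotomy. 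For even $m$, ${}_{W^{St}}E_2^{-1,m+1}=H^{m-1}_{\textup{prim}}(D)$ is entirely positive, and the negative classes sit in two explicit subspaces $U_4,U_5\subset{}_{W^{St}}E_2^{0,m}$ whose dimensions ($a-1$ and either $2$ or $3$) depend on whether $m=4$ or $m\geq 6$, giving $a+1$ versus $a+2$. So the numbers are genuine geometric counts, not sign bookkeeping.
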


Since the construction of Sano's non-K\"ahler Calabi-Yau $m$-folds relies on the projective Calabi-Yau manifolds of Schoen's type (fibered product type), the difficulty in applying Theorem \ref{Corg} to this case is to understand the cohomology of projective Calabi-Yau manifolds constructed by the fibered product of two anti-canonical Lefschetz fibrations. Throughout the paper, when we refer to a Lefschetz fibration, we mean a Lefschetz fibration that arises from the blowup of the base locus of a Lefschetz pencil, which is contained in an ample linear system. We prove the following Lefschetz-type theorem for the fibered product of two Lefschetz fibrations over $\mb{P}^1$ with disjoint critical locus:

\begin{theorem}[Theorem \ref{fib}]
Let $\tilde{X}_i\xrightarrow{f_i} \mb{P}^1$ $(i=1,2)$ be two Lefschetz fibrations and $\Delta(f_i)\subset \mb{P}^1$ be the critical locus of $f_i$. Write $m_i=\textup{dim}_{\C}\tilde{X}_i$.
Assume that $\Delta(f_1)$ and $\Delta(f_2)$ are disjoint. Then the canonical morphism of graded rings $$H^{\bullet}(\tilde{X}_1,\Q)\otimes_{H^{\bullet}(\mb{P}^1,\Q)}H^{\bullet}(\tilde{X}_1,\Q)\rightarrow H^{\bullet}(\tilde{X}_1\times_{\mathbb{P}^1}\tilde{X}_2,\Q)$$ is an isomorphism in degree $\leq m_1+m_2-2$ and is injective in degree $ m_1+m_2-1$.
\end{theorem}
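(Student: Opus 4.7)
The plan is to apply the BBD decomposition theorem to each Lefschetz fibration, combine it with proper base change for the defining Cartesian square to reduce to a K\"unneth-type statement for sheaves on $\mathbb{P}^1$, and then exploit the disjointness of the critical loci to resolve the essential case.

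First, by the decomposition theorem one has a (non-canonical) isomorphism $Rf_{i*}\Q_{\tilde X_i} \cong \bigoplus_q R^q f_{i*}\Q\,[-q]$ in $D^b(\mathbb{P}^1,\Q)$. Because vanishing cycles for Lefschetz fibrations appear only in middle degree, $R^qf_{i*}\Q$ is the constant sheaf $\underline{H^q(F_i)}$ for $q \neq n_i := m_i - 1$, while the middle sheaf $\mathcal F_i := R^{n_i}f_{i*}\Q$ equals $j_{i,*}L_i$, where $L_i$ is the local system of middle-degree cohomology on $U_i := \mathbb{P}^1 \setminus \Delta(f_i)$ with Picard--Lefschetz monodromy. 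Proper base change and the projection formula applied to the Cartesian square defining $Y = \tilde X_1 \times_{\mathbb{P}^1} \tilde X_2$, together with the flatness of all sheaves of $\Q$-vector spaces, yield $Rg_*\Q_Y \cong Rf_{1*}\Q \otimes Rf_{2*}\Q$ where $g = f_i\circ p_i$. Combining this with the decomposition gives
\[ Rg_*\Q_Y \;\cong\; \bigoplus_{q_1,q_2} R^{q_1}f_{1*}\Q \otimes R^{q_2}f_{2*}\Q\,[-q_1-q_2], \]
and the canonical map $\phi$ of the theorem splits according to whether each $q_i$ equals $n_i$. Whenever at least one $q_i \neq n_i$ the corresponding factor of $H^*(\tilde X_i)$ is a free $H^*(\mathbb{P}^1)$-module and $\phi$ is an isomorphism on that piece in every degree, reducing the problem to the middle-times-middle summand $(q_1,q_2) = (n_1,n_2)$.

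For that summand, after the shift by $n_1+n_2$, the claim becomes that the K\"unneth map $H^p(\mathbb{P}^1,\mathcal F_1)\otimes H^{p'}(\mathbb{P}^1,\mathcal F_2) \to H^{p+p'}(\mathbb{P}^1,\mathcal F_1\otimes \mathcal F_2)$ is an isomorphism for $p+p'=0$ and injective for $p+p'=1$. Because $\omega^2=0$ in $H^*(\mathbb{P}^1)$, the relations $\omega\alpha\otimes\beta = \alpha\otimes\omega\beta$ only act in sheaf-cohomology degree $\geq 2$, so the tensor product over $H^*(\mathbb{P}^1)$ coincides with the ordinary $\Q$-tensor product in degrees $0,1$. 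The disjointness of $\Delta(f_1)$ and $\Delta(f_2)$ gives the key sheaf identification $\mathcal F_1 \otimes \mathcal F_2 \cong j_*(L_1\otimes L_2)$ for $j: U_1\cap U_2\hookrightarrow \mathbb{P}^1$: at $p\in \Delta(f_1)$ the sheaf $\mathcal F_2$ is already a local system with stalk $L_2$ (since $p\in U_2$), so the tensor stalk $L_1^{T_p^{(1)}}\otimes L_2$ matches the local monodromy invariants $(L_1\otimes L_2)^{T_p^{(1)}\otimes\textup{id}}$, and symmetrically at $p\in\Delta(f_2)$. Taking $\pi_1$-invariants then gives the degree-0 isomorphism $H^0(\mathcal F_1\otimes\mathcal F_2) = (L_1\otimes L_2)^{\pi_1(U_1\cap U_2)} = L_1^{\pi_1(U_1)}\otimes L_2^{\pi_1(U_2)} = H^0(\mathcal F_1)\otimes H^0(\mathcal F_2)$, where the middle equality uses that loops around $\Delta(f_1)$ act trivially on $L_2$ (they bound in $U_2$) and symmetrically.

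For the degree-1 injectivity, I would combine the short exact sequences $0 \to j_!L \to j_*L \to \mathcal S \to 0$ (for $L = L_1, L_2, L_1\otimes L_2$, with $j$ the appropriate open inclusion and $\mathcal S$ the skyscraper of monodromy-invariant stalks at the relevant critical values), whose long exact cohomology sequences present each $H^1$ as a cokernel involving compactly supported cohomology of a local system on an open 2-manifold. The disjointness splits the skyscraper contribution to $\mathcal F_1 \otimes \mathcal F_2$ cleanly into a piece indexed by $\Delta(f_1)$ (on which $L_2$ is locally constant) and a piece indexed by $\Delta(f_2)$ (on which $L_1$ is locally constant), and I would use this decoupling together with a diagram chase comparing the three long exact sequences and the natural cup-product maps between them to conclude injectivity. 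The main obstacle I expect is precisely this diagram chase: while the cohomological dimension $\leq 2$ of $\mathbb{P}^1$ keeps everything finite-dimensional and the disjointness ensures a clean product structure at the critical values, verifying injectivity of the cup product on $H^1$ --- rather than merely matching dimensions --- requires careful tracking of boundary maps.
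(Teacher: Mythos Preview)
Your approach via the decomposition theorem and proper base change is genuinely different from the paper's, and conceptually cleaner where it works. The paper proceeds geometrically: it realizes $\tilde{X}_1\times_{\mb{P}^1}\tilde{X}_2$ as a small resolution of an ample hypersurface $W\subset X_1\times X_2$, applies the Lefschetz hyperplane theorem to $W$, and then compares the two discriminant squares via Mayer--Vietoris. Injectivity up to degree $m_1+m_2-1$ and surjectivity up to degree $m_1+m_2-3$ follow from an explicit analysis of the map $H^k(E')\to H^k(E)$ between the exceptional loci, using a concrete formula for cup product with $c_1(f_i^*\mathcal{O}_{\mb{P}^1}(1))$ on the blowup. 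Surjectivity in the critical degree $m_1+m_2-2$ is then obtained by a direct Leray dimension count for both $\tilde{X}_i\to\mb{P}^1$ and $\tilde{X}_1\times_{\mb{P}^1}\tilde{X}_2\to\mb{P}^1$, with separate cases depending on whether each $H^{m_i-1}_{\textup{van}}(Y_i)$ vanishes. Your route avoids the hypersurface $W$ entirely and would, if completed, replace the long dimension computation by a short sheaf argument on $\mb{P}^1$.

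The reduction to the summand $(q_1,q_2)=(n_1,n_2)$ is sound: the relative K\"unneth map $H^*(\mb{P}^1,A)\otimes_{H^*(\mb{P}^1)}H^*(\mb{P}^1,B)\to H^*(\mb{P}^1,A\otimes B)$ is natural in $A,B$ and therefore respects any derived-category splitting, and the non-middle summands are handled by freeness as you say. Your degree-$0$ argument via $\pi_1$-invariants is correct. However, the degree-$1$ injectivity --- which you rightly flag as the crux --- is left as an unperformed diagram chase, and this is a genuine gap: comparing the three sequences $0\to j_!L\to j_*L\to\mathcal S\to 0$ gives relations among Euler characteristics, but extracting \emph{injectivity} of the cup product requires an additional input (for instance, semisimplicity of the $L_i$, so that the invariants split off as direct summands of the local systems, making $H^0(\mathcal F_i)\otimes H^1(\mathcal F_j)\to H^1(\mathcal F_1\otimes\mathcal F_2)$ a direct summand inclusion). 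You should also note that your standing assumption $\mathcal F_i=j_{i,*}L_i$ presupposes $H^{m_i-1}_{\textup{van}}(Y_i)\neq 0$; when some vanishing cohomology is zero the non-constant sheaf appears in degree $m_i$ with skyscraper summands, and the bookkeeping changes (the paper treats this case separately). The paper sidesteps the degree-$1$ cup-product issue entirely by obtaining injectivity from the hypersurface picture and then matching dimensions.
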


A brief description of the contents is as follows: in Section $2$, we first recall some basic results of the degeneration of abstract variation of Hodge structure and prove an abstract version of Theorem \ref{Corg} by assuming two technical estimates on nilpotent orbits. In Section $3$, we review Steenbrink's theory and Fujisawa's theory on limiting mixed Hodge structures and show that Theorem \ref{Corg} follows from the abstract version we prove in Section $2$. In Section $4$, we apply Theorem \ref{Corg} to several examples, including the global smoothings of ODPs, the global smoothings of Calabi-Yau threefolds with an $O_{16}$ singularity, Hashimoto-Sano's non-K\"ahler Calabi-Yau threefolds \cite{HaSa}, Sano's non-K\"ahler Calabi-Yau $m$-folds \cite{Sa}. In Section $5$, we prove the Lefschetz-type theorem for the fibered product of two Lefschetz fibrations over $\mb{P}^1$ with disjoint critical locus.  Finally, in Section $6$, we derive the unpolarized and generalized version of the converse of nilpotent orbits theorem that we use in the proof of Theorem \ref{Corg}.

\begin{remark}
    Theorem \ref{Hasa'} and some special cases of Theorem \ref{Corg} and Theorem \ref{kah'} for the threefold degenerations with $N^2=0$ have also been obtained by Tsung-Ju Lee \cite{TJ} independently, where the finite distance degenerations of non-K\"ahler Calabi-Yau threefolds that satisfy the $\partial\overline{\partial}$-lemma are studied. 

\end{remark}

\textbf{Acknowledgments.}
The author would like to thank Hung Jiang, Po-Sheng Wu, Alex Xu, and Professor Taro Fujisawa for inspiring discussions. In the communication with Professor Chin-Lung Wang, the author was informed that Tsung-Ju Lee also obtained similar results in this direction independently. After communicating with Tsung-Ju Lee, the author wants to thank him for his suggestions on the earlier version of the article. Lastly, the author wants to thank Professor Robert Friedman for his numerous suggestions, explanations, clarifications, and the time he spent editing this paper.

\textbf{Notation:}
For every $a\in \mb{Z}$, let $\epsilon(a):=\frac{a(a-1)}{2}.$ We have $$\epsilon(a+1)=(-1)^a \epsilon(a)=\epsilon(-a), \ \ \ \epsilon(a+2)=-\epsilon(a),$$
$$\epsilon(a+b)=(-1)^{ab}\epsilon(a)\epsilon(b).$$

\section{Abstract Variation of Hodge Filtration}

Let $\mathcal{H}$ be a holomorphic vector bundle on the punctured disc $\Delta^*$ with an integrable connection $\nabla$. Consider the canonical extension $\Tilde{\mathcal{H}}$ of $(\mathcal{H},\nabla)$ of $\mathcal{H}$ to the unit disc $\Delta$ so that $\nabla$ extends to a regular-singular connection on $\Delta$. Let $T$ be the monodromy automorphism of the local system of horizontal sections $\mathcal{H}^{\nabla}$ determined by a counter-clockwise loop around $0$, i.e. at each stalk of $\mathcal{H}^{\nabla}$ over $\Delta^*$, $T$ is obtained by the inverse of the pullback along the counter-clockwise loop by parallel translation. The convention for $T$ here is compatible with the convention for the Picard-Lefschetz monodromy $T$ in the next section.  Assume that $T$ is unipotent and let $N=-\text{log}(T)$. Then by \cite{deligne}, we can write $\Tilde{\mathcal{H}}=H\otimes \mathcal{O}_{\Delta}$, where $H$ is the $\C$-vector space of multivalued horizontal sections of $\mathcal{H}$, and the fiber $\Tilde{\mathcal{H}}_0$ of $\Tilde{\mathcal{H}}$ over $0$ is identified with $H$ up to the action of the unipotent group $\{\textup{exp}(\lambda N) : \lambda \in \C\}$. The fiber of $\mathcal{H}$ at any point is identified with $H$ modulo the action of $\{ \textup{exp}(kN): k \in \Z\}$.  The connection $\nabla$ is associated to the connection $1$-form $\frac{N}{2\pi\sqrt{-1}}\frac{dq}{q}$, where $q$ is the coordinate of $\Delta^*.$ 

Write $$q=e^{2\pi\sqrt{-1}z},\  z=\frac{\text{log} q}{2\pi \sqrt{-1}},$$ where $z$ is the uniformizer of $\Delta^*$ and $\varphi: \mathfrak{H}\rightarrow \Delta^*$ the corresponding uniformization. A holomorphic section $v$ of $\mathcal{H}$, viewed as a holomorphic section of the trivial bundle $\varphi^*\mathcal{H}=H\otimes \mathcal{O}_{\mathfrak{H}}$ with the invariance property $v(z-1) = Tv(z)$, extends to a holomorphic section of $\tilde{\mathcal{H}}$ if and only if the section $\textup{exp}(-zN)\varphi^*v$, viewed as a holomorphic
section of $\Tilde{\mathcal{H}}$, extends to a (single-valued) holomorphic function from $\Delta$ to $H$.

We first recall some definitions in the context of the degeneration of Hodge structures.

\begin{definition}\label{Def}
Let $H$ be a finite-dimensional complex vector space, and $N$ a nilpotent endomorphism of $H$ defined over $\R,$
\begin{enumerate}
    \item The monodromy weight filtration of $N$ centered at $d$ is the unique increasing filtration $W=W(N,d)$ of $H$ such that $N(W_i)\subset W_{i-2}$ for $i\geq 2$ and the map $$N^l: \textup{Gr}^W_{d+l}H\rightarrow \textup{Gr}^W_{d-l}H$$ is an isomorphism for all $l$.
    \item (\cite[Definition 2.26]{CKS})  Suppose we are given a non-degenerate bilinear form $S$ on $H$ defined over $\R$ which is $(-1)^d$-symmetric: $S(v,u)=(-1)^dS(u,v)$, and such that $N$ is an infinitesimal isometry: $S(u,Nv)+S(Nu,v)=0$. Then a mixed Hodge structure $(H,W,F)$ on $H$ is polarized by $N$ centered at $d$ if 
    \begin{enumerate}
        \item $N^{d+1}=0$;
        \item $W=W(N,d)$;
        \item $S(F^p,F^{d-p+1})=0$;
        \item $NF^p\subset F^{p-1}$;
        \item the Hodge structure (of weight $d+l$) on the primitive part $$P_{d+l} = \text{ker}(N^{l+1}:\text{Gr}^{W}_{d+l}H\rightarrow \text{Gr}^{W}_{d-l-2}H)$$ is polarized by the form $S(\bullet, N^l\bullet)$.
    \end{enumerate}
\end{enumerate}
\end{definition}

A well-known consequence of Schmid's nilpotent orbit theorem and $SL_2$ orbit theorem (cf. \cite{schmid}) is that if $\mathcal{H}$ underlies an integral polarized variation of Hodge structure of weight $d$, then Hodge bundles $\mathcal{F}^p$ extends to holomorphic subbundles $\Tilde{\mathcal{F}}^p$ of $\Tilde{\mathcal{H}}$ so that the triple $(\Tilde{\mathcal{H}}_0, W(N,d), \Tilde{\mathcal{F}}_0)$ is a polarized mixed Hodge structure, where $W(N,d)$ is the monodromy weight filtration of $N$ centered at $d$.

This section aims to deal with the converse situation of Schmid's theorem. We are interested in the following two situations:

\begin{enumerate}
    \item[(\textbf{A$'$})] We are given a mixed Hodge structure $(H, W, F)$ defined over $\R$ and a nilpotent endomorphism $N$ on $H$ such that \begin{enumerate}
        \item $N$ is a morphism of mixed Hodge structure of type $(-1,-1)$ 
        \item $W=W(N,d)$ is the weight filtration of $N$ centered at $d$.
    \end{enumerate}
    \item[(\textbf{B$'$})] Besides the assumption in (\textbf{A$'$}), we are  given a non-degenerate bilinear form $S$ on $H$ defined over $\R$ which satisfies \begin{enumerate}
    \item $S(v,u)=(-1)^dS(u,v)$,
    \item  $S(u,Nv)+S(Nu,v)=0$,
    \item $S(F^{p},F^{d-p+1})=0$ for all $p$.
\end{enumerate}
\end{enumerate}
 
 In the situation (\textbf{A$'$}), since $W=W(N,d)$, we have $$\textup{dim}_{\C}\textup{Gr}_{F}^p H=\textup{dim}_{\C}\textup{Gr}_{F}^{d-p}H,$$ and hence $$\textup{dim}F^p+\textup{dim}\overline{F}^{d+1-p}=\textup{dim}H$$ for all $p$. Also, in the situation (\textbf{B$'$}), we have $S(W_aH, W_bH)=0$ for $a+b\leq 2d-1$. Thus, $S(\bullet, N^l\bullet)$ defines a non-degenerate pairing on $\textup{Gr}^{W}_{d+l}H$. Moreover, by the first Hodge-Riemann condition on $F$,  $S(\bullet, N^l\overline{\bullet})$ is non-degenerate on the $(p,d+l-p)$-component of the Hodge structure on $\textup{Gr}^{W}_{d+l}H$ (of weight $d+l$) for all $p$ and $l$. Note that $S(\bullet, N^l\overline{\bullet})$ is also non-degenerate on the $(p,d+l-p)$-component of the Hodge structure (of weight $d+l$) on the primitive part $$P_{d+l} := \textup{ker}(N^{l+1}:\textup{Gr}^{W}_{d+l}H\rightarrow \textup{Gr}^{W}_{d-l-2}H)$$ for all $p$ and $l$. Let $(s_{+}^{p,d+l-p},s_{-}^{p,d+l-p})$ denote the signature of $S(C\bullet, N^l\overline{\bullet})$ on the $(p,d+l-p)$-component of $P_{d+l},$ where $C$ is the Weil operator defined by $(\sqrt{-1})^{p-q}$ on the $(p,q)$-component of a Hodge structure of weight $p+q$. Since $S(u,Nv)+S(Nu,v)=0$, the signature of $S(C\bullet, N^l\overline{\bullet})$ on the $(p,d+l-p)$-component of the Hodge structure on $\textup{Gr}^{W}_{d+l}H$ is $$\left(\sum_{r\geq 0 }s_{+}^{p+2r,d+l-p+2r}+\sum_{r\geq 0 }s_{-}^{p+2r+1,d+l-p+2r+1}, \ \sum_{ r\geq 0} s_{-}^{p+2r,d+l-p+2r}+ \sum_{r\geq 0 }s_{+}^{p+2r+1,d+l-p+2r+1}\right).$$

Given an integer $n$, set $n_{+}=\textup{max}\{n,0\}$. In the situation (\textbf{B$'$}), for $0\leq p\leq d+l$, define the invariants $$\textbf{S}_{+}^{p,m+l-p}=\sum_{r\geq (-l)_{+}}s_{+}^{p+r,d+l-p+r}$$ and $$\textbf{S}_{-}^{p,d+l-p}=\sum_{r\geq (-l)_{+}}s_{-}^{p+r,d+l-p+r},$$ which corresponds to (the $N$-Lefschetz decomposition of) the $(p,d+l-p)$-component of the Hodge structure on $\textup{Gr}^{W}_{d+l}H$.

Our main theorem is:

\begin{theorem}\label{main}
  Suppose we are given a decreasing filtration $\mathcal{F}^{\bullet}$ of holomorphic subbundles of $\mathcal{H}$. Let $\tilde{\mathcal{F}}^{\bullet}$ be the corresponding holomorphic subbundles of $\Tilde{\mathcal{H}}|_{\Delta^*}$ under canonical extension. We suppose that $\tilde{\mathcal{F}}^{\bullet}$ extend to holomorphic subbundles of $\Tilde{\mathcal{H}}$, still denoted by $\tilde{\mathcal{F}}^{\bullet}$. Consider the fiber $\tilde{\mathcal{F}}^{\bullet}_0$ of $\tilde{\mathcal{F}}^{\bullet}$ over $0$.
 \begin{enumerate}
 
 \item If the triple $(\Tilde{\mathcal{H}}_0,W(N,d),\tilde{\mathcal{F}}_0)$ satisfies the situation (\textbf{A$'$}), then the fiber $\mathcal{F}_q^{\bullet}$ of $\mathcal{F}^{\bullet}$ is $d$-opposed to its complex conjugate for $|q|$ small. In other words, $\mathcal{F}^{\bullet}_q$ induces a pure Hodge structure on $\mathcal{H}_q$ of weight $d$ for $|q|$ small.
     \item Suppose moreover that we are given a non-degenerate bilinear form $S$ on $H$ defined over $\R$ which makes $(\Tilde{\mathcal{H}}_0,W(N,d),\tilde{\mathcal{F}}_0)$ satisfy the situation (\textbf{B$'$}). We also replace the assumption (c) of the situation (\textbf{B$'$}) by a stronger first Hodege-Riemann bilinear relation: $S(\mathcal{F}^{p},\mathcal{F}^{d-p+1})=0$ for all $p$. 
     
     Then the signature of $S(C\bullet, \overline{\bullet})$ on the $(p,d-p)$-component of the Hodge structure $\mathcal{H}_q$ becomes $$\left(\sum_{ k=0}^d \textup{\textbf{S}}_{+}^{p,k},\sum_{ k=0}^d \textup{\textbf{S}}_{-}^{p,k}\right)=\left(\sum_{\substack{l\geq p-d, \\ r\geq(-l)_{+}} }s_{+}^{p+r,d+l-p+r}, \ \sum_{\substack{l\geq p-d, \\ r\geq(-l)_{+}} }s_{-}^{p+r,d+l-p+r}\right)$$ for $|q|$ sufficiently small.
     
     In particular, if $(\Tilde{\mathcal{H}}_0,W(N,d),\tilde{\mathcal{F}}_0)$ is a polarized mixed Hodge structure centered at $d$, (i.e. $s_{-}^{p,d+l-p}=0$ for all $p$ and $l$), then $\mathcal{F}_q$ induces a polarized Hodge structure of weight $d$ on $\mathcal{H}_q$ for $|q|$ small.
 \end{enumerate}
\end{theorem}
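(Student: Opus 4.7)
The plan is to study $\mathcal{F}^{\bullet}_q$ as a perturbation of the nilpotent orbit associated with $(N, F_{\lim})$. Write $z = \log(q)/(2\pi\sqrt{-1})$ and set $\hat F^p(q) := \exp(-zN)\,\mathcal{F}^p_q$. By the canonical extension criterion recalled just before the theorem statement, $\hat F^p(q)$ is single-valued, extends holomorphically to $\Delta$, and satisfies $\hat F^p(0) = \tilde{\mathcal{F}}^p_0 = F_{\lim}^p$. Hence $\mathcal{F}^p_q = \exp(zN)\,\hat F^p(q)$ is a holomorphic perturbation of the nilpotent orbit $F_{\textup{nil}}^p(q) := \exp(zN)\,F_{\lim}^p$, with $\hat F^p(q) - F_{\lim}^p = O(q)$. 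The real work will be to balance this $O(q)$ correction against the (polynomial in $-\log|q|$) growth of $\exp(zN)$ as $q \to 0$.

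\textbf{Part (1).} The assumption that $(H, W(N,d), F_{\lim})$ is a mixed Hodge structure with $W = W(N,d)$ forces the dimension identity $\dim F_{\lim}^p + \dim \overline{F_{\lim}^{d-p+1}} = \dim H$, and this identity persists in $q$ by local constancy of ranks of Hodge bundles. So $d$-opposedness reduces to the transversality $\mathcal{F}^p_q \cap \overline{\mathcal{F}^{d-p+1}_q} = 0$. I would first prove this for the pure nilpotent orbit $F_{\textup{nil}}^p(q)$: under only situation $(\textbf{A}')$, this is a generalized, unpolarized converse of Schmid's nilpotent orbit theorem, which I will establish in Section $6$ as the first of two technical estimates. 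The transversality then transfers from the nilpotent orbit to $\mathcal{F}^p_q$ via the second technical estimate of Section $6$, which controls the size of the perturbation $\hat F^p(q) - F_{\lim}^p$ in terms of Deligne's splitting of $H$ attached to $(W, F_{\lim})$.

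\textbf{Part (2).} By part (1) and the strengthened first Hodge-Riemann relation $S(\mathcal{F}^p, \mathcal{F}^{d-p+1}) = 0$, the Hermitian form $S(C\bullet, \overline{\bullet})$ is nondegenerate on each $H^{p,d-p}(q) := \mathcal{F}^p_q \cap \overline{\mathcal{F}^{d-p}_q}$; its signature is locally constant in $q$, so I may compute it in the limit $q \to 0$ along the nilpotent orbit. Passing to $\textup{Gr}^W$ via Deligne's bigrading of $(H, W, F_{\lim})$ identifies the $(p,d-p)$-piece of the nilpotent-orbit Hodge decomposition with a direct sum, over $l \geq p-d$ and $r \geq (-l)_{+}$, of the $(p+r,\, d-p+r+l)$-Hodge components of the primitive parts $P_{d+l+2r} \subset \textup{Gr}^W_{d+l+2r}H$. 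On each such summand, unwinding the action of $\exp(zN)$ shows that $S(C\bullet, \overline{\bullet})$ coincides, up to a positive scalar, with the polarizing form $S(C\bullet, (-N)^{l+2r}\overline{\bullet})$, whose signature is $(s_+^{p+r, d-p+r+l},\, s_-^{p+r, d-p+r+l})$ by the definition recorded in situation $(\textbf{B}')$. Summing over $l$ and $r$ gives exactly the claimed formula, and the polarized mixed Hodge structure statement is the immediate specialization $s_-^{\bullet,\bullet} = 0$.

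\textbf{Main obstacle.} The substantive content lies entirely in the perturbation step. The operator $\exp(zN)$ grows polynomially in $-\log|q|$, while the correction $\hat F^p(q) - F_{\lim}^p$ vanishes only to first order in $q$; proving that this perturbation cannot destroy either the $d$-opposedness of part (1) or the signature computation of part (2) requires sharp estimates for how $\exp(zN)$ acts relative to Deligne's bigrading, matched against an estimate bounding the motion of $\hat F^p(q)$ in each bigraded direction. These are precisely the two technical estimates deferred to Section $6$, and once they are in hand, parts (1) and (2) follow from the bookkeeping sketched above.
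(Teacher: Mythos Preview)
Your overall strategy matches the paper's: write $\mathcal{F}^p_q = \exp(zN)\hat F^p(q)$ with $\hat F^p(q)\to F_{\lim}^p$ holomorphically, prove the result first for the nilpotent orbit $\exp(zN)F_{\lim}^\bullet$ (deferred to Section~6 as Theorems~\ref{thmcks2} and~\ref{thmcks}), and then transfer. Two points, however, differ from the paper in ways that matter.

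\textbf{The perturbation step is easier than you indicate.} The correction $\hat F^p(q)-F_{\lim}^p$ is $O(q)=O(e^{-2\pi t})$ with $t=\textup{Im}(z)$, i.e.\ exponentially small in $t$, while every quantity built from $\exp(zN)$ is polynomial in $t$. No ``estimate bounding the motion of $\hat F^p(q)$ in each bigraded direction'' is needed; the paper simply picks a holomorphic frame $v_i^k(q)$ of $\tilde{\mathcal F}^k$ and observes $e^{zN}v_i^k(q)=e^{zN}v_i^k+O(t^de^{-2\pi t})$. The top wedge testing $d$-opposedness is then the nilpotent-orbit wedge plus an exponentially small error; since the former is a polynomial in $(a,t)$ bounded below by $C t^L$ for large $t$ (this is Theorem~\ref{thmcks2}), the sum is nonzero. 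All of the Deligne-splitting work you describe lives entirely inside the Section~6 proofs, not in the transfer.

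\textbf{Your Part (2) transfer has a gap.} The sentence ``its signature is locally constant in $q$, so I may compute it in the limit $q\to 0$ along the nilpotent orbit'' does not stand on its own: at $q=0$ the Hodge decomposition degenerates, and for $q\neq 0$ the nilpotent orbit and $\mathcal F^\bullet_q$ cut out \emph{different} subspaces of $H$, so local constancy in $q$ does not by itself compare them. The paper supplies the missing link via Theorem~\ref{thmcks}(1): a refinement of $F_{\lim}^\bullet$ to a complete flag $\hat F^\bullet$ such that $(\sqrt{-1})^dS(\bullet,\overline{\bullet})$ is nondegenerate on every $\exp(zN)\hat F^k$. In this basis the Hermitian matrix on $\mathcal F^k_q$ equals the nilpotent-orbit matrix plus $O(t^{2d}e^{-2\pi t})$, and since all leading principal minors of the nilpotent-orbit matrix are nonzero polynomials in $t$, a block LDU diagonalization (the paper's Lemma~\ref{linear}) shows the successive quotients $M_{l+1}/M_l$ retain the same signs. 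This is what transfers the signature; your description of the primitive decomposition is the content of Theorem~\ref{thmcks}(2), i.e.\ the computation \emph{for} the nilpotent orbit, not the comparison with $\mathcal F^\bullet_q$.
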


\begin{remark}\ 
\begin{enumerate}
    \item The condition that $S(\mathcal{F}^{p},\mathcal{F}^{d-p+1})=0$ for all $p$ implies the assumption (c) in the situation (\textbf{B$'$}) since $$S(\textup{exp}(zN)v,\textup{exp}(zN)v')=S(v,v')$$ for all vectors $v$ and $v'$ in $H$. 
\item In the theorem, it is surprising that we do not assume that the filtration $\mathcal{F}^{\bullet}$ is horizontal: $\nabla_q\mathcal{F}^p\subset \mathcal{F}^{p-1}$, while it is a natural condition in the geometric setting.
\end{enumerate}
\end{remark}

The idea to prove Theorem \ref{main} is to compare our Hodge filtration $\mathcal{F}_q$ with the "nilpotent orbit." To be precise, the following two theorems are the unpolarized version and the generalized version of the converse of nilpotent orbit theorem of Cattani-Kaplan-Schmid (\cite{CKS}, Corollary 3.13), which will serve as the leading terms of the estimates on $\mathcal{F}_q$ in the proof of our main theorem. We will prove the following two theorems in the last section of the paper.

\begin{theorem}\label{thmcks2}
For each mixed Hodge structure $(H, W, F)$ in the situation (\textbf{A$'$}), the filtration $exp(zN)F^{\bullet}$ is $d$-opposed to its complex conjugate for $\textup{Im}(z)$ sufficiently large.   
\end{theorem}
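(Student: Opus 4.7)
The plan is to extend the strategy of Cattani-Kaplan-Schmid (\cite{CKS}, Corollary 3.13) to the unpolarized setting \textbf{(A$'$)} by combining Deligne's canonical $\R$-splitting of the mixed Hodge structure with the $\mathfrak{sl}_2$-action arising from the monodromy weight filtration. First, from $(H, W, F)$ I extract the Deligne bigrading $H = \bigoplus_{p,q} I^{p,q}$ and the semisimple grading operator $Y$ acting as $(p+q-d)\cdot\textup{id}$ on $I^{p,q}$. Since $W = W(N, d)$ and $N$ is of type $(-1,-1)$, one has $[Y, N] = -2N$, and Jacobson-Morozov completes $(N, Y)$ to an $\mathfrak{sl}_2$-triple $(N, Y, N^+)$. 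By Deligne's theorem, there exists a unique real operator $\delta \in \bigoplus_{r, s \geq 1} \textup{End}^{-r,-s}(H)$ such that $\hat F := \exp(-i\delta) F$ underlies an $\R$-split MHS $(H, W, \hat F)$ with bigrading $\hat I^{p,q}$ satisfying $\overline{\hat I^{p,q}} = \hat I^{q,p}$ and grading operator $\hat Y$ defined over $\R$.

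The key computational step is to rescale by $\hat h(y) := \exp((\log y) \hat Y / 2)$, which is real and preserves both $\hat F$ and $\overline{\hat F}$. The identities $\hat h(y)^{-1} N \hat h(y) = y N$ and $\hat h(y) \delta^{(-r,-s)} \hat h(y)^{-1} = y^{-(r+s)/2} \delta^{(-r,-s)}$ (together with $[N, \delta] = 0$, which follows from the functoriality of Deligne's $\delta$) yield the identity
\begin{equation*}
\exp(iyN) F^p \cap \exp(-iyN) \overline{F}^{d-p+1} = \hat h(y)^{-1} \bigl( \exp(i\tilde\delta(y)) \exp(iN) \hat F^p \cap \exp(-i\tilde\delta(y)) \exp(-iN) \overline{\hat F}^{d-p+1} \bigr),
\end{equation*}
where $\tilde\delta(y) = \hat h(y) \delta \hat h(y)^{-1} \to 0$ as $y \to \infty$. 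At the formal limit $\tilde\delta(y) = 0$, the right-hand side reduces to a direct $\mathfrak{sl}_2$-calculation: after decomposing $H$ into $\mathfrak{sl}_2$-irreducibles, non-self-conjugate pairs $V, \overline V$ contribute zero automatically since $V \cap \overline V = 0$, while for a self-conjugate irreducible $V$ of highest weight $\lambda$ (necessarily of primitive Hodge type $(p_0, p_0)$, $p_0 = (d+\lambda)/2$), the intersection in a standard basis $e_0, \ldots, e_\lambda$ with $N e_k = e_{k+1}$ reduces to the kernel of an $(a+1) \times (a+1)$ matrix with entries $(2iy)^{m-k}/(m-k)!$ (where $a, b'$ satisfy $a + b' = \lambda - 1$), whose determinant factors as a nonzero rational constant times $(2iy)^{(a+1)(b'+1)}$ and hence is nonzero for $y \neq 0$.

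Openness of $d$-opposition in the space of filtrations then propagates the limiting vanishing to a genuine $d$-opposition for $y$ sufficiently large. The extension from $z = iy$ to general $z = x + iy$ is immediate: since $N$ is real, $\exp(xN)$ commutes with complex conjugation, so $\exp(zN) F^p \cap \exp(\bar z N) \overline F^{d-p+1} = \exp(xN)\bigl(\exp(iyN) F^p \cap \exp(-iyN) \overline F^{d-p+1}\bigr)$, and $d$-opposition depends only on $\textup{Im}(z)$. The main obstacle I anticipate is the verification that $[N, \delta] = 0$, which requires a careful analysis of the Deligne bidegree of $N$ and of the characterization of $\delta$ via the comparison between $I^{p,q}$ and $\overline{I^{q,p}}$; once this compatibility is in place, both the $\mathfrak{sl}_2$-computation in the $\R$-split case and the openness argument are routine.
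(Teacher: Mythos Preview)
Your approach is essentially the Cattani--Kaplan--Schmid strategy adapted to the unpolarized setting, which the paper explicitly acknowledges (in the remark following Theorem~\ref{thmcks}) as a valid alternative route to Theorem~\ref{thmcks2}. The paper instead gives a direct combinatorial proof: it chooses bases of the primitive pieces $I^{p,q}_{\textup{prim}}$, computes the top wedge of the bases of $e^{zN}F^k$ and $\overline{e^{zN}F^{d-k+1}}$ explicitly via a Young-tableaux determinant identity (Lemma~\ref{lmac}), and handles the non-$\R$-split remainder terms $s_i^{q,p}\in W_{p+q-2}$ by an elementary weight-versus-degree estimate $\tau+2\sigma\leq -1$ showing they contribute at strictly lower order in $t=\textup{Im}(z)$. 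Your $\delta$-rescaling argument replaces this order bookkeeping with the single algebraic fact $[N,\delta]=0$ (which, as you note, follows from the functoriality of Deligne's splitting and is standard) together with openness of $d$-opposition; in exchange, the paper's explicit approach yields the precise leading asymptotics (cf.\ Remark~\ref{rmk'}) that feed directly into the proof of Theorem~\ref{main}.

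One correction to your $\R$-split analysis: the claim that a non-self-conjugate pair $V,\overline V$ ``contributes zero automatically since $V\cap\overline V=0$'' is not right. Both $e^{iyN}\hat F^p$ and $e^{-iyN}\overline{\hat F^{d-p+1}}$ restrict nontrivially to \emph{each} irreducible (not only to one member of the pair), so opposition must be checked on every $V$ separately. For instance, if $V$ has primitive Hodge type $(p_0,q_0)$ with $p_0+q_0=d+1$ and $p_0\neq q_0$, then for $k=p_0$ one finds $\hat F^{p_0}|_V=\overline{\hat F^{d-p_0+1}}|_V=\C v$, and $e^{iyN}v$, $e^{-iyN}v$ are independent only for $y\neq 0$. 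The fix is immediate: the determinantal computation you describe for the self-conjugate case applies verbatim to every irreducible, and this is precisely the content of the paper's Lemma~\ref{lmac}.
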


\begin{theorem} \label{thmcks}
For each mixed Hodge structure $(H, W, F)$ and non-degenerate bilinear form $S$ on $H$ in the situation (\textbf{B$'$}), 
\begin{enumerate}
    \item The Hodge filtration $F^{\bullet}$ can be refined to a filtration $\hat{F}^{\bullet}$ such that \begin{enumerate}
        \item $\textup{dim}_{\C}\textup{Gr}_{\hat{F}}^kH=1$ for all $k$,
        \item The restriction of the Hermitian form $(\sqrt{-1})^{d}S(\bullet,\overline{\bullet})$ on $\textup{exp}(zN)\hat{F}^k$ is non-degenerate for  $\text{Im}(z)$ sufficiently large and for all $k$.
    \end{enumerate}
        
\item The signature of the Hermitian form $S(C\bullet, \overline{\bullet})$ on $\textup{exp}(zN)F^p\bigcap \overline{\textup{exp}(zN)F^{d-p}}$ becomes $$\left(\sum_{ k=0}^d \textup{\textbf{S}}_{+}^{p,k},\sum_{ k=0}^d \textup{\textbf{S}}_{-}^{p,k}\right)=\left(\sum_{\substack{l\geq p-d, \\ r\geq(-l)_{+}} }s_{+}^{p+r,d+l-p+r}, \ \sum_{\substack{l\geq p-d, \\ r\geq(-l)_{+}} }s_{-}^{p+r,d+l-p+r}\right)$$ for $\textup{Im}(z)$ sufficiently large.
     
In particular, if $(H,W,F)$ is a polarized mixed Hodge structure centered at $d$, (i.e. $s_{-}^{p,d+l-p}=0$ for all $p$ and $l$), then $exp(zN)F^{\bullet}$ is a nilpotent orbit.

\end{enumerate}
\end{theorem}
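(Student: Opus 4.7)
The plan is to work with Deligne's canonical bigrading $H = \bigoplus_{p,q} I^{p,q}$ of the mixed Hodge structure $(W,F)$, refined by the Lefschetz decomposition coming from the hypothesis $W = W(N,d)$. For $p+q \geq d$ set $P^{p,q} := I^{p,q} \cap \ker N^{p+q-d+1}$; this gives a splitting $H = \bigoplus N^r P^{p,q}$ with $0 \leq r \leq p+q-d$, and on each primitive piece $P^{p,q} \subset P_{p+q}$ the Hermitian form $(\sqrt{-1})^{p+q} S(\bullet, N^{p+q-d} \bar\bullet)$ is non-degenerate of signature $(s^{p,q}_+, s^{p,q}_-)$. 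Choose bases $\{e_\alpha\}$ diagonalizing each such form and the resulting Lefschetz basis $\{N^r e_\alpha\}$ of $H$. Because $N$ is an $\R$-infinitesimal isometry of $S$, one has
$$S\bigl(\exp(zN)u, \overline{\exp(zN)v}\bigr) = S\bigl(u, \exp(-2iyN)\bar v\bigr), \qquad y = \textup{Im}(z).$$
Plugging in Lefschetz basis vectors, the Hodge--Riemann orthogonality $S(W_a, W_b)=0$ for $a+b<2d$ and the vanishing $N^{l+1}|_{P_{d+l}}=0$ collapse the exponential series to a single term; the resulting Gram matrix of $(\sqrt{-1})^d S\bigl(\exp(zN)\bullet, \overline{\exp(zN)\bullet}\bigr)$ is block-diagonal over $\alpha$ (after orthogonal diagonalization within each $P^{p,q}$), with each $\alpha$-block an explicit $(l_\alpha+1) \times (l_\alpha+1)$ matrix $M_{l_\alpha}$ supported on $\{r+s \leq l_\alpha\}$ and given by $M_l[r,s] = \pm (\sqrt{-1})^{-l}\frac{(-1)^r(-2iy)^{l-r-s}}{(l-r-s)!}$.

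For part (1), order the basis $\{N^r e_\alpha\}$ by decreasing Hodge-bidegree component $p-r$ (and any secondary ordering of the primitive bases), and define $\hat F^k$ as the span of the first $k$ basis vectors. Since $N^r e_\alpha \in I^{p-r,q-r} \subset F^{p-r}$, this refines $F^\bullet$ to a flag with one-dimensional graded quotients. Non-degeneracy of the restricted Hermitian form on $\exp(zN)\hat F^k$ for $|y|$ large then reduces to showing that the leading principal minors of the global Gram matrix are nonzero for $|y|$ large. Each such minor factors through a slice of a single $M_l$-block, and a direct computation (for instance via expansion along the antidiagonal, which for the full block $M_l$ is a nonzero constant and for a $k \times k$ truncation yields a polynomial in $y$ of exact degree $k(l-k+1) \geq 0$) shows that every leading principal minor is a nonvanishing polynomial in $y$.

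For part (2), by Theorem \ref{thmcks2} the nilpotent orbit $\exp(zN) F^\bullet$ is $d$-opposed to its conjugate, yielding a Hodge decomposition $H = \bigoplus_p H^{p,d-p}_z$. The task is to identify, asymptotically as $y \to \infty$, a basis of $H^{p,d-p}_z$ consisting of $\exp(zN)$-images of $N^r e_\alpha + (\text{lower-order corrections})$ for $e_\alpha$ ranging over primitive bases of $P^{p+r,d+l-p+r}$ with $l \geq p-d$ and $r \geq \max(0,-l)$. Once this identification is established, the block structure above together with the Weil-operator twist $C = (\sqrt{-1})^{2p-d}$ on $H^{p,d-p}_z$ shows that the Hermitian form $S(C\bullet, \bar\bullet)$ restricts on each such basis vector with sign matching a $(s^{p+r,d+l-p+r}_+, s^{p+r,d+l-p+r}_-)$ contribution. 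Summing over admissible $(l,r)$ produces $\bigl(\sum_k \textbf{S}^{p,k}_+, \sum_k \textbf{S}^{p,k}_-\bigr)$; positive-definiteness in the case $s^{p,q}_- \equiv 0$ then shows that $\exp(zN) F^\bullet$ is polarized, i.e.\ a nilpotent orbit. The principal obstacle is precisely this asymptotic identification of $H^{p,d-p}_z$: in the classical polarized CKS framework it follows from the $SL_2$-orbit theorem, and in our generalization one must rerun the same estimate while tracking the bilinear form $S$ using only that it polarizes each primitive piece up to sign; the explicit block structure of the Gram matrix computed above supplies the necessary control, and part (1) ensures that the refined flag is compatible with taking asymptotic limits throughout the argument.
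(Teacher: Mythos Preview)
Your outline is essentially the paper's argument in the $\R$-split case, and there it is correct: Deligne's $I^{p,q}$ plus the $N$-Lefschetz decomposition give a basis in which the Gram matrix of $(\sqrt{-1})^d S(e^{zN}\bullet,\overline{e^{zN}\bullet})$ is block-diagonal, each block is the explicit Hankel-type matrix you describe, and its principal minors are computed by the Young-tableaux identity (the paper's Lemma~\ref{lmac} and Corollary~\ref{cor1}).

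The genuine gap is the non-$\R$-split case. Deligne's splitting only gives $\overline{I^{p,q}} \equiv I^{q,p} \bmod \bigoplus_{r<p,\,s<q} I^{r,s}$, so $\overline{e_\alpha} = e_{\alpha}' + s_\alpha$ with $s_\alpha \in W_{p+q-2}$. Consequently the Gram matrix is \emph{not} block-diagonal: the off-block entries $S(e^{zN}N^r e_\alpha,\overline{e^{zN}N^s e_\beta})$ for $\alpha\neq\beta$ are nonzero polynomials in $y$, and your sentence ``each such minor factors through a slice of a single $M_l$-block'' fails as stated. The heart of the paper's proof is an induction (the items labelled (1)--(6) at the end of Section~6) that performs block diagonalization in a carefully chosen total order on $(p,q,i,r)$ and verifies at every step that the ``supplementary terms'' $-B^*A^{-1}B$ have strictly lower order in $y$ than the diagonal leading terms; this is what forces the specific secondary ordering by $q-r$ and then by $r$, not an arbitrary one. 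Your final paragraph gestures at this (``one must rerun the same estimate\ldots''), but the block structure you computed does not itself supply the control---it is the leading term, and the content of the argument is precisely bounding the perturbation away from it. Without that estimate, neither the non-degeneracy in part~(1) nor the signature count in part~(2) is established.
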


\begin{remark}
Theorem \ref{thmcks2} and Theorem \ref{thmcks} are generalizations of the converse of nilpotent orbit theorem of Cattani-Kaplan-Schmid \cite[Corollary 3.13]{CKS}. It is possible to follow their methods to prove the first theorem. More precisely, if we consider the classifying space $D$ of unpolarized Hodge structures according to the assumption in Theorem \ref{thmcks2}, then $D$ is still a homogeneous space of the form $G(\mb{R})/K$, where $G(\mb{R})$ is a real Lie group and $K$ is a subgroup of $G(\mb{R})$, at the expense that $K$ might be non-compact. Since $D$ is still a real open subset of its compact dual $D^{\vee}$, we can follow the line of Cattani-Kaplan \cite[Proposition 2.18]{CK} to deal with the case of the mixed Hodge structure split over $\R$ and then use the same methods as in \cite[Corollary 3.13]{CKS} to reduce the general case to the case of the mixed Hodge structure split over $\R$. In the last section of the paper, we will prove the two theorems above by direct combinatorial calculation. 
\end{remark}

Now, we can prove the main theorem of this section: 
\begin{proof}[Proof that Theorem \ref{thmcks2} $\plus$ Theorem \ref{thmcks} $\Rightarrow$ Theorem \ref{main}]


First, since $\Tilde{\mathcal{F}}^{\bullet}$ is the corresponding filtration of $\mathcal{F}^{\bullet}$ under canonical extension, by Deligne's construction, this means $\text{exp}(-zN)\varphi^{*}\mathcal{F}^{\bullet}$ viewed as the filtration on $\Tilde{\mathcal{H}}|_{\Delta*}$ coincides with $\Tilde{\mathcal{F}}^{\bullet}|_{\Delta*}$. 

To prove Theorem \ref{main} (1), it suffices to show that  $(\varphi^{*}\mathcal{F}^{k})_z\oplus \overline{(\varphi^{*}\mathcal{F}^{d-k+1})_z}=(\varphi^{*}\mathcal{H})_z$ for all $0\leq k\leq d$ and $\textup{Im}(z)$ sufficiently large. We may assume $0\leq \textup{Re}(z)\leq 1$ and write $z=a+it$. We choose a basis $\{v_i^k\}_{i\in I^k}$ of $\Tilde{\mathcal{F}}_{0}^{k}$ for every $0\leq k\leq d$, and possibly by shrinking $\Delta$, we can extend $v_i^k$ to a holomorphic section $v_i^k(q)$ of  $\Tilde{\mathcal{F}}^{k}$ so that $\{v_i^k(q)\}_{i\in I^k}$ is a basis of $\Tilde{\mathcal{F}}_{q}^{k}.$ Then $(\varphi^{*}\mathcal{F}^{k})_z$ has a basis of the form $$u_i^k(z)=e^{zN}v_i^k(q).$$ We have $|q|=O(e^{-2\pi t})$ and $|z|=O(t)$ since $0\leq \textup{Re}(z)\leq 1$. And since $N^{d+1}=0$, we have $$e^{zN}v_i^k(q)=e^{zN}(v_i^k+O(e^{-2\pi t}))=e^{zN}v_i^k+O(t^de^{-2\pi t}).$$ Therefore, the top wedge of the basis of $(\varphi^{*}\mathcal{F}^{k})_z$ and $ \overline{(\varphi^{*}\mathcal{F}^{d-k+1})_z}$ would be $$\Phi^k:=\left(\bigwedge_{i\in I^k}u_i^k(z) \right)\wedge\left( \bigwedge_{j\in I^{d-k+1}} \overline{u_j^{d-k+1}(z)}\right)$$
$$=\left(\bigwedge_{i\in I^k} e^{zN}v_i^k+O(t^de^{-2\pi t}) \right)\wedge\left( \bigwedge_{j\in I^{d-k+1}} e^{\overline{z}N}\overline{v_j^{d-k+1}}+O(t^de^{-2\pi t})\right)$$ $$=\left(\bigwedge_{i\in I^k} e^{zN}v_i^k\right)\wedge\left( \bigwedge_{j\in I^{d-k+1}} e^{\overline{z}N}\overline{v_j^{d-k+1}}\right)+O\left(t^{d(\text{dim}_{\C}H)}e^{-2\pi t}\right)$$

By our assumption, $(\Tilde{\mathcal{H}}_0, W(N,d),\tilde{\mathcal{F}}_0)$ is a mixed Hodge structure. So by Theorem \ref{thmcks2}, $\text{exp}(zN)\tilde{\mathcal{F}}_0^{\bullet}$ is $d$-opposed to its complex conjugate for $\text{Im}(z)$ sufficiently large. Therefore, the leading term $$\left(\bigwedge_{i\in I^k} e^{zN}v_i^k\right)\wedge\left( \bigwedge_{j\in I^{d-k+1}} e^{\overline{z}N}\overline{v_j^{d-k+1}}\right)$$ is a nonvanishing top form for $t$ sufficiently large and we can rewrite it as $$C_k(a,t)\left(\bigwedge_{i\in I^0} v_i^0\right)$$
where $C_k(a,t)$ is a polynomial in $a$ and $t$ and is nonzero for $t$ sufficiently large. Then we must have $|C_k(a,t)|\geq C_kt^{L_k}$  for $t$ sufficiently large, for some $C_k> 0$ and nonnegative integer $L_k$. So we can conclude that $\Phi^k$ is nonvanishing for $t$ sufficiently large, and hence $(\varphi^{*}\mathcal{F}^{k})_z$ is $d$-opposed to its complex conjugate for $t$ large.

To prove Theorem \ref{main} (2), we need to show that $S$ has the corresponding signature on $(\varphi^{*}\mathcal{F}^{k})_z \bigcap \overline{(\varphi^{*}\mathcal{F}^{d-k})_z}$. Due to the $d$-opposedness condition that we have just proved and the first Hodge-Riemann bilinear relation, it is equivalent to show that for $0\leq k\leq d,$ the signature of the Hermitian form  $(\sqrt{-1})^d S(\bullet,\overline{\bullet})$ on $(\varphi^{*}\mathcal{F}^k)_z$ is 
\begin{equation}\label{eq1}
\mathlarger{\mathlarger{\sum}}_{j=k}^{d}(-1)^{d-j}\left(\sum_{\substack{l\geq j-d, \\ r\geq(-l)_{+}} }s_{+}^{j+r,d+l-j+r}-\sum_{\substack{l\geq j-d, \\ r\geq(-l)_{+}} }s_{-}^{j+r,d+l-j+r}\right)
\end{equation} for $t$ sufficiently large. Here, by abuse of terminology, given a non-degenerate bilinear form $S$ (or Hermitian form) with signature $(s_{+},s_{-})$, we call the number $s_{+}-s_{-}$ the signature of $S$ as well.

 Replace the vectors $v_i^k$ with the basis compatible with the refined filtration from Theorem \ref{thmcks2}.  We have $$(\sqrt{-1})^dS(u_i^k(z),\overline{u_{i'}^k(z)})$$
$$= (\sqrt{-1})^d S(e^{zN}v_i^k+O(t^de^{-2\pi t}),\overline{e^{zN}v_{i'}^k}+O(t^de^{-2\pi t}))$$
$$=(\sqrt{-1})^d S(e^{zN}v_i^k,e^{\overline{z}N}\overline{v_{i'}^k})+O(t^{2d}e^{-2\pi t}).$$

Since the mixed Hodge structure $(\Tilde{\mathcal{H}}_0, W(N,d),\tilde{\mathcal{F}}_0)$ and $S(\bullet,\bullet)$ satisfy all of the assumptions in Theorem \ref{thmcks}, the Hermitian form  $(\sqrt{-1})^d S(\bullet,\overline{\bullet})$ has signature as in (\ref{eq1}) on the filtration $\textup{exp}(zN)\tilde{\mathcal{F}}_0^k.$ Thus the leading term $$\left((\sqrt{-1})^d S(e^{zN}v_i^k,e^{\overline{z}N}\overline{v_{i'}^k})\right)_{i,i'}$$ of the matrix $\left((\sqrt{-1})^dS(u_i^k(z),\overline{u_{i'}^k(z)})\right)_{i,i'}$ is a nondegenerate Hermitian matrix with signature the same as (\ref{eq1}).

We recall the following easy linear algebra fact:
\begin{lemma} \label{linear}
    Suppose that $A$ is a non-degenerate Hermitian matrix such that the leading principal $l\times l$ submatrices are non-degenerate for all $l$. Then $A$ is $*$-congruent to the diagonal matrix $D=\textup{diag}(M_0, M_1/M_0, M_2/M_1,...)$,  where $M_l$ is the leading principal $l\times l$ minor of $A$. (In other words, there exists an invertible matrix $P$ such that $A=P^{*}DP.$)
\end{lemma}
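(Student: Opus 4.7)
The plan is to prove this by induction on the dimension $n$ of $A$, using block Gaussian elimination together with the Schur complement identity for the determinant. This is essentially Jacobi's formula expressing the inertia of a Hermitian form through its chain of leading principal minors, read off from an $LDL^{*}$-style decomposition.

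The base case $n=1$ is immediate. For the inductive step, partition $A$ as
$$A = \begin{pmatrix} A_{n-1} & b \\ b^{*} & c \end{pmatrix},$$
where $A_{n-1}$ is the leading $(n-1)\times (n-1)$ principal submatrix, $b\in \C^{n-1}$, and $c\in \R$. By hypothesis $A_{n-1}$ is non-degenerate, so one can form the invertible matrix
$$P = \begin{pmatrix} I_{n-1} & -A_{n-1}^{-1} b \\ 0 & 1 \end{pmatrix},$$
and a direct block multiplication yields
$$P^{*} A P = \begin{pmatrix} A_{n-1} & 0 \\ 0 & c - b^{*} A_{n-1}^{-1} b \end{pmatrix}.$$
The classical Schur complement identity $\det A = \det(A_{n-1})\cdot (c - b^{*} A_{n-1}^{-1} b)$ then identifies the new bottom-right entry as $M_n / M_{n-1}$. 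Since the leading principal $l\times l$ minors of $A_{n-1}$ agree with $M_0, M_1, \ldots, M_{n-1}$, the induction hypothesis applies to $A_{n-1}$ and diagonalizes it into the prescribed form; combining with the last diagonal entry $M_n/M_{n-1}$ gives the required $D$.

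There is no serious obstacle. One only needs to verify that $P^{*}AP$ is a legitimate $*$-congruence (immediate since $P$ is invertible and the resulting matrix is Hermitian) and that the Schur complement computes the claimed ratio $M_n/M_{n-1}$ (the standard block-determinant identity). The inductive hypothesis handles the remaining part uniformly, so the main conceptual content is the one-step block elimination above.
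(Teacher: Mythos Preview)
Your proof is correct and is essentially the same as the paper's: both perform a block $LDL^{*}$ factorization by peeling off one row/column at a time and identifying the new diagonal entry with the Schur complement $M_l/M_{l-1}$. The paper writes the identity as $A = L\,\mathrm{diag}(G,\,C-B^{*}G^{-1}B)\,L^{*}$ and then says ``inductively performing this operation with respect to the leading principal submatrices,'' which is exactly your induction spelled out with the explicit Schur determinant formula.
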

\begin{proof}
   Given a Hermitian matrix with $A=\left(\begin{array}{cc}
   G  & B \\
    B^* & C
\end{array}\right)$ with $G$ invertible, we can perform the block diagonalization  $$\left(\begin{array}{cc}
   G  & B \\
    B^* & C
\end{array}\right)=\left(\begin{array}{cc}
   I  & 0 \\
   B^{*}G^{-1}  & I
\end{array}\right)\left(\begin{array}{cc}
   G  & 0 \\
    0 & C-B^*G^{-1}B
\end{array}\right)\left(\begin{array}{cc}
   I  & G^{-1}B \\
    0 & I
\end{array}\right).$$ Inductively performing this operation to $A$ with respect to the leading principal submatrices, one gets the diagonal matrix $D=\textup{diag}(M_0, M_1/M_0, M_2/M_1,...)$.
\end{proof}

If $A$ is the matrix $\left((\sqrt{-1})^d S(e^{zN}v_i^k,e^{\overline{z}N}\overline{v_{i'}^k})\right)_{i,i'}$, then by our choice of basis, the leading principal $l\times l$ submatrices are non-degenerate. In particular, if we let $P_l(a,t)$ be the leading principal $l\times l$ minor of $A$, then $P_l(a,t)$ is a polynomial in $a$ and $t$ which must be nonvanishing for $t$ sufficiently large, Therefore, we have $|P_l(a,t)|\geq C_lt^{K_l}$ for $t$ sufficiently large, for some $C_l>0$ and nonnegative integer $K_l$.

Let $B=\left((\sqrt{-1})^dS(u_i^k(z),\overline{u_{i'}^k(z)})\right)_{i,i'}$. By our computation, we can see that the upper left $l \times l$ minor $M_l$ of $B$ is of the form $$P_l(a,t)+O(t^{n_l}e^{-2\pi t}),$$ where $n_l$ is a non-negative integer. Hence, $M_l$ is nonzero for $t$ sufficiently large, and the leading principal $l\times l$ submatrices of $B$ are also non-degenerate. Now we have  $$M_l^{-1}=\frac{1}{P_l(a,t)}\left(1+\sum_{j\geq 0}\left(-P_l(a,t)O(t^{n_l}e^{-2\pi t})\right)^j\right)=\frac{1}{P_l(a,t)}+O(t^{n_l}e^{-2\pi t})$$ and $$\frac{M_{l+1}}{M_{l}}=\frac{P_{l+1}(a,t)}{P_{l}(a,t)}+O(t^{n'_l}e^{-2\pi t})$$ for some nonnegative integer $n'_l$. Therefore, $M_{l+1}/M_{l}$ must have the same sign as 
$P_{l+1}(a,t)/P_{l}(a,t)$ for $t$ sufficiently large. So we can conclude that the Hermitian matrix $\left(\sqrt{-1})^dS(u_i^k(z),\overline{u_{i'}^k(z)}\right)_{i,i'}$ also has signature as in (\ref{eq1}) for $t$ sufficiently large. The proof is complete.

\end{proof}

\begin{remark}
In section $6$, we will show that $C_k(a,t)=C_k t^{L_k}+O(t^{L_k-1})$ with $C_k\neq 0$ and $L_k\geq 0$, $P_{l+1}(a,t)/P_{l}(a,t)=C'_lt^{L'_l}+ O(t^{L'_l-1})$ with $C'_l\neq 0$ and $k-d\leq L'_l\leq d$. The asymptotic orders $L_k$ and $L'_l$ are explicitly computable (cf. Remark \ref{rmk'}).


\end{remark}

\section{Geometric Variation of Hodge Filtration}

Suppose we are given a one-parameter degeneration of $m$-dimensional compact complex manifolds $f:X \rightarrow \Delta$, where $X$ is a connected complex manifold, $\Delta$ is the unit disk and $f$ is a proper holomorphic map that is smooth over the punctured disk $\Delta^*$ and $E:=f^{-1}(0)=X_0$ is a simple normal crossing divisor all of whose components are reduced. Suppose, moreover, that the irreducible components of $E$ are all K\"ahler or, more generally, each $k$-fold intersection of $E$ satisfies the $\partial \overline{\partial}$-lemma. The total space is homotopy equivalent to $E$ by a fiber preserving retraction $r:X \rightarrow E$. So the composition of the inclusion map $i_t: X_t\hookrightarrow X$ and the retraction gives a specialization map $r_t: X_t\rightarrow E$. The complex $\psi_f\underline{\Z}_X:=(Rr_t)_{*}i_t^*\underline{\Z}_X$ is the complex of the nearby cocycle so that $\mathbb{H}^d(E,\psi_f\underline{\Z}_X)=H^d(X_{\infty})$ for all $d\geq 0$, where $X_{\infty}:=X\times_{\Delta^{*}}\mathfrak{H}$ is the canonical fiber.

We first recall and summarize Steenbrink's construction of limiting mixed Hodge structures. Consider the relative log de Rham complex $$\Omega^{\bullet}_{X/\Delta}(\text{log}E):=\Omega_X^{\bullet}(\text{log}E)/f^{*}\Omega^1_{\Delta}(\text{log}0)\wedge \Omega^{\bullet-1}_X(\text{log}E).$$

\begin{theorem}(\cite{St}, or see \cite[ \S 11.2.4-\S 11.2.7]{PS})\label{St}
\begin{enumerate}
\item
The hypercohomology $\mathbb{H}^d(E,\Omega^{\bullet}_{X/\Delta}(\textup{log}E)\otimes \mathcal{O}_E)$ is isomorphic to the cohomology $H^d(X_{\infty};\C)$, where $X_{\infty}:=X\times_{\Delta^{*}}\mathfrak{H}$ is the canonical fiber. The sheaf $\Tilde{\mathcal{H}}^d=\mathbb{R}^d f_{*}\Omega^{\bullet}_{X/\Delta}(\textup{log}E)$ is locally free and satisfies: $\Tilde{\mathcal{H}}^d|_{\Delta^*}$ is isomorphic to the holomorphic vector bundle $\mathcal{H}^d:=(R^d(f|_{f^{-1}(\Delta^*)})_{*}\C)\otimes_{\C}\mathcal{O}_{\Delta^*}$, and 
 $\Tilde{\mathcal{H}}^d$ is Deligne’s canonical extension of $\mathcal{H}^d$.
\item There exists a marked mixed Hodge complex of sheaves on $E$ $$
\psi_f^{\textup{Hdg}}=(\psi_f\underline{\Z}_X,(s(C^{\bullet,
\bullet}),W^{St}),(s(A^{\bullet,\bullet}),W^{St},F_{\lim})),$$ which defines the limiting mixed Hodge structure over $\Q$ on $H^d(X_{\infty})$ so that the nilpotent endomorphism $N=\textup{log}(T)$ acts as a morphism of mixed Hodge structure of type $(-1,-1)$.
In particular, the weight spectral sequence $${}_{W^{St}}E_1^{-r,d+r}=\bigoplus_{k\geq 0,-r} H^{d-r-2k}(E(2k+r+1))(-r-k)\Rightarrow H^d(X_{\infty},\Q)$$ degenerates at $E_2$

\item The spectral sequence with $E_1$ page $$E^{p,q}_1=H^q(E; \Omega^{p}_{X/\Delta}(\textup{log}E)\otimes \mathcal{O}_E) \Rightarrow H^{p+q}(X_{\infty};\C)$$ degenerates  at $E_1$ and the corresponding filtration on $H^{p+q}(X_{\infty};\C)$ is the Hodge filtration. Moreover, possibly after shrinking $\Delta$, the spectral sequence  with $E_1$ page $$E^{p,q}_1=R^qf_{*}\Omega^{p}_{X/\Delta}(\textup{log}E) \Rightarrow \R^{p+q}f_{*}\Omega^{\bullet}_{X/\Delta}(\textup{log}E)=\Tilde{\mathcal{H}}^{p+q}$$ degenerates at $E_1$. Thus, for $t\in\Delta^*$, the Hodge to de Rham spectral sequence for $X_t$ degenerates at $E_1$ and the coherent sheaves $R^qf_{*}\Omega^{p}_{X/\Delta}(\textup{log}E)$ are locally free.

 \end{enumerate}
\end{theorem}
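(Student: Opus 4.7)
The plan is to follow Steenbrink's construction of the limiting mixed Hodge structure, adapted from the projective/K\"ahler setting to the $\partial\overline{\partial}$ setting via Fujisawa's generalization of Deligne's mixed Hodge complex formalism. I would tackle the three assertions in the order (1), (2), (3), with the understanding that local freeness of $\tilde{\mathcal{H}}^d$ in part (1) is logically deferred to the relative Hodge-to-de-Rham degeneration in part (3).

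For part (1), the core is a quasi-isomorphism $\psi_f \underline{\C}_X \simeq \Omega^\bullet_{X/\Delta}(\log E) \otimes \mathcal{O}_E$ in the derived category on $E$. Locally, in a polydisc model $f = z_1 \cdots z_k$, one verifies this by explicit comparison with Steenbrink's generating forms; the quotient by $f^* \Omega^1_\Delta(\log 0) \wedge \Omega^{\bullet - 1}_X(\log E)$ exactly kills the contribution of the base coordinate. Hypercohomology then yields $\mathbb{H}^d(E, \Omega^\bullet_{X/\Delta}(\log E) \otimes \mathcal{O}_E) \cong H^d(X_\infty, \C)$. Over $\Delta^*$ the isomorphism $\tilde{\mathcal{H}}^d|_{\Delta^*} \cong \mathcal{H}^d$ is tautological since the log structure is trivial there; the identification of $\tilde{\mathcal{H}}^d$ with Deligne's canonical extension is obtained by computing the residue of the Gauss--Manin connection on $\Omega^\bullet_{X/\Delta}(\log E)$ at the origin and checking it agrees with $N/(2\pi\sqrt{-1})$, in line with Deligne's normalization.

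For part (2), I would set up Steenbrink's bifiltered double complex with terms $A^{p,q} = \Omega^{p+q+1}_X(\log E)/W_p \Omega^{p+q+1}_X(\log E)$, weight filtration $W^{St}_r A^{p,q}$ equal to the image of $W_{r+2p+1}$, and Hodge filtration $F^r_{\lim} s(A^{\bullet,\bullet}) = \bigoplus_p \bigoplus_{q \geq r} A^{p,q}$, together with a compatible $\Q$-model $C^{\bullet,\bullet}$ built from the sheaves of nearby cycles. Iterated Poincar\'e residue isomorphisms identify
$${}_{W^{St}}E_1^{-r, d+r} \;=\; \bigoplus_{k \geq \max(0, -r)} H^{d-r-2k}(E(2k+r+1))(-r-k),$$
where $E(j)$ denotes the disjoint union of $j$-fold intersections of components of $E$. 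The principal obstacle, and the only place where the hypothesis on $X_0$ is consumed, is verifying that this $E_1$ page carries pure Hodge structures of the expected weight $d+r$. Here I would invoke Fujisawa's theorem: because each stratum $E(j)$ satisfies the $\partial\overline{\partial}$-lemma, $H^\bullet(E(j))$ is a pure Hodge structure, and the Gysin maps and restriction maps appearing in the $d_1$ differential are morphisms of pure Hodge structures of the appropriate types. Granted purity at $E_1$, the standard strictness argument in the category of (pure) Hodge structures shows that $d_1$ is strict and that all higher differentials $d_r$ for $r \geq 2$ vanish, as they would be morphisms of pure Hodge structures of distinct weights. This simultaneously produces the mixed Hodge complex and yields $E_2$-degeneration. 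The fact that $N$ acts as a morphism of mixed Hodge structures of type $(-1,-1)$ is visible from the level shift $\nu: A^{p,q} \to A^{p+1,q-1}$ that models $N/(2\pi\sqrt{-1})$.

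For part (3), the $E_1$-degeneration of the absolute Hodge-to-de-Rham spectral sequence on $H^{p+q}(X_\infty, \C)$ is immediate from strictness of $F_{\lim}$, a built-in feature of the mixed Hodge complex produced in part (2). For the relative version, I would combine upper semicontinuity of the dimensions of $R^q f_* \Omega^p_{X/\Delta}(\log E)$ over $\Delta$, the smoothness of $f$ over $\Delta^*$ (where these sheaves realize $R^q (f|_{f^{-1}(\Delta^*)})_* \Omega^p_{X/\Delta}$ with locally constant rank), and the dimension count at the origin coming from absolute $E_1$-degeneration. After possibly shrinking $\Delta$, the fiber dimensions are constant, so Grauert's theorem gives local freeness of $R^q f_* \Omega^p_{X/\Delta}(\log E)$, whence relative $E_1$-degeneration. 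Cohomology and base change then transfers $E_1$-degeneration to every smooth fiber $X_t$, $t \in \Delta^*$, and closes the loop by supplying the local freeness used in part (1).
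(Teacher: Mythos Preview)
The paper does not prove this theorem; it is stated as background and attributed entirely to Steenbrink \cite{St} and Peters--Steenbrink \cite[\S 11.2.4--\S 11.2.7]{PS}, so there is no ``paper's own proof'' to compare against. Your outline is a faithful sketch of the standard argument found in those references (and Fujisawa's extension to the $\partial\overline{\partial}$ setting), and the logical dependencies you identify---in particular deferring local freeness of $\tilde{\mathcal{H}}^d$ to the relative degeneration in (3), and using purity of the strata to force $E_2$-degeneration via strictness---are exactly how the literature proceeds.
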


\begin{remark}\label{neg}
  It should be noticed that the nilpotent operator $N$ under Steenbrink's setup is different from the nilpotent operator $N$ in the abstract setup (that is, the nilpotent operator $N$ of Schmid \cite{schmid}, Cattani-Kaplan-Schmid \cite{CKS},  and the previous section) by a \textit{negative} sign. The reason is that under the abstract setup, we use the inverse of the monodromy operator to trivialize the local system on the upper half-plane, which leads to a negative sign on the nilpotent operator. For some more elaboration on this difference, we can refer to \cite[6.27, 6.28]{Fujisawa2}.
\end{remark}

For later use, we describe Steenbrink's weight filtration more precisely here. Let $E_i$ be irreducible components of $E$. For each $l\geq 1$ and each $I=(i_1,...,i_l)$, we denote $E_I$ the intersection $E_{i_1}\cap...\cap E_{i_l}$ and $E(l)$ the disjoint union of $E_I$ with $|I|=l$. We have natural inclusions $\rho^I_j:E_I\rightarrow E_J$ where $J=(i_1,...,\hat{i_j},...,i_l)$ and $\rho_l^k:\bigoplus_{|I|=l}\rho^I_j:E(l)\rightarrow E(l-1)$.

We denote $\gamma_l$ the alternating sum of the Gysin morphisms: $$\gamma_l=\bigoplus_{j-1}^l(-1)^{j-1}(\rho^r_j)_{!}:H^k(E(l))\rightarrow H^{k+2}(E(l-1))(1)$$ and $\theta_{l}$ the alternating sum of the pullback morphism: $$\theta_{l}=\bigoplus_{j-1}^l(-1)^{j-1}(\rho^{m+1}_j)^*:H^k(E(l))\rightarrow H^{k}(E(l+1)).$$

The $E_1$ page of the weight spectral sequence is $${}_{W^{St}}E_1^{-r,d+r}=\bigoplus_{k\geq 0,-r} H^{d-r-2k}(E(2k+r+1))(-r-k)$$ and the differential $d_1$ corresponds to the map $-\bigoplus_{k\geq 0, -r}\gamma_{r+2k+1}+\bigoplus_{k\geq 0, -r}\theta_{r+2k+1}$ on ${}_{W^{St}}E^{-r,d+r}_1$ (which will be denoted by $-\gamma+\theta$ for simplicity): $$ \bigoplus_{k\geq 0,-r} H^{d-r-2k}(E(2k+r+1))(-r-k)\rightarrow  \bigoplus_{k\geq 0,-r+1} H^{d-r-2k+2}(E(2k+r))(-r-k+1).$$

For all $r\geq 0$, note that 
\begin{equation*}
    \begin{split}
   {}_{W^{St}}E_1^{r,d-r} &=\bigoplus_{k\geq 0,r} H^{d+r-2k}(E(2k-r+1))(r-k)\\
    &=\bigoplus_{l\geq 0,-r} H^{d-r-2l}(E(2l+r+1))(-l)={}_{W^{St}}E_1^{-r,d+r}(r)
    \end{split}
\end{equation*} by taking $l=k-r$. Let $I$ denote the identity map ${}_{W^{St}}E_1^{-r,d+r}\rightarrow {}_{W^{St}}E_1^{r,d-r}(-r)$ above. Then the morphism $N^r$ from ${}_{W^{St}}E_2^{-r,d+r}=\textup{Gr}_{d+r}^{W^{St}}H^d(X_{\infty})$ to ${}_{W^{St}}E_2^{r,d-r}=\textup{Gr}_{d-r}^{W^{St}}H^d(X_{\infty})$ is induced by the map $(2\pi \sqrt{-1})^rI$. Thus, to check that $W^{St}=W(-N,d)$ on $H^d(X_{\infty})$, it suffices to show that the isomorphism  $(2\pi \sqrt{-1})^rI$ from ${}_{W^{St}}E_1^{-r,d+r}$ to ${}_{W^{St}}E_1^{r,d-r}$ also induces an isomorphism at the $E_2$ page for every $r\geq 0$.

Now, we turn to the product structure on the limiting mixed Hodge structure. Since the nearby cocycle complex is quasi-isomorphic to the relative log de Rham complex, there is a natural cup product defined on $H^{\bullet}(X_{\infty})$, which will be denoted by $Q(\bullet,\bullet)$. More precisely, from the product structure on $\Omega_{X/\Delta}^{\bullet}(\textup{log}(E))$, we have a natural product morphism $$\R^{i}f_{*}\Omega_{X/\Delta}^{\bullet}(\textup{log}(E))\otimes \R^jf_{*}\Omega_{X/\Delta}^{\bullet}(\textup{log}(E))\rightarrow \R^{i+j}f_{*}\Omega_{X/\Delta}^{\bullet}(\textup{log}(E)).$$ Also, we have a relative trace morphism $$\R^{2m}f_{*}\Omega_{X/\Delta}^{\bullet}(\textup{log}(E))=R^{m}f_{*}\omega_{X/\Delta}\rightarrow \mathcal{O}_{\Delta},$$ where $\omega_{X/\Delta}$ is the relative dualizing sheaf of $f$. The cup product $$Q: H^{\bullet}(X_{\infty},\C)\otimes H^{2m-\bullet}(X_{\infty},\C)\rightarrow \C$$ is identified with the fiber over zero of the (relative) cup product $$\R^{\bullet}f_{*}\Omega_{X/\Delta}^{\bullet}(\textup{log}(E))\otimes \R^{2m-\bullet}f_{*}\Omega_{X/\Delta}^{\bullet}(\textup{log}(E))\rightarrow \mathcal{O}_{\Delta}.$$

\begin{remark}
 On a $m$-dimensional compact complex manifold $Z$, the natural trace map in the duality theory is different from the integration map by a sign of $(-1)^m$. To be precise, if we let $$\int_Z : H^{2m}(Z,\C)\rightarrow \C$$ be the integration map on $Z$, then the natural trace map in the duality theory is $(-1)^m\int_Z$ or $\frac{(-1)^m}{(2\pi\sqrt{-1})^m}\int_Z$ depending on whether we fix a choice of $\sqrt{-1}$ or not. We refer to \cite{BS} and \cite{RRV} for the absolute and relative duality theory on complex spaces and \cite{Con} for the clarification of the signs. In our case, we would normalize the relative trace map so that the restriction of $$R^mf_{*}\omega_{X/\Delta}\rightarrow\mathcal{O}_{\Delta}$$ to the general fiber $X_t$ is just the integration map $$\int_{X_t}:H^{2m}(X_t,\C)\rightarrow\C.$$
\end{remark}

Since Steenbrink's limiting mixed Hodge complex $s(A^{\bullet,\bullet})$ doesn't carry a natural product structure, it is not clear from the definition how Steenbrink's weight filtration $W^{St}$ behaves under the cup product. To study the behavior of $W^{St}$ under the cup product, Fujisawa \cite{fujisawa} constructs another mixed Hodge complex of sheaves $(K_{\Q},K_\C,W,F)$ which carries a natural product structure and admits a quasi-isomorphism of bigraded complexes $$(s(C^{\bullet,\bullet}),s(A^{\bullet,\bullet}),W^{St},F)\rightarrow (K_{\Q}, K_\C,W,F).$$ Hence, there is a product morphism  $$\mathbb{H}^i(E,K_{\C})\otimes \mathbb{H}^j(E,K_{\C})\rightarrow \mathbb{H}^{i+j}(E,K_{\C}).$$  Fujisawa also constructs a trace morphism $$\textup{Tr}: \mathbb{H}^{2m}(E,K_{\C})\rightarrow \C$$ and then defines the cup product pairing $$Q_K: \mathbb{H}^{\bullet}(E,K_{\C})\otimes \mathbb{H}^{2m-\bullet}(E,K_{\C})\rightarrow \C$$ in [ibid., Definition 7.11 and  Definition 7.13]. 

 It is not stated in \cite{fujisawa} that the cup product $Q_K$ defined in [ibid., Definition 7.13] coincides with the one we describe above. In \cite{FuNa}, Fujisawa-Nakayama checked the compatibility of the cup product $Q_K$ defined in \cite{fujisawa} and the cup product on the Kato-Nakayama space $E^{\textup{log}}_{\infty}$ of $E$. Using similar tricks, we can check the compatibility of the cup product $Q_K$ defined in \cite{fujisawa} with the cup product $Q$ described above. We sketch the proof of this compatibility below:

\begin{lemma}
    The cup product $Q_K$ defined in \cite{fujisawa} is compatible with the cup product $Q$ described above.
\end{lemma}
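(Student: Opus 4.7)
The plan is to follow the approach of Fujisawa-Nakayama \cite{FuNa} and compare both $Q$ and $Q_K$ with the topological cup product on the Kato-Nakayama space $E^{\textup{log}}_\infty$. The Kato-Nakayama space $E^{\textup{log}}$ of $E$ (with its log structure restricted from $X$) admits a canonical cover $E^{\textup{log}}_\infty$ together with a ring isomorphism $H^\bullet(X_\infty,\C)\cong H^\bullet(E^{\textup{log}}_\infty,\C)$ with respect to the topological cup products (via the proper homotopy equivalence $X_\infty\to E^{\textup{log}}_\infty$). It therefore suffices to show that, under the quasi-isomorphisms of filtered complexes recalled in Theorem \ref{St} and in the construction of $K_\C$, both $Q$ and $Q_K$ correspond to this topological cup product on $E^{\textup{log}}_\infty$.

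For $Q_K$ this is essentially the content of \cite{FuNa}: Fujisawa-Nakayama construct a multiplicative quasi-isomorphism from $K_\C$ to a suitable complex of log $C^\infty$ forms on $E^{\textup{log}}$, and they verify that Fujisawa's trace morphism $\textup{Tr}:\mathbb{H}^{2m}(E,K_\C)\to\C$ corresponds to fiberwise integration on $E^{\textup{log}}_\infty$. For $Q$ we produce an analogous multiplicative comparison from $\Omega^\bullet_{X/\Delta}(\textup{log}E)\otimes\mathcal{O}_E$ into the log $C^\infty$ de Rham complex on $E^{\textup{log}}_\infty$, using the Kato-Nakayama comparison theorem for the relative log de Rham complex; and we verify that the fiber at $0$ of the relative Grothendieck trace $R^m f_{*}\omega_{X/\Delta}\to\mathcal{O}_\Delta$ likewise corresponds to fiberwise integration on $E^{\textup{log}}_\infty$. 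Granting these two identifications, the equality of $Q$ and $Q_K$ under the canonical isomorphism is immediate from multiplicativity of the wedge product on all three complexes together with the matching of the trace maps.

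The main obstacle is precisely the compatibility of the two trace maps at the singular fiber, since the Grothendieck-type trace on $\Omega^\bullet_{X/\Delta}(\textup{log}E)$ and Fujisawa's combinatorial trace on $K_\C$ are constructed by rather different means. A clean way to handle this is to regard both as $\mathcal{O}_\Delta$-linear morphisms (or morphisms of constructible sheaves on $\Delta$ in the $E^{\textup{log}}$ picture) from the top Hodge bundle $\Tilde{\mathcal{H}}^{2m}$ to $\mathcal{O}_\Delta$; by the normalization fixed in the remark preceding the lemma, both restrict on a nearby smooth fiber $X_t$ (with $t\neq 0$) to the ordinary integration map $\int_{X_t}:H^{2m}(X_t,\C)\to\C$. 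Agreement on the dense open $\Delta^{*}$ then propagates to agreement at $0$ by the coherence (resp.\ continuity) of the two trace morphisms, which, combined with the straightforward multiplicativity at the level of complexes, yields the desired compatibility $Q=Q_K$.
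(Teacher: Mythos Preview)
Your argument is correct, but it handles the trace compatibility differently from the paper. The paper works entirely on the central fiber: it observes that since $H^m(E_{\textup{sing}},\omega_E)=0$, the trace on $H^m(E,\omega_E)$ is determined by its restriction to the compactly supported cohomology of the smooth locus $U=E\setminus E_{\textup{sing}}$, where it is visibly integration; one then checks (via \cite[Proposition 4.7]{FuNa}) that Fujisawa's trace, precomposed with the natural map from $H^m_c(U,\omega_U)$, is likewise integration on $U$. Your approach instead spreads the problem out over $\Delta$: both the relative Grothendieck trace and the topological integration (which makes sense because $R^{2m}f_*\C$ has trivial monodromy and hence extends canonically) give holomorphic sections of $(\tilde{\mathcal{H}}^{2m})^\vee$ that coincide over $\Delta^*$ by the chosen normalization, so they coincide at $0$.

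Each route has its merits. The paper's argument is intrinsic to the central fiber and pins the trace down combinatorially via the smooth stratum, which is closer in spirit to how Fujisawa's trace is actually built. Your deformation argument is conceptually cleaner and avoids any stratification, but it relies implicitly on the fact that $H^{2m}$ has rank one with trivial monodromy so that the flat ``integration'' section genuinely extends across $0$ to the canonical extension; you should make that step explicit, since it is what replaces the paper's surjectivity trick $H^m_c(U,\omega_E)\twoheadrightarrow H^m(E,\omega_E)$.
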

\begin{proof}
 It is not hard to see that the product morphisms of $Q_K$ and $\Omega^{\bullet}_{X/\Delta}(\textup{log}(E))\otimes \mathcal{O}_{E}$ are compatible. More precisely, this is explained in the last paragraph of the proof of \cite[ Proposition 4.6]{FuNa}.
 
 The nontrivial part is to show that the trace morphism defined in \cite[Definition 7.11]{fujisawa} coincides with the one induced by the dualizing sheaf $\omega_E$ on $E$. To see this, let $E_{\textup{sing}}$ be the subspace consisting of the singular points of $E$ and let $U=E-E_{\textup{sing}}$. The trick is that since $H^m(E_{\textup{sing}},\omega_{E})=0$, there is a surjective morphism $$H^m_c(U,\omega_{E})\twoheadrightarrow H^m(E,\omega_{E})\xrightarrow{\textup{Tr}} \C,$$ where $H^m_c(U,\omega_{E})$ is the cohomology with compact support of $\omega_E$ on $U$. Thus, the trace morphism is determined by the composite $H^m_c(U,\omega_{E})\rightarrow\C$, which is simply the map induced by integration on $U$ because the trace morphism in the duality theory is defined by gluing the trace morphisms on a Stein open cover (cf. \cite[p.262-263]{BS}). Therefore, it suffices to check that the composite of the trace morphism $$\textup{Tr}: \mathbb{H}^{2m}(E, K_{\C})\rightarrow \C$$ defined in \cite[Definition 7.11]{fujisawa} with the morphism $$H^{m}_c(U,\omega_U)\rightarrow H^{2m}_c(U,\C)\rightarrow H^{2m}_c(U, W_0K_{\C})\rightarrow\mathbb{H}^{2m}_c(U, K_{\C})\rightarrow\mathbb{H}^{2m}(E, K_{\C})$$ becomes integration on $U$. This is shown in \cite[Proposition 4.7]{FuNa}.
\end{proof}

As a result, by Fujisawa \cite{fujisawa}, the cup product $Q$ satisfies the following properties:

\begin{proposition}\label{Fu} Suppose that $x\in H^{d}(X_{\infty},\Q)$ and $y\in H^{2m-d}(X_{\infty},\Q)$ and let $N$ denote the nilpotent operator. Then

\begin{enumerate}
    \item $Q(y,x)=(-1)^{d}Q(x,y)$.
    \item $Q(Nx,y)+Q( x,Ny)=0$
    \item For all $p$, $Q( F^pH^{d}(X_{\infty},\C),F^{m-p+1}H^{2m-d}(X_{\infty},\C))=0$
    \item $Q( W_a^{St} H^{d}(X_{\infty},\Q),W_b^{St} H^{2m-d}(X_{\infty},\Q))=0$ for $a+b\leq 2m-1$.
\end{enumerate}
\end{proposition}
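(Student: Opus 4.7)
The plan is to deduce all four properties from the corresponding properties of Fujisawa's product $Q_K$ and trace $\textup{Tr}$ on the mixed Hodge complex $(K_\Q, K_\C, W, F)$, using the identification $Q = Q_K$ just established. The unifying principle is that the product morphism $K_\C \otimes K_\C \to K_\C$ and the trace $\textup{Tr} : \mathbb{H}^{2m}(E, K_\C) \to \C$ are morphisms of mixed Hodge structures (with the target placed in pure weight $2m$, i.e.\ as $\C(-m)$), and each property then becomes a formal consequence of strictness and of the bi-filtered nature of the product.

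Property (1) is inherited from the graded commutativity of the DGA structure on $\Omega^{\bullet}_{X/\Delta}(\log E)$ (transported via the quasi-isomorphism to $K_\C$): for $x \in H^d$ and $y \in H^{2m-d}$ the Koszul sign is $(-1)^{d(2m-d)} = (-1)^d$. Property (2) follows because the monodromy $T$ comes from parallel transport and therefore preserves the cup product; differentiating the identity $Q(T^t x, T^t y) = Q(x,y)$ at $t=0$ yields $Q(Nx,y) + Q(x,Ny) = 0$, and the sign convention of Remark \ref{neg} is irrelevant since flipping $N \mapsto -N$ preserves the infinitesimal isometry condition. For property (3), compatibility of the product with Hodge filtrations gives $Q(F^p H^d, F^{m-p+1} H^{2m-d}) \subset F^{m+1} H^{2m}(X_\infty, \C)$ before applying $\textup{Tr}$; since Theorem \ref{St}(3) exhibits the Hodge filtration on $H^{2m}(X_\infty,\C)$ as the limit of the Hodge filtrations on smooth fibers, $F^{m+1}H^{2m}(X_\infty,\C) = 0$ and the pairing vanishes.

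For property (4), the product respects the weight filtrations, so $Q_K(W_a^{St}, W_b^{St}) \subset W_{a+b} H^{2m}(X_\infty,\C)$. The trace is a morphism of mixed Hodge structures landing in the pure weight $2m$ target $\C(-m)$, so by strictness it factors through $\textup{Gr}_{2m}^{W^{St}}$ and hence vanishes on $W_{2m-1}^{St} H^{2m}(X_\infty,\C)$. Combining the two observations, whenever $a + b \leq 2m-1$, the image $Q(W_a^{St}, W_b^{St})$ is contained in $\textup{Tr}(W_{2m-1}^{St}) = 0$.

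The main obstacle is not any of these formal consequences but rather the underlying machinery: namely that Fujisawa's product and trace on $K$ are genuine morphisms of mixed Hodge complexes and that the identification $Q_K = Q$ is compatible with the weight and Hodge filtrations and with $N$. That verification is carried out in \cite{fujisawa} and the compatibility of trace maps was established in the preceding lemma via the Stein-cover description of the duality trace as in \cite{BS} combined with \cite[Proposition 4.7]{FuNa}. Once this compatibility is granted, each of (1)--(4) reduces to the short arguments above.
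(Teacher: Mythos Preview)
Your proposal is correct and follows the same approach as the paper, which simply cites \cite[\S 7]{fujisawa} for all four properties. You have unpacked the formal mechanism (bi-filtered product, trace as a morphism of mixed Hodge structures into $\C(-m)$, strictness) that Fujisawa establishes and the paper leaves implicit, so your argument is a faithful elaboration rather than a different route.
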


 By Proposition \ref{Fu} (4), the cup product induces the morphism $$Gr_{d+r}^{W^{St}}H^{d}(X_{\infty},\Q)\otimes_{\Q}Gr^{W^{St}}_{2m-d-r}H^{2m-d}(X_{\infty},\Q)\rightarrow \Q$$ for each $d$ and $r$, still denoted by $Q$.  Consider  $\langle\bullet,\bullet\rangle=\epsilon(d-2m)Q(\bullet,\bullet)$. To calculate $\langle\bullet,\bullet\rangle$, we introduce another linear pairing $\psi$ as follows: note that $\langle\bullet,\bullet\rangle$ is a map $${}_{W^{St}}E_2^{-r,d+r}\otimes_{\C}{}_{W^{St}}E_2^{r,2m-d-r}\rightarrow \Q.$$ At the ${}_{W^{St}}E_1$ page, we have $${}_{W^{St}}E_1^{-r,d+r}=\bigoplus_{k\geq 0,-r} H^{d-r-2k}(E(2k+r+1))(-r-k)$$ and $${}_{W^{St}}E_1^{r,2m-d-r}=\bigoplus_{l\geq 0,r} H^{2m-d+r-2l}(E(2l-r+1))(r-l)=\bigoplus_{k\geq 0,-r} H^{2m-d-r-2k}(E(2k+r+1))(-k)$$ by taking $k=l-r$. Consider the linear mapping $$\psi:{}_{W^{St}}E_1^{-r,d+r}\otimes_{\C}{}_{W^{St}}E_1^{r,2m-d-r}\rightarrow \Q(-m),$$ defined by $$\psi(\eta,\xi)=
\frac{\epsilon(r+d-2m)}{(2\pi \sqrt{-1})^{m-2k-r}}\int_{E(2k+r+1)}\eta\wedge \xi$$ for $\eta\in H^{d-r-2k}(E(2k+r+1))(-r-k)$ and $\xi\in H^{2m-d-r-2k}(E(2k+r+1))(-k) $ and $$\psi( H^{d-r-2k_1}(E(2k_1+r+1)(-r-k_1)), H^{2m-d-r-2k_2}(E(2k_2+r+1))(-k_2))=0$$ for $k_1\neq k_2$. Then Fujisawa \cite{fujisawa} shows that $\langle\bullet,\bullet\rangle$ can be calculated in the following way: 
\begin{proposition} \label{Fu2}
We have $$\langle\tilde{\eta},\tilde{\xi}\rangle =(2\pi \sqrt{-1})^m\psi (\eta, \xi),$$ where $\eta,\ \xi$ are any representative of $\tilde{\eta},\ \tilde{\xi}$ at the ${}_{W^{St}}E_1$ page.  
\end{proposition}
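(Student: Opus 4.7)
The plan is to transport everything to Fujisawa's mixed Hodge complex $(K_{\Q},K_{\C},W,F)$, where the cup product and trace are defined at the level of complexes of sheaves, and then pass to the $E_1$ page of the weight spectral sequence, where the graded pieces become explicit complexes on the strata $E(l)$. The preceding lemma identifies $Q_K$ with $Q$, so it suffices to compute $\langle\bullet,\bullet\rangle$ using Fujisawa's product and trace. Concretely, I would unwind the product morphism $K_{\C}\otimes K_{\C}\to K_{\C}$ on the graded quotients $\textup{Gr}^W K_{\C}$: each graded piece is quasi-isomorphic, via the iterated Poincar\'e residue, to a twisted de Rham complex on a stratum $E(l)$ with a Tate twist, and Fujisawa's product descends to the ordinary exterior product of forms on the stratum, with an appropriate sign coming from the bidegree shuffle in his double complex.

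The first step is to write down explicit quasi-isomorphisms
\[
\textup{Gr}^W_{d+r-2k}K_{\C}[-(d-r-2k)]\;\simeq\;(a_{2k+r+1})_{*}\Omega^{\bullet}_{E(2k+r+1)}(-r-k),
\]
and verify that the induced pairing on the $E_1$ page
\[
{}_{W^{St}}E_1^{-r,d+r}\otimes {}_{W^{St}}E_1^{r,2m-d-r}\longrightarrow \mathbb{H}^{2m}(E,\textup{Gr}^W K_{\C})
\]
is, on the summand $H^{d-r-2k}(E(2k+r+1))(-r-k)\otimes H^{2m-d-r-2k}(E(2k+r+1))(-k)$, simply the exterior product followed by pushforward to $E(2k+r+1)$. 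Summands with different $k$-indices pair to zero because the residue map lands in different strata. The one sign that appears at this level is the Koszul/shuffle sign arising from the identification of $\textup{Gr}^W$ of Fujisawa's double complex with the residue complex; I expect this to contribute exactly a factor of $\epsilon(r+d-2m)$ (up to the universal factor that will be absorbed by the prefactor $\epsilon(d-2m)$ in $\langle\bullet,\bullet\rangle$).

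The second step is to compose with the trace. By the lemma already proved, the trace $\textup{Tr}:\mathbb{H}^{2m}(E,K_{\C})\to\C$ agrees with the trace coming from $\omega_E$, which in turn restricts on the smooth locus $U\subset E$ to integration. Since the graded pieces are supported on $E(2k+r+1)$ and embed into $\mathbb{H}^{2m}(E,K_{\C})$ via the natural map from the stratum, the composite of the exterior product with the trace is $(\text{sign})\cdot \int_{E(2k+r+1)}\eta\wedge\xi$. The Tate twist $(-r-k)\otimes(-k)=(-r-2k)$ on the source and the target twist $(-m)$ of the pairing combine to produce the factor $(2\pi\sqrt{-1})^{-(m-2k-r)}$, exactly matching the definition of $\psi$. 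The overall $(2\pi\sqrt{-1})^m$ on the right-hand side of the statement then comes from the normalization used in our description of $Q$ (where we normalize the relative trace map to restrict to $\int_{X_t}$ on the general fiber), versus Fujisawa's normalization of $\textup{Tr}$, which differs by the Tate twist $(-m)$.

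The main obstacle will be bookkeeping the signs and $(2\pi\sqrt{-1})$-factors. There are at least four sources: the Koszul sign from shuffling bidegrees in Fujisawa's double complex, the sign $\epsilon(d-2m)$ built into $\langle\bullet,\bullet\rangle$, the Tate twists on the summands of ${}_{W^{St}}E_1^{\pm r,\bullet}$ (both the twist that appears naturally from the residue and the reindexing $k\leftrightarrow l-r$ used to rewrite ${}_{W^{St}}E_1^{r,2m-d-r}$), and the discrepancy between the topological trace and the Serre-duality trace clarified in the lemma. Once the pairing is written summand-by-summand in the explicit form $\int_{E(2k+r+1)}\eta\wedge\xi$ and all four contributions are collected, they must recombine into the single factor $(2\pi\sqrt{-1})^m\cdot\epsilon(r+d-2m)/(2\pi\sqrt{-1})^{m-2k-r}$ appearing in the definition of $\psi$; checking this is straightforward but tedious. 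Finally, independence of the representatives at $E_1$ follows because $d_1$ is a morphism of mixed Hodge structures and the pairing at the $E_2=E_\infty$ page is well-defined by Proposition \ref{Fu} (4).
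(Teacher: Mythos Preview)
Your proposal is correct and follows essentially the same route as the paper, which simply cites Fujisawa: the paper's entire proof of Proposition~\ref{Fu2} is the sentence ``Proposition~\ref{Fu2} follows from \cite[Lemma 6.13]{fujisawa} and the computation in the proof of [ibid., Theorem 8.11].'' Your sketch is precisely an outline of what those two references contain---Lemma~6.13 identifies the product on the graded pieces of $K_{\C}$ with the exterior product on the strata via the residue isomorphisms, and the proof of Theorem~8.11 carries out the sign and $(2\pi\sqrt{-1})$-bookkeeping after composing with the trace---so you have reconstructed the cited argument rather than offered an alternative.
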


\begin{proof}[Proof of Proposition \ref{Fu} and Proposition \ref{Fu2}]
The statements (1), (2), (3), (4) in Proposition \ref{Fu} are proved in \cite[\S 7]{fujisawa}. Proposition \ref{Fu2} follows from \cite[Lemma 6.13]{fujisawa} and the computation in the proof of [ibid., Theorem 8.11]
\end{proof}

\begin{remark}
    The linear pairing $\psi$ is first defined by Guill\'en-Navarro Aznar \cite[(3.4)]{guillen} to construct the polarized Hodge-Lefschetz module structure under some strong K\"ahler type condition on the central fiber (or see \cite[\S 11.3.2]{PS}). Fujisawa \cite{fujisawa} studies the behavior of the weight filtration under the cup product structure and shows that such a linear pairing $\psi$ comes geometrically from the cup product structure on the limiting mixed Hodge structure.
\end{remark}

\begin{lemma} \label{rmkg} (cf. \cite[Lemma 1.4]{RF1})
   Let $X$ be a compact complex manifold of dimension $m$ for which the Hodge-de Rham spectral sequence degenerates at $E_1$. Then, the middle cohomology $H^m(X,\C)$ satisfies the first Hodge-Riemann condition with respect to the cup product.
\end{lemma}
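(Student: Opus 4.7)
The plan is to reduce the first Hodge--Riemann condition $Q(F^p H^m, F^{m-p+1} H^m)=0$ to two statements, both of which follow formally from the $E_1$ degeneration hypothesis: (i) the cup product is strictly compatible with the Hodge filtration, and (ii) the Hodge filtration on $H^{2m}(X,\C)$ is bounded above by $F^m$.

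First, I would use the standard fact that $E_1$ degeneration of the Hodge--de Rham spectral sequence is equivalent to the strictness statement
\[
F^p H^k(X,\C)=\mathbb{H}^k(X,\Omega_X^{\geq p}),
\]
where $\Omega_X^{\geq p}$ denotes the truncated holomorphic de Rham complex. Second, the cup product on $H^\bullet(X,\C)$ is induced at the complex level by the wedge product, which maps $\Omega_X^{\geq p}\otimes \Omega_X^{\geq q}\to \Omega_X^{\geq p+q}$; passing to hypercohomology yields $F^p\cdot F^q\subseteq F^{p+q}$.

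Finally, since $\dim_\C X=m$, the de Rham complex vanishes in degrees $>m$, so $\Omega_X^{\geq m+1}=0$, and therefore
\[
F^{m+1}H^{2m}(X,\C)=\mathbb{H}^{2m}(X,\Omega_X^{\geq m+1})=0.
\]
Combining this with the compatibility in the previous step, for any $\alpha\in F^p H^m(X,\C)$ and $\beta\in F^{m-p+1}H^m(X,\C)$ one obtains $\alpha\cup\beta\in F^{m+1}H^{2m}(X,\C)=0$, which is exactly the first Hodge--Riemann condition for the cup product $Q$.

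I do not anticipate any genuine obstacle: the argument is a direct formal consequence of $E_1$ degeneration and matches the classical proof in the compact K\"ahler setting (where the degeneration is automatic), as in \cite[Lemma 1.4]{RF1} cited in the statement. The only minor care needed is to phrase the strictness/identification $F^p H^k=\mathbb{H}^k(\Omega_X^{\geq p})$ correctly as a consequence of degeneration at $E_1$.
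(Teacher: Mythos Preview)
Your argument is correct and is exactly the standard one (the paper does not supply its own proof, only the citation to \cite{RF1}). One minor observation: since the Hodge filtration is by definition the \emph{image} of $\mathbb{H}^k(\Omega_X^{\geq p})$ in $H^k(X,\C)$, both the multiplicativity $F^p\cdot F^q\subseteq F^{p+q}$ and the vanishing $F^{m+1}H^{2m}(X,\C)=0$ hold for any compact complex manifold, so the $E_1$ degeneration hypothesis is not actually needed for this particular lemma---your invocation of it is harmless but superfluous.
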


Now we can prove Theorem \ref{Corg}:

\begin{proof}[Proof of Theorem \ref{Corg}]
By Theorem \ref{St}, there are holomorphic subbundles $\Tilde{\mathcal{F}}^{\bullet}$ of $\Tilde{\mathcal{H}}^{d}$ that correspond to the Hodge subbundles $\mathcal{F}^{\bullet}$ of $\mathcal{H}^{d}$ under Deligne's canonical extension. As a result, by taking $S(\bullet, \bullet )=\epsilon(m)Q(\bullet,\bullet)$ and using Theorem \ref{main}, Proposition \ref{Fu} and Lemma \ref{rmkg}, Theorem \ref{Corg} follows.
\end{proof}

 As a consequence, if we assume some strong K\"ahler type condition on the central fiber, then the general fiber satisfies the $\partial\overline{\partial}$-lemma, and the Hodge index on the middle cohomology of general fiber is the same as compact K\"ahler manifolds:

\begin{corollary}\label{kah}
 Let $f: X\rightarrow\Delta$ be a semistable degeneration of $m$-dimensional compact complex manifolds. Suppose that there exists a class $L\in H^2(X_0,\R)$ which restricts to a K\"ahler class on each component of the normal crossing complex analytic space $X_0$. Then the general fiber $X_q$ satisties the $\partial\overline{\partial}$-lemma for $|q|$ sufficiently small. Moreover, if we let $h^{a,b}_{q}=\textup{dim}_{\C}H^{b}(X_q,\Omega_{X_q}^a)$, then the signature of $S(C\bullet,\overline{\bullet})$ on the $(p,m-p)$-component of the Hodge structure on $H^m(X_q,\C)$ is $$\left(\sum_{r\geq 0}(h^{p-2r,m-p-2r}_q-h^{p-2r-1,m-p-2r-1}_q) ,\sum_{r\geq 0}(h^{p-2r-1,m-p-2r-1}_q-h^{p-2r-2,m-p-2r-2}_q)\right)$$ for $|q|$ small. 
 
 In particular, when $m$ is even, the general fiber $X_q$ satisfies the Hodge index theorem for compact K\"ahler manifolds: the signature of the cup product $Q(\bullet,\bullet)$ on $H^m(X_q,\R)$ is $\sum_{a,b\geq 0}(-1)^{a}h^{a,b}_q$ for $|q|$ small.
\end{corollary}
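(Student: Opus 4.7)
The plan is to deduce the corollary from Theorem \ref{Corg} via Guill\'en--Navarro Aznar's polarized Hodge--Lefschetz structure. Under the strong K\"ahler hypothesis, Guill\'en--Navarro Aznar (cf.\ \cite[\S 11.3]{PS}) equip ${}_{W^{St}}E_1$ with commuting $\mathfrak{sl}_2$-actions generated by $L$ and $N$, producing a polarized Hodge--Lefschetz module. Two consequences follow: first, $W^{St}=W(N,d)$ on $H^d(X_\infty,\C)$ in every degree, so Situation (\textbf{A}) holds everywhere, and applying Theorem \ref{Corg}(1) immediately gives part (1); second, by compatibility of the Guill\'en--Navarro pairing with Fujisawa's cup product (Propositions \ref{Fu}, \ref{Fu2}), $(H^m_{\lim},W^{St},F_{\lim})$ is polarized with respect to $S(\bullet,\bullet)$, so in Situation (\textbf{B}) we have $s^{p,r}_-=0$ for all $p,r$.

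For (2) I apply Theorem \ref{Corg}(2). The vanishing $s^{p,r}_-=0$ does not by itself make the signature on $H^{p,m-p}(X_q)$ purely positive, because $\mathbf{S}^{p,k}_\pm$ collects contributions from the full $N$-Lefschetz decomposition $\textup{Gr}^{W^{St}}_{m+l}H^m_{\lim}=\bigoplus_{r\geq (-l)_+} N^r P_{m+l+r}$, and each factor $N$ flips the sign of $S(C\bullet,\overline{\bullet})$ by $-1$. The commuting operator $L$ refines each $N$-primitive piece $P_{m+l}$ by an $L$-Lefschetz decomposition, and Propositions \ref{Fu} and \ref{Fu2} identify the simultaneously $N$- and $L$-primitive parts with pieces of the $L$-primitive cohomology of the strata $E(k)$, where the classical K\"ahler Hodge--Riemann relations apply. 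Assembling the contributions of the double Lefschetz decomposition and identifying the bi-graded dimensions of $H^m_{\lim}$ with the Hodge numbers $h^{a,b}_q$ of the nearby fiber, via the $E_1$-degeneration in Theorem \ref{St}(3), reduces the problem to the combinatorial identity $\dim H^{a,b}_{(N,L)\text{-prim}} = h^{a,b}_q - h^{a-1,b-1}_q$. This unfolds to the alternating sums $\sum_{r\geq 0}(h^{p-2r,m-p-2r}_q-h^{p-2r-1,m-p-2r-1}_q)$ for the positive component and $\sum_{r\geq 0}(h^{p-2r-1,m-p-2r-1}_q-h^{p-2r-2,m-p-2r-2}_q)$ for the negative component, which is exactly the K\"ahler-type formula displayed in the remark after Theorem \ref{kah'}.

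The main technical obstacle is the sign and weight bookkeeping across the two Lefschetz decompositions; however, this is precisely the computation that produces the K\"ahler Hodge index formula on a genuine compact K\"ahler manifold of dimension $m$, so the polarized Hodge--Lefschetz structure allows us to transport it to our situation. Finally, part (3) follows from (2) when $m$ is even by summing $(-1)^p(s^{p,m-p}_+-s^{p,m-p}_-)$ over $p$: the double alternating sum telescopes to $\sum_{a,b\geq 0}(-1)^a h^{a,b}_q$, recovering the classical K\"ahler Hodge index formula for the cup product $Q(\bullet,\bullet)$ on $H^m(X_q,\R)$.
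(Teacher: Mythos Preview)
Your overall strategy matches the paper's: invoke Guill\'en--Navarro Aznar to get both $W^{St}=W(N,d)$ and the polarized Hodge--Lefschetz module structure on ${}_{W^{St}}E_1$, then feed into Theorem \ref{Corg}. Part (1) and the general shape of parts (2)--(3) are fine.

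However, there is a genuine confusion in your treatment of (2). You assert that the polarized Hodge--Lefschetz structure forces $s^{p,r}_-=0$ for all $p,r$, and then try to recover the negative part of the signature from ``each factor $N$ flips the sign of $S(C\bullet,\overline{\bullet})$ by $-1$'' in the $N$-Lefschetz decomposition $\textup{Gr}^{W^{St}}_{m+l}H^m_{\lim}=\bigoplus_{r\geq (-l)_+}N^rP_{m+l+r}$. Both claims are wrong. By definition, $\mathbf{S}^{p,m+l-p}_-=\sum_{r\geq(-l)_+}s^{p+r,m+l-p+r}_-$ is a straight sum with no sign alternation, so if all $s_-$ vanished then all $\mathbf{S}_-$ would vanish and Theorem \ref{Corg}(2) would give a purely positive signature---contradicting the statement you are proving. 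The point is that the polarized Hodge--Lefschetz condition polarizes only the doubly primitive (simultaneously $N$- and $L$-primitive) pieces; the $N$-primitive space $P_{m+l}$ itself carries an $L$-Lefschetz decomposition $P^{p,s}_m=\bigoplus_{r\geq 0}L^r(P^{p-r,s-r}_{m-2r})_0$, and it is the $L^r$ factor (not the $N^r$ factor) that flips the sign: the paper checks directly via Proposition \ref{Fu2} that $S(C\bullet,(-N)^{p+s-m}\overline{\bullet})$ is $(-1)^r$-definite on $L^r(P^{p-r,s-r}_{m-2r})_0$. Thus $s^{p,s}_-$ is genuinely nonzero in general, equal to $\sum_{r\geq 0}\dim(P^{p-2r-1,s-2r-1}_{m-4r-2})_0$.

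Once this is corrected, your later remarks about the double Lefschetz decomposition and the identity relating doubly-primitive dimensions to $h^{a,b}_q-h^{a-1,b-1}_q$ are essentially the paper's computation: one sums $\sum_{l,r}\dim(P^{p+r-a,m+l-p+r-a}_{m-2a})_0$ over the $N$-Lefschetz decomposition of $\textup{Gr}^{W^{St}}_{m+l-2a}H^{m-2a}_{\lim}$ and uses $E_1$-degeneration to identify this with $h^{p-a,m-p-a}_q-h^{p-a-1,m-p-a-1}_q$. So the fix is to drop the claim $s_-=0$, locate the sign alternation in the $L$-decomposition of $P_{m+l}$ rather than in the $N$-decomposition of $\textup{Gr}^{W^{St}}$, and then run the dimension count you sketched.
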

\begin{proof}
By \cite{guillen} (or cf. \cite[\S 11.3.2]{PS}), under the assumption that there exists a class $L\in H^2(X_0,\R)$ which restricts to a K\"ahler class on each component of the normal crossing complex analytic space $X_0$, there is a polarized Hodge-Lefschetz module structure on ${}_{W^{St}}E^{-r,q+r}_1\otimes \R$. As a consequence, we have $W^{St}=W(N,d)$ on $H^d(X_{\infty},\Q)$ for all $d\geq 0$ (cf. \cite[Theorem 11.40]{PS}). Hence, by Theorem \ref{Corg}, $X_q$ satisfies the $\partial\overline{\partial}$-lemma for $|q|$ small. 

On the other hand, Proposition \ref{Fu2} asserts that the cup product on $H^m(X_{\infty})$ restricts to the pairing $\psi$ on grading pieces, which coincides with the one used in \cite{guillen} to define the polarized Hodge-Lefchetz module structure on ${}_{W^{St}}E^{-r,d+r}_1\otimes \R$. So, we can compute the Hodge index via the Hodge-Lefchetz decomposition. To be precise, for $p+s\geq d$, let $P^{p,s}_d$ be the $(p,s)$-component of the Hodge structure of the $N$-primitive space $$\textup{ker}\left(N^{p+s-d+1}:\text{Gr}^{W^{St}}_{p+s}H^d(X_{\infty},\C)\rightarrow \text{Gr}^{W^{St}}_{2d-p-s-2}H^d(X_{\infty},\C)\right),$$ and for $d\leq m$, we define the $L$-primitive subspace of $P^{p,s}_d$ by $$(P^{p,s}_d)_0:=P^{p,s}_d\cap \textup{ker}(L^{m-d+1}).$$ Then $P^{p,s}_m$ has a Lefschetz decomposition: $$P^{p,s}_m=\bigoplus_{r\geq 0}L^r(P^{p-r,s-r}_{m-2r})_0.$$ We claim that the Hermitian form $S(C\bullet,(-N)^{p+s-m}\overline{\bullet})$ on $L^r(P^{p-r,s-r}_{m-2r})_0$ is positive definite if $r$ is even and is negative definite if $r$ is odd. To see this, let $x\in (P^{p-r,s-r}_{m-2r})_0$, and then we have $$(-1)^r S(CL^rx,(-N)^{p+s-m}L^r\overline{x})$$ $$=(-1)^r (-1)^m \langle CL^rx,(-N)^{p+s-m}L^r\overline{x}\rangle$$ $$=(-1)^r (-1)^m (2\pi \sqrt{-1})^m \psi(CL^rx,(-N)^{p+s-m}L^r\overline{x})$$  $$=(-1)^r (-1)^m (2\pi \sqrt{-1})^m (-1)^{p+s}\psi(L^rx,C(-N)^{p+s-m}L^r\overline{x})$$
$$=(-1)^r (-1)^m (2\pi \sqrt{-1})^m (-1)^{p+s} (-1)^r\psi(x,L^rC(-N)^{p+s-m}L^r\overline{x})$$
$$=(2\pi\sqrt{-1})^m\psi(x,CN^{p+s-m}L^{2r}\overline{x})> 0.$$ The last inequality follows from the definition of polarized Hodge-Lefschetz module in \cite[(4.3)]{guillen}. 

As a result, the signature of $S(C\bullet,(-N)^{p+s-m}\overline{\bullet})$ on $P^{p,s}_m$ is $$\left(\sum_{r\geq 0}\textup{dim}_{\C}(P^{p-2r,s-2r}_{m-4r})_0,\sum_{r\geq 0}\textup{dim}_{\C}(P^{p-2r-1,s-2r-1}_{m-4r-2})_0\right).$$ Then by Theorem \ref{Corg}, for $|q|$ small, the signature of $S(C\bullet,\overline{\bullet})$ on the $(p,m-p)$-component of the Hodge structure on $H^m(X_q,\C)$ is $$\left(\sum_{\substack{l\geq p-m, \\ r\geq(-l)_{+}} }\sum_{s\geq 0}\textup{dim}_{\C}(P^{p+r-2s,m+l-p+r-2s}_{m-4s})_0, \ \sum_{\substack{l\geq p-m, \\ r\geq(-l)_{+}} }\sum_{s\geq 0}\textup{dim}_{\C}(P^{p+r-2s-1,m+l-p+r-2s-1}_{m-4s-2})_0\right).$$ Since for all $a\geq 0$ and $l\geq p-m$, we have the decomposition $$\textup{Gr}_{F}^{p-a}Gr^{W^{St}}_{m+l-2a}H^{m-2a}(X_{\infty},\C)=\bigoplus_{r\geq (-l)_{+}}N^rP^{p+r-a,m+l-p+r-a}_{m-2a},$$ we can see that $$\sum_{\substack{l\geq p-m, \\ r\geq(-l)_{+}} }\textup{dim}_{\C}(P^{p+r-a,m+l-p+r-a}_{m-2a})_0$$
$$=\sum_{l\geq p-m}\textup{dim}_{\C}\textup{Gr}_{F}^{p-a}\left( Gr^{W^{St}}_{m+l-2a}H^{m-2a}(X_{\infty},\C)\right)_0$$
$$=\textup{dim}_{\C}\textup{Gr}_{F}^{p-a}H^{m-2a}(X_{\infty},\C)-\textup{dim}_{\C}\textup{Gr}_{F}^{p-a-1}H^{m-2a-2}(X_{\infty},\C)$$
$$=h^{p-a,m-p-a}_{q}-h_q^{p-a-1,m-p-a-1}$$ for $|q|$ small. Hence, by summing, the signature of $S(C\bullet,\overline{\bullet})$ on the $(p,m-p)$-component of the Hodge structure on $H^m(X_q,\C)$ is $$\left(\sum_{r\geq 0}(h^{p-2r,m-p-2r}_q-h^{p-2r-1,m-p-2r-1}_q) ,\sum_{r\geq 0}(h^{p-2r-1,m-p-2r-1}_q-h^{p-2r-2,m-p-2r-2}_q)\right)$$ for $|q|$ small. 
The last statement in the theorem is a standard consequence of the signature $S(C\bullet,\overline{\bullet})$ on the middle cohomology we just computed. The only thing to notice is that, although we don't have the Hard Lefschetz theorem on the general fiber $X_q$, we still have $h^{p,r}_q=h^{m-p,m-r}_q$ for $|q|$ small since $$h^{p,r}_q=\textup{dim}_{\C}\textup{Gr}_{F}^{p}H^{p+r}(X_{\infty},\C)=\textup{dim}_{\C}\textup{Gr}_{F}^{m-p}H^{2m-p-r}(X_{\infty},\C)=h^{m-p,m-r}_q$$ for $|q|$ small by the existence of $L$ on the central fiber.

\end{proof}

\section{Examples}

\subsection{ A non-example for degeneration of non-K\"ahler surfaces}

In \cite{kodaira}, Kodaira constructs an example of the degeneration of Hopf surfaces into a rationally ruled surface with an ordinary double curve. To be precise, for all $m\geq 1$, there exists a one-parameter complex analytic family $X\rightarrow \Delta$ such that $X_t$ is a Hopf surface for $0\neq t\in \Delta$ and $X_0$ can be identified with the quotient of a Hirzebruch surface $F_m$ by gluing the zero section and the infinity section together in the normal crossing way, from which the gluing becomes an ordinary double curve in $X_0$. In this case, $H^1(X_t)$ is one dimensional and hence doesn't have Hodge decomposition. We can check that Steenbrink weight filtration $W^{St}$ and the monodromy weight filtration $W(N,1)$ are different on $H^1(X_{\infty},\C)$, which gives a non-example to Theorem \ref{Corg} (1). 

To see this, we first describe the semistable model of $X\rightarrow \Delta$.  Let $Y$ be the blowup of $X$ along the ordinary double curve. Consider the double covering of $\Delta$ defined by $q\mapsto q^2$, and let $\tilde{Y}\rightarrow \Delta $ be the pullback family. Then $\Tilde{Y}_0$ is normal crossings, all of whose components are reduced. $\Tilde{Y}_0$ can also be described as follows: Consider two copies $S_1$, $S_2$ of the Hirzebruch surface $F_m$ and let $(D_0)_i$ and $(D_{\infty})_i$ be the zero section and the infinity section of $S_i$ for $i=1,2$. Here, we have $(D_0)_i^2=-m$ and $(D_{\infty})^2_i=m$. Then $$\Tilde{Y}_0\cong S_1 \amalg  S_2/ \sim,$$ where the equivalence is by identifying $(D_0)_1$ with $(D_{\infty})_2$ and $(D_0)_2$ with $(D_{\infty})_1$ via some automorphism of $\mathbb{P}^1$. We will denote $D_1=(D_0)_1=(D_{\infty})_2$ and $D_2=(D_0)_2=(D_{\infty})_1$ as curves in $\Tilde{Y}_0$

Now we can compute Steenbrink's limiting mixed Hodge structure associated to the family $\tilde{Y}\rightarrow \Delta $, Note that ${}_{W^{St}}E_1^{-r,d+r}=0$ for $|r|\geq 2$ and any $d$. Also, we have $$Gr^{W^{St}}_{0}H^1_{\lim}={}_{W_St}E^{1,0}_2=\textup{coker}\left(H^0(S_1)\oplus H^0(S_2)\xrightarrow{\theta} H^0(D_1)\oplus H^0(D_2)\right)\cong \Q,$$ $$Gr^{W^{St}}_{1}H^1_{\lim}={}_{W_St}E^{0,1}_2=\textup{ker}\left(H^1(S_1)\oplus H^1(S_2)\xrightarrow{\theta} H^1(D_1)\oplus H^1(D_2)\right)=0,$$ and $$Gr^{W^{St}}_{2}H^1_{\lim}={}_{W_St}E^{-1,2}_2=\textup{ker}\left(H^0(D_1)(-1)\oplus H^0(D_2)(-1)\xrightarrow{-\gamma} H^2(S_1)\oplus H^2(S_2)\right)= 0.$$
Thus $W^{St}$ is different from the monodromy weight filtration $W(N,1)$ on $H^1_{\lim}.$

\subsection{ Ordinary Double Points}

 In this section, we consider the global smoothing of a $m$-dimensional compact complex analytic space $X_0$ with at most ordinary double points. In this case, we assume that $X_0$ admits a resolution of singularities that satisfies the $\partial \overline{\partial}$-lemma. This condition is equivalent to that every resolution of singularities of $X_0$ satisfies the $\partial \overline{\partial}$-lemma since $X_0$ has only isolated singularities. Let $\{x_i|1\leq i\leq l\}$ be the set of ordinary double points on $X_0$ and $\tilde{X_0}$ be the blowup of $X_0$ at all ordinary double points. The exceptional divisors $Q_i$ in $\tilde{X_0}$ over ordinary points $x_i$ are $m-1$-dimensional quadrics. If $m$ is odd, each $Q_i$ is an even-dimensional quadric with standard $\frac{m-1}{2}$-dimensional planes whose homology classes will be denoted by $A_i$ and $B_i$. Define the number $$R=\textup{dim}_{\C}\textup{ker}\left(\bigoplus_{i=1}^l H^{m-1}_{\textup{prim}}(Q_i)\xrightarrow{-\gamma} H^{m+1}(\tilde{X_{0}})\right),$$ where $\gamma$ is the Gysin morphism. The number $R$ represents the number of linear relations of $A_i-B_i$ in $H_{m-1}(\tilde{X_{0}})$. If $m$ is even, consider the kernel of the restriction map $$V^m:=\textup{ker}\left(H^{m}(\tilde{X_0})\rightarrow\bigoplus_{i=1}^{l}H^{m}(Q_i) \right),$$ which has dimension $h^{m}(\tilde{X_0})-l$ if $X_0$ admits a global smoothing.

As a generalization of \cite{RF1} and \cite{CL} for the existence of polarized Hodge structure on the middle cohomology of Clemens threefolds, we have the following theorem:

\begin{theorem} \label{thmodp}
    Let $f: X\rightarrow \Delta$ be a one-parameter degeneration of $m$-dimensional compact complex manifolds such that the central fiber $X_0$ has at worst ordinary double points. Suppose that $X_0$ admits a resolution of singularities that satisfies the $\partial \overline{\partial}$-lemma. Then the nearby fiber $X_q$ satisfies the $\partial \overline{\partial}$-lemma for $|q|$ small.

    Let $\left(h^{k,m-k}_+(\tilde{X_0}),\ h^{k,m-k}_{-}(\tilde{X_0})\right)$ be the signature of $S(C\bullet,\overline{\bullet})$ on $H^{k,m-k}(\tilde{X_0})$. When $m$ is even, let $\left(\hat{h}^{m/2,m/2}_+(\tilde{X_0}),\ \hat{h}^{m/2,m/2}_{-}(\tilde{X_0})\right)$ be the signature of $S(C\bullet,\overline{\bullet})$ on the $(m/2,m/2)$ part of $V^m$. 
    \begin{enumerate}
        \item If $m$ is odd, then the signature of $S(C\bullet,\overline{\bullet})$ on $H^{k,m-k}(X_q)$ is $$\left\{\begin{array}{cc}
        \left(h^{k,m-k}_+(\tilde{X_0}),\ h^{k,m-k}_{-}(\tilde{X_0})\right),   & k\neq \frac{m+1}{2},\frac{m-1}{2}\\
          \left(h^{k,m-k}_+(\tilde{X_0})+R,\ h^{k,m-k}_{-}(\tilde{X_0})\right),   & k= \frac{m+1}{2}\textup{ or }\frac{m-1}{2}
        \end{array}\right.$$ for $|q|$ sufficiently small.
    \item If $m$ is even, then the signature of $S(C\bullet,\overline{\bullet})$ on $H^{k,m-k}(X_q)$ is   $$\left\{\begin{array}{cc}
        \left(h^{k,m-k}_+(\tilde{X_0}),\ h^{k,m-k}_{-}(\tilde{X_0})\right),   & k\neq \frac{m}{2} \\
          \left(\hat{h}^{m/2,m/2}_+(\tilde{X_0})+l,\ \hat{h}^{m/2,m/2}_{-}(\tilde{X_0})\right),   & k= \frac{m}{2}
        \end{array}\right.$$ for $|q|$ sufficiently small. 
    \end{enumerate} 
    
     In particular, if $H^m(\tilde{X_0},\C)$ together with the cup product $S(\bullet,\bullet)$ is a polarized Hodge structure, then $H^m(X_q,\C)$ together with the cup product $S(\bullet,\bullet)$ is a polarized Hodge structure as well for $|q|$ sufficiently small.
\end{theorem}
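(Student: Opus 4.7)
The plan is to reduce Theorem \ref{thmodp} to Theorem \ref{Corg} after passing to a semistable model of $f$. The local model of a smoothing of an ODP is $\{\sum_{j=1}^{m+1} z_j^2 = t\}\subset \C^{m+2}$, which is smooth near the ODP but on which the function $t$ vanishes to order two, so a direct blowup of $x_i$ in $X$ does not produce a semistable model. Instead I would first perform the degree-two base change $t=s^2$ to obtain $X' = X\times_{\Delta}\Delta_s$; locally near each $x_i$ this acquires an isolated ODP modeled by $\{\sum z_j^2 - s^2 = 0\}\subset \C^{m+2}$. Blowing up these singular points of $X'$ yields a smooth total space $\tilde X'\to\Delta_s$ whose central fiber is $\tilde X_0 \cup \bigsqcup_{i=1}^l Q'_i$, where $Q'_i\subset \mathbb{P}^{m+1}$ is the $m$-dimensional smooth projective quadric arising as the projectivized tangent cone and $\tilde X_0$ is the given resolution. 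Since $s$ has order one at each ODP of $X'$, both components appear with multiplicity one and the model is semistable; one also checks $\tilde X_0\cap Q'_i = Q_i$ and that triple intersections are empty. The base change only replaces $N$ by $2N$, preserving both the weight filtration and the Hodge filtration, while $\tilde X'_s\cong X_{s^2}$ for $s\ne 0$, so it suffices to prove the conclusions for $\tilde X'\to \Delta_s$.

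To invoke Theorem \ref{Corg} I would verify its hypotheses. Each stratum of the central fiber of $\tilde X'$ satisfies the $\partial\overline{\partial}$-lemma: $\tilde X_0$ by assumption (and, since $X_0$ has isolated singularities, independently of the choice of resolution), and $Q'_i, Q_i$ because they are smooth projective quadrics. Next, Situation (\textbf{A}) must hold for every $d$: since only two components meet pairwise, $N^2=0$ on each $H^d_{\textup{lim}}$, and (\textbf{A}) reduces to
\[
\dim \textup{Gr}^{W^{St}}_{d+1} H^d_{\textup{lim}} \;=\; \dim \textup{Gr}^{W^{St}}_{d-1} H^d_{\textup{lim}}
\]
for each $d$. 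From the weight spectral sequence these are the kernel of the alternating Gysin $\gamma: H^{d-1}(D)(-1)\to H^{d+1}(\tilde X_0)\oplus \bigoplus_i H^{d+1}(Q'_i)$ and the cokernel of the alternating restriction $\theta: H^{d-1}(\tilde X_0)\oplus \bigoplus_i H^{d-1}(Q'_i)\to H^{d-1}(D)$, where $D=\bigsqcup_i Q_i$. For $d\ne m$, the only possibly nonzero classes in $H^{d-1}(Q_i)$ are the hyperplane powers $h^{(d-1)/2}|_{Q_i}$, and $\gamma_{Q'_i}$ sends such a class to $h^{(d+1)/2}\in H^{d+1}(Q'_i)$, which is a nonzero generator; hence $\gamma$ is injective, both graded pieces vanish, and the MHS on $H^d_{\textup{lim}}$ is pure of weight $d$, so (\textbf{A}) holds trivially.

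For $d=m$, when $m$ is even one has $H^{m-1}(Q_i)=0$ and both graded pieces again vanish. When $m$ is odd, $H^{m-1}(Q_i)=\mathbb{C}\cdot h^{(m-1)/2}|_{Q_i}\oplus \mathbb{C}\cdot P_i$ with $P_i$ dual to $A_i-B_i$; the non-primitive generator satisfies $\gamma_{Q'_i}(h^{(m-1)/2}|_{Q_i})=h^{(m+1)/2}\ne 0$ in $H^{m+1}(Q'_i)=\mathbb{C}$ while $\gamma_{Q'_i}(P_i)=0$, and the non-primitive generator (but not $P_i$) lies in the image of restriction from $Q'_i$. A short chase identifies
\[
\textup{Gr}^{W^{St}}_{m+1} H^m_{\textup{lim}} \;\cong\; \ker\Bigl(\bigoplus_i \mathbb{C}\cdot P_i \xrightarrow{\gamma_{\tilde X_0}} H^{m+1}(\tilde X_0)\Bigr),
\]
which has dimension $R$. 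By Poincaré duality on $\tilde X_0$, which turns the restriction $H^{m-1}(\tilde X_0)\to \bigoplus_i\mathbb{C}\cdot P_i$ into the Gysin above, $\textup{Gr}^{W^{St}}_{m-1}H^m_{\textup{lim}}$ is canonically isomorphic to the same space, and (\textbf{A}) is verified.

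The $\partial\overline{\partial}$-assertion then follows from Theorem \ref{Corg}(1) applied for every $d$. For the signature I invoke Theorem \ref{Corg}(2) together with Propositions \ref{Fu} and \ref{Fu2}. For $m$ odd, $P_i$ has Hodge type $\bigl(\tfrac{m-1}{2},\tfrac{m-1}{2}\bigr)$, so after the Tate twist it lies in the $\bigl(\tfrac{m+1}{2},\tfrac{m+1}{2}\bigr)$-piece of $\textup{Gr}^{W^{St}}_{m+1}H^m_{\textup{lim}}$, contributing to the signature of $H^{(m\pm1)/2,(m\mp1)/2}(X_q)$. Fujisawa's pairing $\psi$ reduces the Hermitian form to a multiple of $\int_{Q_i}(A_i-B_i)^2$; the quadric intersection calculation yields $(A_i-B_i)^2=(-1)^{(m-1)/2}\cdot 2$, and combining this with the factor $\epsilon(m)/(2\pi\sqrt{-1})^{m-1}$ from Proposition \ref{Fu2} and the action of $C$ produces a positive definite Hermitian form for every odd $m$, placing all $R$ classes into $s_+$. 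For $m$ even, the MHS on $H^m_{\textup{lim}}$ is pure of weight $m$; an analogous analysis of $\textup{Gr}^{W^{St}}_m H^m_{\textup{lim}}$ via the spectral sequence extracts exactly $l$ additional $(m/2,m/2)$-classes from $\bigoplus_i H^m(Q'_i)$ (one ruling per quadric, the other being consumed by the Gysin from $H^{m-2}(D)(-1)$), whose Hermitian form is again positive, matching the extra $+l$ in the statement. The final polarization claim follows directly from the signature formula.

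The main obstacle is the sign chase in the last step: one must carefully track the factor $\epsilon(m)$ relating $S$ to $Q$, the Weil operator $C$, the sign discrepancy between Steenbrink's and Schmid's nilpotent operators $N$, the Tate twist factors on $\textup{Gr}^{W^{St}}_{m\pm 1}$, and the factor $\epsilon(r+d-2m)/(2\pi\sqrt{-1})^{m-2k-r}$ in Fujisawa's $\psi$, and check that they conspire to place the $R$ (resp.\ $l$) extra classes into $s_+$ rather than $s_-$ for every odd (resp.\ even) $m$. Once the conventions are aligned, the remaining arguments are dimension counts using the weight spectral sequence and the classical cohomology of smooth quadrics.
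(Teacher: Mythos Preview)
Your proposal is correct and follows essentially the same route as the paper: construct the semistable model with central fiber $\tilde X_0\cup\bigsqcup_i E_i$ glued along the $Q_i$ (the paper blows up $X$ first and then base-changes, but the result is the same), verify (\textbf{A}) via the Lefschetz hyperplane theorem on the quadrics together with the duality between the Gysin and restriction maps, and compute the signature on each graded piece using Proposition~\ref{Fu2} and the primitivity of $A_i-B_i$. One small imprecision: (\textbf{A}) does not literally reduce to equality of dimensions but to the map $N:\textup{Gr}^{W^{St}}_{d+1}\to\textup{Gr}^{W^{St}}_{d-1}$ (induced by the identity on the $E_1$ page) being an isomorphism; the paper secures this for $d=m$ odd by observing that the kernel of $\gamma$ and the image of $\theta$ in $\bigoplus_i\C\cdot P_i$ are orthogonal under the standard pairing, which is exactly your Poincar\'e-duality remark made precise.
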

\begin{proof}
The computation below is similar to the proof of Theorem 2.12 in \cite{RF1} for Calabi-Yau threefolds with ODPs. We first describe a semistable model of the family. Let $\{x_i|1\leq i\leq l\}$ be the set of ordinary double points on $X_0$ and let $Y$ be the blowup of $X$ along $\{x_i|1\leq i\leq l\}$. Consider the double covering of $\Delta$ defined by $q\mapsto q^2$, and let $\tilde{Y}\rightarrow \Delta $ be the pullback family. Then $\Tilde{Y}_0$ is normal crossings, all of whose components are reduced. $\Tilde{Y}_0$ can also be described as follows: Let $\Tilde{X}_{0}$ be the blowup of $X_0$ along the ordinary double points $\{x_i\}$. Then $\Tilde{X}_{0}$ satisfies the $\partial\overline{\partial}$-lemma by our assumption. The preimage of each $x_i$ in $\Tilde{X}_{0}$ is a quadric $(m-1)$-fold $Q_i$ in $\mathbb{P}^m$. Let $E_i$ be a quadric $m$-fold in $\mathbb{P}^{m+1}$. Then $$\Tilde{Y}_0\cong\Tilde{X}_0 \amalg \coprod_{i\in I} E_i/ \sim,$$ where the equivalence is by identifying each $Q_i$ with some hyperplane section in $E_i$. 

Consider Steenbrink's limiting Hodge structure associated to the family $\tilde{Y}\rightarrow \Delta $. By Theorem \ref{main}, to show that $X_q$ satisfies the $\partial\overline{\partial}$-lemma, we need to prove that $W^{St}$ coincides with $W(N,d)$ on the cohomology $H^d(X_{\infty})$ for all $d$.

Note that ${}_{W^{St}}E_1^{-r,d+r}=0$ for $|r|\geq 2$ and any $d$. Thus, by our discussion in the previous section, to show that $W^{St}=W(N,d)$ on $H^d(X_{\infty})$, it suffices to show that $${}_{W^{St}}E_2^{-1,d+1}=\textup{ker}\left(\bigoplus_{1\leq i\leq l}H^{d-1}(Q_i)(-1)\xrightarrow{-\gamma} H^{d+1}(\Tilde{X}_{0})\oplus\bigoplus_{1\leq i\leq l} H^{d+1}(E_i) \right)$$ is isomorphic via the map induced by $(2\pi \sqrt{-1})I$ to $${}_{W^{St}}E_2^{1,d-1}=\textup{coker}\left( H^{d-1}(\Tilde{X}_{0})\oplus\bigoplus_{1\leq i\leq l} H^{d-1}(E_i)\xrightarrow{\theta}\bigoplus_{1\leq i\leq l}H^{d-1}(Q_i)\right).$$ By the Lefschetz hyperplane theorem, the Gysin morphism $H^{d-1}(Q_i)(-1)\rightarrow H^{d+1}(E_i)$ is injective unless $d=m$ and the restriction map  $H^{d+1}(E_i)\rightarrow H^{d+1}(Q_i)$ is surjective unless $d=m$. Therefore, the assertion is obvious for $d\neq m$.

Now, we consider the case $d=m$. If $m$ is even, then each $Q_i$ is an odd-dimensional quadric, and hence the middle cohomology $H^{m-1}(Q_i)$ is zero. So, the assertion is trivial. If $m$ is odd, then $Q_i$ is an even-dimensional quadric with standard $\ \frac {m-1}{2}$-dimensional planes whose homology classes are denoted by $A_i$ and $B_i$. Note that the restriction map $  H^{m-1}(E_i)\xrightarrow{\theta}H^{m-1}(Q_i)$ sends the $(\frac{m-1}{2})$-th intersection of the hyperplane class $[Q_i]$ to $A_i+B_i$ and the kernel of the Gysin map $H^{m-1}(Q_i)(-1)\xrightarrow{-\gamma} H^{m+1}(E_i)$ consists of multiples of the class $A_i-B_i$. Thus we may identify ${}_{W^{St}}E_2^{-1,m+1}$ as $$\left\{(m_1,...,m_{l})\in \Q(-1)^{l} \,\middle\vert\, \sum_{i=1}^l m_i (A_i-B_i)=0 \textup{ in } H^{m+1}(\Tilde{X}_0) \right\}$$
and ${}_{W^{St}}E_2^{1,m-1}$ as the quotient of $\Q^{l}$ by the subspace $$\left\{\left(\xi\cdot(A_1-B_1),..., \xi\cdot(A_l-B_l)\right) \,\middle\vert\, \xi\in H^{m-1}(\tilde{X}_0)\right\}.$$ The two subspaces are orthogonal under the standard inner product, so the map ${}_{W^{St}}E_2^{-1,m+1} \rightarrow {}_{W^{St}}E_2^{1,m-1}$ is injective. Note also that ${}_{W^{St}}E_2^{-1,m+1}$ and $ {}_{W^{St}}E_2^{1,m-1}$ are dual vector spaces since the Gysin map can be identified with the pushforward morphism of homology, which is dual to the pullback morphism of cohomology. In particular, they have the same dimension, and the surjectivity also follows. In conclusion, we have shown that ${}^{St}W=W(N,m)$ on the middle cohomology $H^{m}(X_{\infty})$, and hence by Theorem \ref{Corg}, the middle cohomology of the nearby fiber $H^m(X_q)$ admits Hodge decomposition for $|q|$ sufficiently small.

To compute the Hodge index on the middle cohomology of the general fiber, by Theorem \ref{Corg}, it suffices to compute the signature of $S(C\bullet, \overline{(-N)^{l} \bullet})$ on the grading pieces $Gr^{W^{St}}_{m+l}H^m_{\textup{lim}}$ of the limiting mixed Hodge structure on the middle cohomology for each $l$. This can be computed explicitly using Proposition \ref{Fu2}. First, suppose that $m$ is odd. Then $$Gr^{W^{St}}_{m}H^m_{\lim}={}_{W^{St}}E^{0,m}_2=\frac{\textup{ker}\left(H^m(\tilde{X}_0)\oplus\bigoplus_{i=1}^l H^m(E_i)\xrightarrow{\theta} \bigoplus_{i=1}^l H^m(Q_i)\right)}{\textup{im}\left(\bigoplus_{i=1}^l H^{m-2}(Q_i)(-1)\xrightarrow{-\gamma} H^m(\tilde{X}_0)\oplus\bigoplus_{i=1}^l H^m(E_i)\right)},$$ which is just $H^m(\Tilde{X}_0)$ since $m$ is odd. Proposition \ref{Fu2} implies that the Hermitian form $S(C\bullet,\overline{\bullet})$ on $Gr^{W^{St}}_{m}H^m(X_{\infty})$ restricts to the Hermitian form $S(C\bullet,\overline{\bullet})$ on $H^m(\tilde{X}_0)$, which has signature $\left(h^{k,m-k}_+(\tilde{X_0}),\ h^{k,m-k}_{-}(\tilde{X_0})\right)$ on $H^{k,m-k}(\tilde{X_0})$ by our assumption. On the other hand, let $\eta =\sum_{i=1}^l \frac{m_i}{2\pi \sqrt{-1}}(A_i-B_i)$ be an element of  ${}_{W^{St}}E^{-1,m+1}_2$, by Proposition \ref{Fu2} again, we have 
\begin{equation*}
\begin{split}
    (\sqrt{-1})^{\frac{m+1}{2}-\frac{m+1}{2}}S(\eta, -N\eta )&=\epsilon(m)Q(\eta,-N\eta) \\
    &=(-1)\epsilon(m)\epsilon(-m)\epsilon(1-m)\sum_{i=1}^{l}m_i^2\langle A_i-B_i,A_i-B_i\rangle_{H^{m-1}(Q_i)}\\
    &=\sum_{i=1}^{l}m_i^2\epsilon(m-1)\langle A_i-B_i,A_i-B_i\rangle_{H^{m-1}(Q_i)}>0.
\end{split}
\end{equation*}
Here, $\langle\bullet,\bullet\rangle_{H^{m-1}(Q_i)}$ denotes the cup product on $H^{m-1}(Q_i)$ and the last inequality follows from the fact that $A_i-B_i$ is a primitive class in $H^{m-1}(Q_i)$. As a result, the signature of $S(C\bullet \overline{(-N)\bullet})$ on $Gr_{F_{\textup{lim}}}^kGr^{W^{St}}_{m+1}H^m_{\lim}$ is $(R,\ 0)$ for $k=\frac{m+1}{2}$ and is $(0,0)$ for the other $k$. Then by Theorem \ref{Corg}, the signature of $S(C\bullet,\overline{\bullet} )$ on $H^m(X_q)$ is $$\left\{\begin{array}{cc}
        \left(h^{k,m-k}_+(\tilde{X_0}),\ h^{k,m-k}_{-}(\tilde{X_0})\right),   & k\neq \frac{m+1}{2},\frac{m-1}{2}, \\
          \left(h^{k,m-k}_+(\tilde{X_0})+R,\ h^{k,m-k}_{-}(\tilde{X_0})\right),   & k= \frac{m+1}{2}\textup{ or }\frac{m-1}{2}
        \end{array}\right.$$ for $|q|$ sufficiently small.

The positivity of $S(\eta,-N\eta)$ above also follows from the Picard Lefschetz formula. In fact, regard $\eta$ as an element of $ H^m(X_{\infty},\Q)$, if we let $\delta_i$ be the vanishing cycle associated to each ODP $x_i$, then we must have $$T(\eta)=\eta+2\sum_{i=1}^l\epsilon(m+2)\langle\eta,\delta_i\rangle_{H^{m}(X_t)}\delta_i,$$ where the multiplicity $2$ reflects the $2$ to $1$ base change in the construction of the semistable family. So combined with the fact that $N\eta\neq 0$, we have $$\epsilon(m)Q(\eta,-N\eta)=-\epsilon(m)\epsilon(m+2)\sum_{i=1}^l 2(\langle\eta,\delta_i\rangle_{H^{m}(X_q)})^2 > 0.$$ 

Next, we discuss the case of $m$ being even. In this case, we still have $$Gr^{W^{St}}_{m}H^m_{\lim}={}_{W^{St}}E^{0,m}_2=\frac{\textup{ker}\left(H^m(\tilde{X}_0)\oplus\bigoplus_{i=1}^l H^m(E_i)\xrightarrow{\theta} \bigoplus_{i=1}^l H^m(Q_i)\right)}{\textup{im}\left(\bigoplus_{i=1}^l H^{m-2}(Q_i)(-1)\xrightarrow{-\gamma} H^m(\tilde{X}_0)\oplus\bigoplus_{i=1}^l H^m(E_i)\right)}.$$ It suffices to show that every class of ${}_{W^{St}}E^{0,m}_2$ has a representative in $V^m\oplus H^m(E_i)_{\textup{prim}}.$ To see this, if we let $A_i$ and $B_i$ be the homology class of the standard $\frac{m}{2}$-dimensional planes on $E_i,$ then the image of the Gysin morphism $H^{m-2}(Q_i)\rightarrow H^m(E_i)$ is generated by $A_i+B_i$, and the primitive part $H^m(E_i)_{\textup{prim}}$ is generated by $A_i-B_i$. So by modulo the image of Gysin morphism, each class in ${}_{W^{St}}E^{0,m}_2$ has a representative in $V^m\oplus H^m(E_i)_{\textup{prim}}.$ This finishes the proof.

\end{proof}

\begin{remark}
 We can see that the argument above doesn't rely on any topological constraints in deformation theory to ensure the existence of smoothing. For Calabi-Yau threefolds with ODPs, by the work of Clemens, Friedman \cite{RF2}, Tian \cite{Tian}, Kawamata \cite{Kawamata}, Ran \cite{Ran}, a necessary and sufficient condition for the existence of smoothing is that there must be some linear relation $\sum_{i=1}^lm_i(A_i-B_i)=0$ in $H^{2}(\Tilde{X}_0)$ with $m_i\neq 0$ for all $i$. In higher dimension, Rollenske-Thomas \cite{RT} shows that a necessary condition for an odd-dimensional Calabi-Yau with ODPs to have a smoothing is that there must be some non-linear relation between $A_i-B_i$ in $H^{m-1}(\Tilde{X}_0)$. It is possible that for more complicated singularities than ODPs, the topological constraints in deformation theory should come in to ensure that the middle limiting cohomology is endowed with a polarized mixed Hodge structure. (See the next example for some elaboration.)
\end{remark}

\subsection{ $O_{16}$ Singularities (Cones over Cubic Surfaces)}

 In this section, we consider Calabi-Yau threefolds with an $O_{16}$ singularity. Before we state our result, we review some general deformation theory of Calabi-Yau threefolds with isolated rational singularities. The general reference is \cite[Section 5]{FrLa2}.

  In the following, let $X$ be a canonical Calabi-Yau threefold in the sense of \cite[Definition 5.1]{FrLa2} such that $H^1(X,\mathcal{O}_X)=0$. Let $Z$ be the singular locus of $X$ and $\pi: \widehat{X}\rightarrow X$ a good equivariant resolution of $X$ so that $E=\pi^{-1}(Z)$ is a divisor with simple normal crossing. For each $x\in Z$, let $E_x=\pi^{-1}(x)$ and $\widehat{X}_x$ be the germ of $\widehat{X}$ around $E_x$. Consider the following spaces: $$S:=\bigoplus_{x\in Z}\textup{ker}(H^4_{E_x}(\widehat{X}_x)\rightarrow H^4(E_x)), $$ $$T:=\textup{ker}(S\rightarrow H^4(\widehat{X})),$$ $$K':=\bigoplus_{x\in Z}\textup{ker}(H^{2}_{E_x}(\Omega^2_{\widehat{X}_x})\rightarrow H^4_{E_x}(\widehat{X}_x)),$$ $$K:=\textup{ker}(H^2_E(\widehat{X},\Omega^2_{\widehat{X}})\rightarrow H^0(R^2\pi_{*}\Omega^2_{\widehat{X}}))\cong K'\oplus S.$$ Then by \cite[(5.1) and Lemma 5.6]{FrLa2}, there exists a commutative diagram of exact sequences 
 \begin{center}
     \begin{tikzcd}
\mathbb{T}^1_{X} \arrow[d] \arrow[r] & K'\oplus T \arrow[r] \arrow[d]                      & 0 \arrow[d]                           \\
{H^0(X,T^1_X)} \arrow[d] \arrow[r]   & K \arrow[r] \arrow[d]                               & 0 \arrow[d]                           \\
{H^2(X,T_X^0)} \arrow[r]             & {H^2(\widehat{X},\Omega^2_{\widehat{X}})} \arrow[r] & H^0(R^2\pi_{*}\Omega^2_{\widehat{X}})
\end{tikzcd}
 \end{center}

We define the \textit{defect} $\sigma(X)$ of $X$ to be $\textup{dim}_{\C}\textup{im}(S\rightarrow H^4(\widehat{X}))=\textup{dim}_{\C}S -\textup{dim}_{\C}T.$ By the above diagram, we can see that if the restriction of global deformations $\mathbb{T}_X^1$ to local deformations $H^0(X, T^1_X)$ is surjective, then $T\cong S$, or equivalently $\sigma(X)=0$.

\begin{remark}
The invariant $\sigma(X)$ was introduced by Kawamata \cite[p.27]{Kawamata} for a normal projective variety $X$ to measure the failure of $\Q$-factoriality on $X$. If $X$ is a compact complex threefold with only isolated rational singularities, then the defect $\sigma(X)$ also measures the failure of Poincar\'e duality on $X$ (cf. \cite[Section 3]{Nast} and \cite[Remark 2.2]{FrLa}). 
\end{remark}

Now, if $X$ is a quintic threefold with an $O_{16}$ singularity, it is easy to see that any deformation of the $O_{16}$ singularity can be extended to a global embedded deformation of $X$, and hence $\sigma(X)=0$. As a result, it is natural to expect that a Calabi-Yau threefold with an $O_{16}$ singularity and with "good deformation theory" has no defect. Then we have the following theorem:

\begin{theorem}
Let $f: X \rightarrow \Delta$ be a one-parameter degeneration of $3$-dimensional compact complex manifolds such that the central fiber $X_0$ is a generalized Calabi-Yau threefold with an $O_{16}$ singularity. Suppose, moreover, that $X_0$ admits a K\"ahler resolution of singularities and the defect $\sigma(X_0)$ is zero. Then the nearby fiber $X_q$ satisfies the $\partial \overline{\partial}$-lemma for $|q|$ small. 

 Moreover, let $\tilde{X_0}$ be the blowup of $X_0$ at the $O_{16}$ singularity. If $H^3(\tilde{X_0},\C)$ together with the cup product $S(\bullet,\bullet)$ is a polarized Hodge structure, then $H^3(X_q,\C)$ together with the cup product $S(\bullet,\bullet)$ is a polarized Hodge structure for $|q|$ sufficiently small.
\end{theorem}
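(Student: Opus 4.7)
The strategy is to apply Theorem \ref{Corg}, following the blueprint of the proof of Theorem \ref{thmodp} for ordinary double points. First, I would construct an explicit semistable model. Locally analytically, the $O_{16}$ singularity is the hypersurface $\{f(x_1,\ldots,x_4) = t\}$ with $f$ a smooth cubic defining a cubic surface $S \subset \mathbb{P}^3$. The base change $t = \tau^3$ followed by blowup of the singular point yields a semistable model whose central fiber is $\tilde{Y}_0 = \tilde{X}_0 \cup V$: here $V \subset \mathbb{P}^4$ is the smooth cubic $3$-fold $\{f(x_1,\ldots,x_4) - x_5^3 = 0\}$, glued to the K\"ahler resolution $\tilde{X}_0$ along $S = V \cap \{x_5 = 0\}$, which is simultaneously the exceptional divisor of $\tilde{X}_0 \to X_0$ and a hyperplane section of $V$. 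All strata are K\"ahler, so Steenbrink's machinery applies.

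Next I would verify $W^{St} = W(N,d)$. Since there are no triple intersections and $H^{\textup{odd}}(S) = 0$, the only nontrivial check is for $d = 3$; the relevant pieces are $E_1^{-1,4} = H^2(S)(-1)$, $E_1^{0,3} = H^3(\tilde{X}_0) \oplus H^3(V)$, and $E_1^{1,2} = H^2(S)$. Lefschetz for $S \subset V$ gives $\ker(\gamma_V \colon H^2(S) \to H^4(V)) = H^2_{\textup{prim}}(S)$ and $\textup{im}(H^2(V) \to H^2(S)) = \langle h_S \rangle$. For $\tilde{X}_0$, pullbacks from $X_0$ restrict trivially to $S$ (since $S$ contracts to a point), while $[S]|_S = -h_S$, so $\textup{im}(H^2(\tilde{X}_0) \to H^2(S)) = \langle h_S \rangle$ as well. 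The key input from $\sigma(X_0) = 0$: via the Thom identification $H^4_{E_x}(\widehat{X}_x) \cong H^2(S)(-1)$, the edge map to $H^4(E_x) = H^4(S)$ is cup with $-h_S$, whose kernel is precisely $H^2_{\textup{prim}}(S)$; the defect condition then reads as the vanishing of the Gysin map $H^2_{\textup{prim}}(S) \to H^4(\tilde{X}_0)$. Combining, $E_2^{-1,4} = H^2_{\textup{prim}}(S)(-1)$ and $E_2^{1,2} = H^2(S)/\langle h_S \rangle$, and the identity map on $H^2(S)$ descends, via the orthogonal decomposition $H^2(S) = \langle h_S \rangle \oplus H^2_{\textup{prim}}(S)$, to the isomorphism realizing $N$. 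For $d \neq 3$ the potentially nontrivial $E_1$ terms vanish by dimension or $H^{\textup{odd}}(S) = 0$, so $W^{St} = W(N,d)$ throughout. Theorem \ref{Corg}(1) then delivers the $\partial\overline{\partial}$-lemma on $X_q$.

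For the polarization conclusion, I would invoke Theorem \ref{Corg}(2) and verify that $(H^3_{\textup{lim}}, W^{St}, F_{\textup{lim}})$ is a polarized mixed Hodge structure. By Proposition \ref{Fu2}, the limiting cup product on $\textup{Gr}^{W^{St}}_3 H^3_{\textup{lim}} = H^3(\tilde{X}_0) \oplus H^3(V)$ decomposes, up to a nonzero scalar, as the sum $\int_{\tilde{X}_0} + \int_V$, so it is polarized by the hypothesis on $\tilde{X}_0$ together with the K\"ahler polarization on the projective cubic $3$-fold $V$. The graded pieces $\textup{Gr}^{W^{St}}_2$ and $\textup{Gr}^{W^{St}}_4$ both coincide with $H^2_{\textup{prim}}(S)$, polarized by Hodge-Riemann for the cubic surface. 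The $-N$-twisted sign computation on $\textup{Gr}^{W^{St}}_4 H^3_{\textup{lim}}$ is essentially verbatim the one in the proof of Theorem \ref{thmodp}: for $\eta \in H^2_{\textup{prim}}(S)$, Proposition \ref{Fu2} reduces $S(C\eta, -N\overline{\eta})$ to a positive multiple of the primitive intersection form on $S$, with the correct $\epsilon$-signs. Theorem \ref{Corg}(2) then finishes the proof.

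The step I expect to be the main obstacle is the rigorous translation of the abstract defect condition $\sigma(X_0) = 0$ into the concrete Gysin vanishing used above. This requires the local cohomology computation around the singular point, either directly via the Thom isomorphism for the smooth divisor $S \subset \widehat{X}_x$, or equivalently via the Gysin sequence for the $\mathbb{C}^*$-bundle $\widehat{X}_x \setminus S \to S$ with Euler class $-h_S$, yielding $H^3(\widehat{X}_x \setminus S) = H^2_{\textup{prim}}(S)$; a diagram chase then identifies the defect-relevant subspace of $H^4_{E_x}(\widehat{X}_x)$ with $H^2_{\textup{prim}}(S)$, whose composite into $H^4(\tilde{X}_0)$ realizes the Gysin map in question.
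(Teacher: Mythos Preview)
Your approach matches the paper's: construct the semistable model via a cubic base change and blowup (central fiber $\tilde{X}_0 \cup V$ glued along the cubic surface $S$), verify $W^{St}=W(N,d)$ using the defect hypothesis for $d=3$, and invoke Theorem~\ref{Corg}. One step needs tightening. Your claim that $\textup{im}\bigl(H^2(\tilde{X}_0)\to H^2(S)\bigr)=\langle h_S\rangle$ is argued by asserting that $H^2(\tilde{X}_0)$ is spanned by pullbacks from $X_0$ together with $[S]$, but this decomposition is not automatic when $X_0$ is singular at the blown-up point. The paper closes this by duality instead: once $\sigma(X_0)=0$ gives $H^2_{\textup{prim}}(S)\subseteq\ker\bigl(\iota_!\colon H^2(S)\to H^4(\tilde{X}_0)\bigr)$, adjointness of $\iota^*$ and $\iota_!$ under the cup pairing forces $\textup{im}(\iota^*)\subseteq\bigl(H^2_{\textup{prim}}(S)\bigr)^\perp=\langle h_S\rangle$, and you already have all the ingredients for this fix. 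Conversely, your identification $\textup{Gr}_3^{W^{St}}H^3_{\textup{lim}}=H^3(\tilde{X}_0)\oplus H^3(V)$ is more careful than the paper, which writes only $H^3(\tilde{X}_0)$; your remark that the projective cubic $3$-fold contributes an automatically polarized summand is genuinely needed to complete the polarization argument.
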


\begin{proof}

 We describe a semistable model of the family first. Let $Y$ be the blowup of $X$ along the $O_{16}$ singularity. Consider the $3$ to $1$ covering of $\Delta$ defined by $q\mapsto q^3$, and let $\tilde{Y}\rightarrow \Delta $ be the pullback family. Then $\Tilde{Y}_0$ is normal crossings, all of whose components are reduced. We can also describe $\Tilde{Y}_0$ as following: let $\Tilde{X}_{0}$ be the blowup of $X_0$ along the $O_{16}$ singularity. Then $\Tilde{X}_{0}$ is a compact complex manifold of Fujiki class $\mathcal{C}$, and hence satisfies the $\partial\overline{\partial}$-lemma. The preimage of the $O_{16}$ singularity in $\Tilde{X}_{0}$ is a cubic surface $Q$ in $\mathbb{P}^3$. Let $E$ be a cubic $3$-fold in $\mathbb{P}^{4}$. Then $$\Tilde{Y}_0\cong\Tilde{X}_0 \amalg  E/ \sim,$$ where the equivalence is by identifying  $Q$ with some hyperplane section in $E$. 

 Consider Steenbrink's limiting Hodge structure associated to the family $\tilde{Y}\rightarrow \Delta $. As in the previous example, the mixed Hodge structure on $H^d(X_{\infty})$ is pure unless $d=3$ by the Lefschetz hyperplane theorem. Thus it suffice to show that $W^{St}=W(N,3)$ on middle cohomology $H^3(X_{\infty})$, or equivalently $${}_{W^{St}}E_2^{-1,4}=\textup{ker}\left(H^{2}(Q)(-1)\xrightarrow{-\gamma} H^{4}(\Tilde{X}_{0})\oplus H^{4}(E) \right)$$ is isomorphic via the map induced by $(2\pi \sqrt{-1})I$ to $${}_{W^{St}}E_2^{1,2}=\textup{coker}\left( H^{2}(\Tilde{X}_{0})\oplus H^{2}(E)\xrightarrow{\theta}H^{2}(Q)\right).$$ 

We can see that ${}_{W^{St}}E_2^{-1,4}=\text{ker}\left(H^{2}(Q)_{\text{prim}}(-1)\xrightarrow{-\gamma} H^{4}(\Tilde{X}_{0})\right)$ consists of the linear relations in $X$ between the classes of difference of lines in $Q$. Thus, unlike the previous example, since $H^{2}(Q)_{\text{prim}}$ is of $6$ dimension, it is not sufficient for us to get the isomorphism of ${}_{W^{St}}E_2^{-1,4}$ and ${}_{W^{St}}E_2^{1,2}$ without further assumption on $X_0$. Note that by the Thom isomorphism theorem, we have $$S:=\textup{ker}(H^4_{Q}(\tilde{X}_0)\rightarrow H^4(Q))\cong\textup{ker}(H^2(Q)\rightarrow H^4(Q))=H^2(Q)_{\textup{prim}}.$$ By our assumption, the defect $\sigma(X_0)$ is zero, and hence ${}_{W^{St}}E_2^{-1,4}=H^{2}(Q)_{\text{prim}}(-1)$. Also, by duality, $${}_{W^{St}}E_2^{1,2}=\textup{coker}\left( H^{2}(E)\xrightarrow{\theta}H^{2}(Q)\right),$$ which is isomorphic to $H^{2}(Q)_{\text{prim}}$ via the identity map. Then by Theorem \ref{Corg}, $X_q$ satisfies the $\partial \overline{\partial}$-lemma for $|q|$ small.

The computation about polarization is also similar to the previous example. First, since ${}_{W^{St}}E_2^{-1,4}=H^{2}(Q)_{\text{prim}}(-1)$, $Gr_4^{W^{St}}H^3_{\lim}$ is polarized by $S(C\bullet,\overline{(-N)\bullet})$. Also, $$Gr_3^{W^{St}}H^3_{\lim}={}_{W^{St}}E_2^{0,3}=H^3(\tilde{X}_0)$$ is polarized by $S(C\bullet, \overline{\bullet})$ by assumption. Hence, Theorem \ref{Corg} implies $H^3(X_q,\C)$ together with the cup product $S$ is a polarized Hodge structure for $|q|$ small.

\end{proof}

\subsection{Non-K\"ahler Calabi-Yau Threefolds with Arbitrarily Large $b_2$}

As a consequence of Theorem \ref{Corg}, we have the following theorem:

\begin{theorem}\label{HaSa}
The non-K\"ahler Calabi-Yau threefolds $X_3(a)$ constructed by Hashimoto-Sano \cite[Theorem 1.1]{HaSa} satisfy the $\partial\overline{\partial}$-lemma. Moreover, $H^3(X_3(a),\C)$ together with the cup product $S(\bullet,\bullet)$ is a polarized Hodge structure.
\end{theorem}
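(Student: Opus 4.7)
The plan is to apply Theorem \ref{Corg} to the one-parameter semistable degeneration $\mathcal{X}\to\Delta$ coming from Namikawa--Kawamata log smoothing applied to Hashimoto--Sano's $d$-semistable SNC Calabi--Yau threefold $\mathcal{X}_0$. First, I would recall the explicit combinatorial description of $\mathcal{X}_0$ from \cite{HaSa}: its irreducible components $E_i$ and all multiple intersections $E_I = E_{i_1}\cap\cdots\cap E_{i_k}$ are smooth projective (in particular K\"ahler), so the hypothesis of Theorem \ref{Corg} is satisfied and Steenbrink's limiting mixed Hodge structure $(H^d_{\lim},W^{St},F_{\lim})$ is well-defined on $H^d(\mathcal{X}_\infty,\C)$.

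To obtain the $\partial\overline{\partial}$-lemma on $X_3(a)$ via Theorem \ref{Corg}(1), it suffices to verify situation (\textbf{A}), namely $W^{St}=W(N,d)$ on $H^d(\mathcal{X}_\infty,\C)$ for every $d$. By the discussion in Section 3, this amounts to checking that the identity map $(2\pi\sqrt{-1})^r I:{}_{W^{St}}E_1^{-r,d+r}\to{}_{W^{St}}E_1^{r,d-r}$ descends to an isomorphism at the $E_2$-page for each $r\geq 0$. For $d\neq 3$ the $E_1$-terms with $r\neq 0$ vanish (Lefschetz hyperplane theorem on the strata combined with the Calabi--Yau condition), so there is nothing to check. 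For $d=3$ I would explicitly write down the three-term complex ${}_{W^{St}}E_1^{-1,4}\to{}_{W^{St}}E_1^{0,3}\to{}_{W^{St}}E_1^{1,2}$ in terms of Gysin and restriction maps between $H^\bullet$ of the strata of $\mathcal{X}_0$, then use the explicit incidence combinatorics of Hashimoto--Sano's construction to verify that the induced map between ${}_{W^{St}}E_2^{-1,4}$ and ${}_{W^{St}}E_2^{1,2}$ is an isomorphism. Poincar\'e duality between these two graded pieces (via the identification of Gysin with the dual of pullback, as in the ODP case) should reduce this to checking injectivity.

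For the polarization statement (Theorem \ref{Corg}(2) with $s^{p,r}_-=0$), I would compute the Hermitian form $S(C\bullet,(-N)^l\overline{\bullet})$ on each $\textup{Gr}^{W^{St}}_{3+l}H^3_{\lim}$ using Proposition \ref{Fu2}. The weight-$3$ piece $\textup{Gr}^{W^{St}}_{3}H^3_{\lim}={}_{W^{St}}E_2^{0,3}$ inherits a polarization from the Hodge--Riemann bilinear relations on the K\"ahler components (using Lemma \ref{rmkg} and the $\partial\overline{\partial}$-lemma on each component); the weight-$4$ piece ${}_{W^{St}}E_2^{-1,4}$ is Tate of type $(2,2)$, so polarization reduces to showing that the induced pairing $\psi(\bullet,\bullet)$ on the kernel of the Gysin map is definite with the correct sign. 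As in the ODP calculation in the proof of Theorem \ref{thmodp}, this should follow from expressing the elements as linear combinations of primitive cohomology classes on the double-intersection surfaces and invoking the Hodge--Riemann relations there, or equivalently via a Picard--Lefschetz vanishing-cycle computation.

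The main obstacle will be the explicit bookkeeping for Hashimoto--Sano's specific central fiber: because $X_3(a)$ is genuinely non-K\"ahler (and in particular not of Fujiki class $\mathcal{C}$), there is no global $L\in H^2(\mathcal{X}_0,\R)$ restricting to a K\"ahler class on every component, so Corollary \ref{kah} and the Guill\'en--Navarro Aznar polarized Hodge--Lefschetz module machinery are unavailable, and the equality $W^{St}=W(N,d)$ must be verified by direct analysis of the strata maps. Identifying precisely which linear combinations of $(1,1)$-classes on the double curves lie in $\ker(\gamma)\cap\ker(\theta^\vee)$ for the Hashimoto--Sano model, and confirming the sign of $\psi$ there, is where the concrete geometric input from \cite{HaSa} enters and where the argument is technically most delicate.
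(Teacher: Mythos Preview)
Your approach is the same as the paper's, but two points need correction.

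First, the central fiber in Hashimoto--Sano's construction has exactly \emph{two} irreducible components $X_1$, $X_2$ (projective threefolds) glued along a single smooth K3 surface $S$; there are no triple intersections. So ${}_{W^{St}}E_1^{-r,d+r}=0$ for $|r|\geq 2$, and the only nontrivial check is that
\[
{}_{W^{St}}E_2^{-1,4}=\ker\bigl(H^2(S)(-1)\xrightarrow{-\gamma}H^4(X_1)\oplus H^4(X_2)\bigr)
\]
and ${}_{W^{St}}E_2^{1,2}=\mathrm{coker}\bigl(H^2(X_1)\oplus H^2(X_2)\xrightarrow{\theta}H^2(S)\bigr)$ have the same dimension. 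The paper does this by observing that $\mathrm{im}(\theta)=\mathrm{Pic}(S)\otimes\Q$ is three-dimensional (Noether--Lefschetz) and that the image of $\mathrm{Pic}(S)$ under $-\gamma$ is also three-dimensional (cup product with $c_1(N_{S_2/X_2})$). Your phrase ``double curves'' should read ``double-locus K3 surface''.

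Second, your assertion that the weight-$4$ piece is ``Tate of type $(2,2)$'' is wrong: since $S$ is a K3 surface, $H^2(S)(-1)$ has Hodge types $(3,1)$, $(2,2)$, $(1,3)$, and the transcendental part $H^{2,0}(S)\oplus H^{0,2}(S)$ lies automatically in $\ker(-\gamma)$ (as $H^4(X_i)$ is purely $(2,2)$). The polarization argument in the paper is that ${}_{W^{St}}E_2^{-1,4}\subseteq H^2(S)_{\mathrm{prim}}(-1)$ for the ample class $(\iota^a)^*c_1(N_{S_2/X_2})$, whence the Hodge--Riemann relations on the K3 (for \emph{all} Hodge types) combined with Proposition~\ref{Fu2} yield positivity of $S(C\bullet,(-N)\overline{\bullet})$. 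Your proposed reduction to a sign check on a definite form would miss the $(3,1)$ and $(1,3)$ contributions.
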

\begin{proof}
We sketch the construction of Hashimoto-Sano first. Let $$\mathbb{P}(3):=\mathbb{P}^1\times \mathbb{P}^1\times\mathbb{P}^1$$ and $S$ be a very general $(2,2,2)$ hypersurface of $\mathbb{P}(3)$. Then $S$ is a $K_3$ surface, and by the Noether-Lefschetz theorem, we have $\textup{Pic}(S)=\bigoplus_{i=1}^3\Z e_i$, which is generated by the three fiber class of the elliptic fibrations $S\rightarrow\mathbb{P}^1$ given by projection to each $\mathbb{P}^1$. By \cite[\S 3.1]{HaSa}, for all positive integer $a$, there is an automorphism $\iota^a\in\textup{Aut}(S)$ such that $$(\iota^a)^{*}=\left(\begin{array}{ccc}
  1   & 4a^2-2a & 4a^2+2a \\
   0  & 1-2a & -2a  \\
   0 & 2a & 1+2a
\end{array}\right) \in \textup{Aut}(\Z^3)\cong \textup{Aut}(\textup{Pic}(S)).$$

The $d$-semistable SNC Calabi-Yau variety $X_0$ is constructed as follows. Let $X_1'$ be the blowup of $\mathbb{P}(3)$ along $f_1$,..., $f_a\in |\mathcal{O}_S(1,0,0)|$ and $X_1$ be the blowup of $X_1'$ along the strict transform of the general member $C_a\in |\mathcal{O}_S(16a^2-a+4,4-8a,4+8a)|$. Let $X_2:=\mathbb{P}(3)$, $S_1:=S\subseteq X_1$ and $S_2:=S\subseteq X_1$. Then $X_0$ is constructed by gluing $X_1$ and $X_2$ along $S_1$ and $S_2$ via the automorphism $\iota^a:S_1\rightarrow S_2$. By the theorem of Namikawa-Kawamata \cite[Theorem 4.2]{NaKa}, $X_0$ admits a global smoothing $f: X\rightarrow \Delta$ where the central fiber is isomorphic to $X_0$. Then $X_3(a)$ is defined as the general fiber $X_q$ for $|q|$ small. 

We need to calculate the limiting mixed Hodge structure associated to  $f: X\rightarrow \Delta$. As in the previous examples, the mixed Hodge structure on $H^d(X_{\infty},\C)$ is pure unless $d= 3$ by the Lefschetz hyperplane theorem. Also, we have $${}_{W^{St}}E_2^{-1,4}=\textup{ker}\left(H^{2}(S_1)(-1)\xrightarrow{-\gamma} H^{4}(X_1)\oplus H^{4}(X_2) \right)$$ and $${}_{W^{St}}E_2^{1,2}=\textup{coker}\left( H^{2}(X_1)\oplus H^{2}(X_2)\xrightarrow{\theta}H^{2}(S_1)\right).$$ 
We need to show that the two spaces above are isomorphic via the map induced by $(2\pi \sqrt{-1})I$. Note that the image of the restriction map $ H^{2}(X_1)\oplus H^{2}(X_2)\xrightarrow{\theta}H^{2}(S_1)$ is simply $\textup{Pic}(S)\subseteq H^2(S)$. Since the composition map $H^2(X_2)\rightarrow H^2(S_2)\rightarrow H^4(X_2)$ of the restriction morphism and the Gysin morphism coincides with the cup product with $c_1(N_{S_2/X_2})$, the image of $\text{Pic}(S_1)$ under $-\gamma$ is three-dimensional. By this and the fact that ${}_{W^{St}}E_2^{-1,4}$ and ${}_{W^{St}}E_2^{1,2}$ have the same dimension, we see that ${}_{W^{St}}E_2^{-1,4}$ and ${}_{W^{St}}E_2^{1,2}$ are isomorphic via the map induced by $(2\pi \sqrt{-1})I$. Hence, $W^{St}=W(N,3)$ on $H^3(X_{\infty},\C)$, and then by Theorem \ref{Corg}, $X(a)$ satisfies the $\partial\overline{\partial}$-lemma.

Next, we show that the mixed Hodge structure on $H^3(X_{\infty},\C)$ together with the cup product $S$ is a polarized Hodge structure. We have $$Gr^{W^{St}}_3H^3(X_{\infty})={}_{W^{St}}E^{0,3}\cong H^1(C_a,\Q)(-1)\oplus \bigoplus_{i=1}^a H^1(f_i,\Q)(-1),$$ which, together with the cup product $S$, is a polarized Hodge structure. (Note that the cup product $\epsilon(3)\langle\bullet,\bullet \rangle_{X_1}$ restricts to $\epsilon(1)\langle\bullet,\bullet \rangle_{C_a}$ on $H^1(C_a)$ by the projection formula and it holds for $H^1(f_i)$ similarly.) On the other hand, note that $$\textup{ker}\left(H^{2}(S_1)(-1)\xrightarrow{-\gamma} H^{4}(X_2) \right)$$ consists of the primitive class with respect to the ample line bundle $(\iota^a)^{*}N_{S_2/X_2}$, i.e. it consists of those $\eta\in H^{2}(S_1)(-1)$ such that $\eta\wedge c_1((\iota^a)^{*}N_{S_2/X_2})=0$, where $N_{S_2/X_2}$ is the normal bundle of $S_2$ in $X_2$. Therefore, we see that ${}_{W^{St}}E_2^{-1,4}\subseteq H^2(S_1)_{\textup{prim}}(-1)$ and hence ${}_{W^{St}}E_2^{-1,4}$ is polarized by $S(C\bullet,(-N)\bullet)$ by Proposition \ref{Fu2}. By Theorem \ref{Corg}, we can conclude that $H^3(X_3(a))$ together with the cup product $S(\bullet,\bullet)$ is a polarized Hodge structure.
 
\end{proof}

\begin{remark}
    By a similar computation, one can show that another class of the non-K\"ahler Calabi-Yau threefolds with arbitrarily large $b_2$ constructed by Sano \cite{Sa2} satisfies the $\partial\overline{\partial}$-lemma as well, and the middle cohomology together with the cup product is a polarized Hodge structure. Thus, it is interesting to ask if there exist non-K\"ahler Calabi-Yau threefolds that don't satisfy the $\partial\overline{\partial}$-lemma or satisfy the $\partial\overline{\partial}$-lemma but whose middle cohomology, together with the cup product $S$, isn't a polarized Hodge structure.
\end{remark}

\subsection{Non K\"ahler Calabi-Yau $m$-folds with Arbitrarily Large $b_2$}

We turn to the non-K\"ahler Calabi-Yau $m$-folds with arbitrarily large $b_2$ for $m\geq 4$ constructed by Sano \cite[Theorem 1.1]{Sa}. Our main theorem is:

\begin{theorem}\label{Sa}
The non-K\"ahler Calabi-Yau $m$-folds $X_m(a)$ with arbitrarily large $b_2$ for $m\geq 4$ constructed by Sano \cite[Theorem 1.1]{Sa} satisfy the $\partial\overline{\partial}$-lemma. Moreover, the Hodge index on the middle cohomology $H^m(X_m(a))$ is as follows:
\begin{enumerate}
    \item If $m\equiv 3 \ (\textup{mod}\  4)$, the Hodge structure on $H^m(X_m(a))$  together with the cup product $S(\bullet,\bullet)$ is a polarized Hodge structure.
    \item If $m\equiv 1 \ (\textup{mod} \ 4)$, the signature of $S(C\bullet,\overline{\bullet})$ on $H^{k,m-k}(X_m(a))$ is $$\left\{\begin{array}{cc}
        \left(h^{k,m-k}(X_m(a)),\ 0\right),   & k\neq \frac{m+1}{2},\frac{m-1}{2}\\
          \left(h^{k,m-k}(X_m(a))-9(27a^2-2a+5)-a-6,\ 9(27a^2-2a+5)+a+6\right),   & k= \frac{m+1}{2}\textup{ or }\frac{m-1}{2}
        \end{array}\right. .$$
    \item If $m=4$, the signature of $S(C\bullet,\overline{\bullet})$ on $H^{k,m-k}(X_m(a))$ is $$\left\{\begin{array}{cc}
        \left(h^{k,m-k}(X_m(a)),\ 0\right),   & k\neq \frac{m}{2} \\
          \left(h^{k,m-k}(X_m(a))-a-1,\ a+1\right),   & k= \frac{m}{2}
        \end{array}\right. .$$
        \item If $m$ is even and $m\geq 6$, the signature of $S(C\bullet,\overline{\bullet})$ on $H^{k,m-k}(X_m(a))$ is $$\left\{\begin{array}{cc}
        \left(h^{k,m-k}(X_m(a)),\ 0\right),   & k\neq \frac{m}{2} \\
          \left(h^{k,m-k}(X_m(a))-a-2,\ a+2\right),   & k= \frac{m}{2}
        \end{array}\right. .$$ 
\end{enumerate}
\end{theorem}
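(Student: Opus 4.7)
The plan is to mirror the strategy of Theorem \ref{HaSa}. We take a one-parameter semistable smoothing $f : X \to \Delta$ of Sano's $d$-semistable SNC Calabi-Yau $m$-fold $X_0 = X_1 \cup_S X_2$ from \cite{Sa} (whose existence is supplied by the Namikawa-Kawamata theorem \cite[Theorem 4.2]{NaKa}), chosen so that the general fiber $X_q$ is $X_m(a)$, and then apply Theorem \ref{Corg}. Sano's central fiber glues two smooth K\"ahler components $X_1, X_2$ along a smooth common divisor $S$ via a suitable automorphism; one of the components is constructed as the smooth total space of a fibered product of two anticanonical Lefschetz fibrations over $\mb{P}^1$ of Schoen's type, while the other involves blowups whose combinatorics carry the parameter $a$. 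The Lefschetz-type theorem (Theorem \ref{fib}) is the key input for computing the cohomology of the Schoen-type component and of its intersection with $S$, and it is precisely the reason we needed it.

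To obtain the $\partial\overline{\partial}$-lemma on $X_m(a)$ via Theorem \ref{Corg}(1), the remaining task is to check that $W^{St} = W(N, m)$ on $H^m(X_\infty, \C)$. As in the threefold case of Theorem \ref{HaSa}, the Lefschetz hyperplane theorem applied to $S \hookrightarrow X_i$ makes $H^d(X_\infty, \C)$ pure for $d \neq m$, so only the $d = m$ case is nontrivial, and there one needs to show that
\[
{}_{W^{St}} E_2^{-1, m+1} \xrightarrow{(2\pi\sqrt{-1}) I} {}_{W^{St}} E_2^{1, m-1}
\]
is an isomorphism. The two sides are Poincar\'e dual subquotients of $H^{m-1}(S)$, namely the kernel of the Gysin map $H^{m-1}(S)(-1) \to \bigoplus_i H^{m+1}(X_i)$ and the cokernel of the restriction map $\bigoplus_i H^{m-1}(X_i) \to H^{m-1}(S)$; Theorem \ref{fib} makes both maps explicit, and the orthogonality argument used in Theorem \ref{HaSa} then yields the isomorphism.

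For the signature, the plan is to compute the Hermitian form $S(C\bullet, (-N)^l \overline{\bullet})$ on each $\textup{Gr}^{W^{St}}_{m+l} H^m_{\textup{lim}}$ using Fujisawa's formula (Proposition \ref{Fu2}) and to sum via Theorem \ref{Corg}(2). The $l = 0$ piece contributes only positive definite terms, since it is assembled from polarized Hodge structures on the K\"ahler strata. The $l = 1$ piece sits inside $H^{m-1}(S)_{\textup{prim}}(-1)$, and Fujisawa's formula realizes $S(C\bullet, -N\overline{\bullet})$ there as a positive multiple of the global sign $\epsilon(m)\epsilon(m-1)$ times the standard Hodge-Riemann form on $H^{m-1}(S)_{\textup{prim}}$. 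This global sign is positive when $m \equiv 3 \pmod 4$, giving the polarized conclusion (1), and is negative in the remaining cases, producing the negative definite contribution in the central Hodge types. The main obstacle will be extracting the explicit integers $9(27a^2 - 2a + 5) + a + 6$, $a + 1$, and $a + 2$ from the dimension of this kernel in Sano's concrete construction: this requires a K\"unneth-type calculation via Theorem \ref{fib} for the Schoen component together with a blowup-formula analysis for the other component, where the $a$-dependence enters through the number of blowups and the Picard-Lefschetz-type action that Sano uses; the small discrepancy between $m = 4$ and $m \geq 6$ should reflect the special behavior of the middle-dimensional Schoen component in low dimension.
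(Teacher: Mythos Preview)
Your overall framework is right---apply Theorem \ref{Corg} to the Namikawa--Kawamata smoothing and use Theorem \ref{fib} to control the cohomology of the Schoen-type piece---but two of your key simplifying claims are false and would make the argument collapse.

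First, the purity claim. You assert that the Lefschetz hyperplane theorem forces $H^d(X_\infty)$ to be pure for $d\neq m$, so only $d=m$ needs work. This is what happens in the Hashimoto--Sano threefold case, but it fails here. The gluing divisor is $D=S\times_{\mb{P}^1}T$ sitting in $X_2=\mb{P}^2\times T$ and in the blowup $X_1$, and there is no applicable Lefschetz theorem making the Gysin map $H^{k-1}(D)(-1)\to H^{k+1}(X_1)\oplus H^{k+1}(X_2)$ injective for $k<m$. The paper computes this kernel explicitly for every $k$: for odd $k<m$ it is the $7$-dimensional space $\langle h,\phi_a^*h,f\rangle^{\perp}\cup\eta^{(k-3)/2}$, not zero. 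So one must check $W^{St}=W(N,d)$ degree by degree, and this requires Theorem \ref{fib} together with Lemma \ref{gysin} to identify the Gysin and restriction maps through the blowups, plus a separate argument (via the theorem of the fixed part) to pin down the image of the Gysin map from the fibers $F_i$. Your ``orthogonality argument'' sketch is along the right lines for the final step but skips all of this.

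Second, the signature bookkeeping. You claim the $l=0$ graded piece is positive definite and all negativity comes from ${}_{W^{St}}E_2^{-1,m+1}$ via a global sign $\epsilon(m)\epsilon(m-1)$. Both halves are wrong. For $m\equiv 1\pmod 4$, the weight-$m$ piece ${}_{W^{St}}E_2^{0,m}$ contains the non-primitive summand $L^{(m-3)/2}H^1(\Gamma_a)\oplus\bigoplus_i H^{m-2}(F_i)$, which contributes $g(C_a)+a$ negative classes; only $7$ of the $9(27a^2-2a+5)+a+6$ negatives come from the weight-$(m+1)$ piece. For $m$ even, ${}_{W^{St}}E_2^{-1,m+1}=H^{m-1}_{\textup{prim}}(D)$ is entirely positive, and \emph{all} the negatives ($a+1$ or $a+2$) come from non-primitive pieces of ${}_{W^{St}}E_2^{0,m}$, which the paper isolates by an explicit orthogonal decomposition into five summands $U_1,\dots,U_5$. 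Your sign heuristic via $\epsilon(m)\epsilon(m-1)$ does not see any of this: the actual mechanism is a Lefschetz decomposition on each graded piece, where powers of $L$ alternate the sign.
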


Before we prove the theorem, we first sketch the construction of non-K\"ahler Calabi-Yau $m$-folds (not even of Fujiki class $\mathcal{C}$) constructed by Sano \cite{Sa}. The construction relies on the projective Calabi-Yau manifolds of Schoen-type. Let $S\subset \mb{P}^2\times \mb{P}^1$ be a general hypersurface of bidegree $(3,1)$ (i.e. a general rational elliptic surface) and $T\subset \mb{P}^1\times \mb{P}^{m-2}$ be a general hypersurface of bidegree $(1,m-1)$. Then the fiber product $S\times_{\mb{P}^1}T$ is a projective Calabi-Yau manifold. $S$ can be regarded as the blowup of $\mb{P}^2$ at $9$ points. Let $H$ be the pullback of $\mathcal{O}_{\mb{P}^2}(1)$ on $S$ and $h$ the cohomology class of $H$. Also let $E_i,$ $i=1,...,9$, be the exceptional divisors and $e_i$, $i=1,...,9$, the cohomology class of $E_i$. Then $H^2(S)$ is generated by $h,e_1,...,e_9$. By using the quadratic transformation on $\mb{P}^2$, \cite[Proposition 2.6]{Sa} shows that for all positive integer $a$, there exists an automorphism $\phi_a: S\rightarrow S$ over $\mb{P}^1$ such that $$\phi^*_aH=(27a^2+1)H-(9a^2-3a)(E_1+E_2+E_3)-9a^2(E_4+E_5+E_6)-(9a^2+3a)(E_7+E_8+E_9).$$ 

Let $Y_i=\mb{P}^2\times T$ for $i=1,2$ and $D_i:=(S\times \mb{P}^{m-2})\cap Y_i =S\times_{\mb{P}^1}T.$ Write $p_S:S\times_{\mb{P}^1}T\rightarrow S$ the projection map. Consider $f_i\in |-K_S|$, $i=1,..,a$, and $F_i:=p_S^{-1}(f_i)$, $i=1,..,a$. $F_i$ is simply the general smooth fiber of the fibration $S\times_{\mb{P}^1}T\xrightarrow{g} \mb{P}^1$. Also, consider $$C_a\in |3H+3\phi_a^{*}H+(a-2)K_S|,$$ which is a smooth and irreducible member of an ample and free linear system as shown by \cite[Proposition 2.6 (iii)]{Sa}. By the proof of \cite[Proposition 3.6]{Sa}, $C_a$ is a curve of genus $g(C_a)=9(27a^2-2a+5)-1$ and the map $C_a\xrightarrow{g'}\mb{P}^1 $ induced by the elliptic fibration $S\xrightarrow{g'}\mb{P}^1$ is of degree $18$. Let $\Gamma_a=p_S^{-1}(C_a)\cong C_a\times_{\mb{P}^1}T$. Then we let $\tilde{Y}_1$ be the blowup of $Y_1$ at $F_1,...,F_m$ and $X_1$ be the blowup of $\tilde{Y}_1$ at the strict transform of $\Gamma_a$. We also regard $D_1$ as its strict transform; hence, we have $D_1\subset X_1$. Let $X_2=Y_2=\mb{P}^2\times T$. The $d$-semistable simple normal crossing Calabi-Yau variety $X_0=X_1\amalg X_2/\sim$ is obtained by gluing $D_1$ and $D_2$ via $$X_1\supseteq S\times_{\mb{P}^1}T\xrightarrow{\phi_m \times id }S\times_{\mb{P}^1}T\subseteq X_2.$$ By the theorem of Namikawa-Kawamata \cite[Theorem 4.2]{NaKa}, $X_0$ admits a global smoothing $f: X\rightarrow \Delta$ where the central fiber is isomorphic to $X_0$. Then $X_m(a)$ is defined as the general fiber $X_q$ for $|q|$ small. 

The main difficulty in applying Theorem \ref{Corg} to $X(a)$ is to understand the cohomology ring of $S\times_{\mb{P}^1}T$. We will prove in the next section that the canonical morphism $$H^{\bullet}(S)\otimes_{H^{\bullet}(\mb{P}^1)}H^{\bullet}(T)\rightarrow H^{\bullet}(S\times_{\mb{P}^1}T)$$ induces isomorphism in degree $\leq m-2$ and is injective in degree $m-1$. More generally, we will prove that this type of Lefschetz property holds for the fiber product of two Lefschetz fibrations with disjoint critical locus (Theorem \ref{fib}). 

We recall the following lemma about the Hodge structure on the blowup of a K\"ahler manifold for later use.

\begin{lemma} (\cite[Theorem 7.31]{Voi1})\label{Hodgeblowup}

 Let $X$ be a compact complex manifold and $Z$ a compact complex submanifold of codimension $r$. Let $\pi:\tilde{X}\rightarrow X$ be the blowup of $X$ along $Z$ and $E$ the exceptional divisor. Denote $\iota:Z\rightarrow X$ and $\iota':E\rightarrow \tilde{X}$ the inclusion maps. Then \begin{enumerate}
     \item For each $k$, there is a pushout diagram of cohomology \begin{center}
    \begin{tikzcd}
H^k_Z(X) \arrow[r] \arrow[d] & H^k(X) \arrow[d,"\pi^*"] \\
H^k_{E}(\tilde{X}) \arrow[r] & H^k(\tilde{X}), 
\end{tikzcd}
\end{center}
where the vertical maps are induced by the pullback morphism.
\item If we identify $H^k_Z(X)\cong H^{k-2r}(Z)$ and $H^{k}_E(\tilde{X})\cong H^{k-2}(E)$ via the Thom isomorphism, then the pushout diagram above becomes \begin{center}
    \begin{tikzcd}
H^{k-2r}(Z) \arrow[r,"\iota_{!}"] \arrow[d,"\Phi"] & H^k(X) \arrow[d,"\pi^*"] \\
H^{k-2}(E) \arrow[r,"\iota'_{!}"] & H^k(\tilde{X}). 
\end{tikzcd}
\end{center}
 The horizontal maps are the Gysin morphisms and $\Phi$ can be described as follows: for $\alpha \in H^{k-2r}(Z)$, $$\Phi(\alpha)=(\pi|_{E}^*\alpha)\cup e(Q),$$ where $Q$ is the universal quotient bundle of $\pi|_E^*N_{Z/X}$ on $E\cong \mathbb{P}(N_{Z/X})$, i.e. $Q$ comes from the short exact sequence $$0\rightarrow \mathcal{O}_E(-1)\rightarrow \pi|_E^*N_{Z/X}\rightarrow Q\rightarrow 0,$$ and $e(Q)$ is the Euler class of $Q$.
\item Let $h=c_1(\mathcal{O}_E(1))\in H^2(E)$. If $X$ is K\"ahler, then the pushout diagram above induces the isomorphism of Hodge structures: $$H^k(X)\oplus \bigoplus_{i=0}^{r-2}H^{k-2i-2}(Z)\xrightarrow{\pi^*+\sum_i \iota'_!\circ(\cup h^i)\circ\pi|_E^* } H^k(\tilde{X}).$$
 \end{enumerate}
\end{lemma}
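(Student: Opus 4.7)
The plan is to establish the three parts in order, following the standard approach to the cohomology of a blowup in Voisin's textbook.

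For part (1), the crucial observation is that $\pi$ restricts to an isomorphism $\tilde{X} - E \cong X - Z$. Comparing via $\pi^*$ the long exact sequences of the pairs $(X, X-Z)$ and $(\tilde{X}, \tilde{X} - E)$, two out of every three vertical arrows are isomorphisms. A five-lemma style diagram chase (or equivalently, the mapping cone of $\pi^*$) then forces the displayed square on $H^k_Z(X),\ H^k(X),\ H^k_E(\tilde{X}),\ H^k(\tilde{X})$ to be a pushout.

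For part (2), the identification of the two horizontal arrows with the Gysin maps $\iota_!$ and $\iota'_!$ is a direct consequence of the naturality of the Thom isomorphism. The content is the formula for $\Phi$. Locally around $Z$, we may replace $X$ by the total space of $N_{Z/X}$ and $\tilde{X}$ by its blowup along the zero section; the exceptional divisor is $E = \mathbb{P}(N_{Z/X})$ and the normal bundle $N_{E/\tilde{X}}$ is the tautological line bundle $\mathcal{O}_E(-1)$, which fits into the tautological sequence $0 \to \mathcal{O}_E(-1) \to \pi|_E^* N_{Z/X} \to Q \to 0$. Applying the excess intersection formula to the (non-transverse) cartesian square relating the zero section of $N_{Z/X}$ and its proper transform then identifies $\Phi$ with $\alpha \mapsto \pi|_E^*(\alpha) \cup e(Q)$.

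For part (3), I would invoke the projective bundle formula $H^\bullet(E) \cong \bigoplus_{i=0}^{r-1} h^i \cdot \pi|_E^* H^\bullet(Z)$. The pushout from (1) combined with the formula from (2) shows that $H^k(\tilde{X})$ is generated by $\pi^* H^k(X)$ together with the classes $\iota'_!(h^i \cdot \pi|_E^* \beta)$ for $0 \le i \le r-1$. Since $e(Q) = h^{r-1} + c_1(\pi|_E^* N_{Z/X}) h^{r-2} + \cdots$ as a polynomial in $h$ with coefficients pulled back from $Z$, the top piece $i = r-1$ reproduces $\pi^* \iota_!\alpha$ modulo classes of lower $h$-degree and is therefore absorbed into the image of $\pi^*$. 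The remaining summands $0 \le i \le r-2$ contribute independent classes, and the Kähler hypothesis on $X$ is used to ensure that $\pi^*$ is injective on cohomology and that all maps involved are strict morphisms of Hodge structures, upgrading the direct sum decomposition to an isomorphism of Hodge structures.

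The main obstacle is the excess intersection computation in part (2): one must correctly identify the excess normal bundle (the quotient of $\pi|_E^* N_{Z/X}$ by the conormal bundle of $E$ in $\tilde{X}$) with the universal quotient $Q$, which amounts to verifying that the conormal sequence of $E \subset \tilde{X}$ restricts to the tautological sequence on $\mathbb{P}(N_{Z/X})$. Once parts (1) and (2) are in hand, part (3) is essentially a bookkeeping exercise with the projective bundle formula.
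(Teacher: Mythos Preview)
Your proposal is correct, and for parts (1) and (3) it follows the standard textbook route that the paper simply cites from Voisin without further comment.

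For part (2), however, the paper takes a genuinely different path. Rather than invoking the excess intersection formula, the paper works directly with Thom classes in compact vertical cohomology following Bott--Tu. The key step is the observation that $\pi|_E^* N_{Z/X} \cong \mathcal{O}_E(-1) \oplus Q$ as \emph{smooth} vector bundles, so the Thom class of $\pi|_E^* N_{Z/X}$ factors as $p_1^* Th(\mathcal{O}_E(-1)) \cup p_2^* Th(Q)$. One then compares the two Thom isomorphisms in the commutative square and uses that the restriction of $Th(Q)$ to the zero section is the Euler class $e(Q)$. Your excess intersection argument packages exactly the same content---the excess bundle in the non-transverse square is precisely $Q$---but treats the formula as a black box, whereas the paper unwinds it explicitly at the level of differential forms. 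The paper's approach is more self-contained (no appeal to intersection-theoretic machinery) at the cost of a slightly longer chase; your approach is cleaner once the excess intersection formula is taken for granted.
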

\begin{proof}
    For clarity, we explain why the map $\Phi$ can be described as in $(2)$ above.
 By \cite[Chapter I, \S 6]{BottTu}, the local cohomology $H^k_Z(X)=H^k(N_{Z/X}, N_{Z/X}-Z)$ can be identified with the compact vertical cohomology $H^k_{cv}(N_{Z/X})$, which is defined by using the complex of smooth differential forms on $N_{Z/X}$ with compact support in the vertical direction. This identification follows from the fact that the local cohomology $H^k_Z(X)$ is isomorphic to the cohomology of the Thom space of the normal bundle $N_{Z/X}$ and that the Thom space is simply the one-point compactification of $N_{Z/X}$. Under this identification, the Thom isomorphism $H^{k-2r}(Z)\rightarrow H^k_{cv}(N_{Z/X})$ is given by the composition of the cup product with the Thom class $Th(N_{Z/X})\in H^{2r}_{cv}(N_{Z/X})$ and the pullback morphism. Since $\pi|_E^*N_{Z/X}\cong \mathcal{O}_{E}(-1)\oplus Q$ as smooth vector bundles, by \cite[Chapter I, Proposition 6.19]{BottTu}, we have the equality of Thom classes $$Th(\pi|_E^*N_{Z/X})=p_1^*Th(\mathcal{O}_{E}(-1))\cup p_2^*Th(Q),$$ where $p_i$ are the projection maps $\pi|_E^*N_{Z/X}\rightarrow \mathcal{O}_{E}(-1)$ and $\pi|_E^*N_{Z/X}\rightarrow  Q$. We have the commutative diagram     
\begin{center}
\begin{tikzcd}
H^{k-2r}(Z) \arrow[rr,"\cup Th(N_{Z/X})"] \arrow[d,"\Phi"] && H^k_{cv}(N_{Z/X}) \arrow[d,"g*"] \\
H^{k-2}(E) \arrow[rr,"\cup Th(\mathcal{O}_{E}(-1))"] && H^k_{cv}(\mathcal{O}_{E}(-1)),
\end{tikzcd}
\end{center}
where the horizontal maps are the Thom isomorphisms and the vertical map on the right comes from the pullback along the map $g: \mathcal{O}_{E}(-1)\hookrightarrow \pi|_{E}^*N_{Z/X}\rightarrow N_{Z/X}$. For all $\alpha\in H^{k-2r}(Z)$, we have $$g^*(\alpha\cup  Th(N_{Z/X}))=\pi^*\alpha \cup (Th(\pi|_E^*N_{Z/X})|_{\mathcal{O}_{E}(-1)})$$$$=\pi|_E^*\alpha \cup (p_2^*Th(Q))|_{\mathcal{O}_{E}(-1)}\cup Th(\mathcal{O}_{E}(-1)).$$
Also, We have the commutative diagram     
\begin{center}
\begin{tikzcd}
\mathcal{O}_{E}(-1) \arrow[r] \arrow[d] & \pi|_{E}^*N_{Z/X}= \mathcal{O}_{E}(-1)\oplus Q\arrow[d] \\
E \arrow[r] & Q,
\end{tikzcd}
\end{center}
where the vertical maps are projection maps, the horizontal map on the top is the inclusion map of vector bundles, and the horizontal map on the bottom is the embedding as the zero section. Thus, $(p_2^*Th(Q))|_{\mathcal{O}_{E}(-1)}$ is the same as (the pullback) of the restriction of $Th(Q)$ to $E$. It follows from \cite[Chapter I, Proposition 6.41]{BottTu} that $Th(Q)|_{E}=e(Q)$. Hence, $\Phi(\alpha)=\pi|_E^*\alpha\cup e(Q).$ This finishes the proof.

\end{proof}

In our situation, we need the following lemma, which describes the restriction morphism and Gysin morphism between the blowup and the strict transform.

\begin{lemma}\label{gysin}
Let $Z\xrightarrow{\iota_2} Y\xrightarrow{\iota_1} X$ be the inclusion of compact complex manifolds of dimension $m-2,\ m-1, \ m$ respectively. Consider $\tilde{X}=\textup{Bl}_{Z}X$ , $Y\cong \tilde{Y}=\textup{Bl}_{Z}Y$ and the inclusion $\tilde{Y}\xrightarrow{\tilde{\iota}_1} \tilde{X}$. Then for all $k$ the restriction map $H^k(\tilde{X})\xrightarrow{\tilde{\iota}_1^*}H^{k}(\tilde{Y})$ can be identified with $$H^{k}(X)\oplus H^{k-2}(Z)\xrightarrow{(\iota_1^*,\ (\iota_2)_{!})} H^k(Y),$$ where $\iota_1^*$ is the restriction map and $(\iota_2)_{!}$ is the Gysin morphism.

Dually, for all $k$ the Gysin morphism $H^{k}(\tilde{Y})\xrightarrow{(\tilde{\iota}_1)_{!}}H^{k+2}(\tilde{X})$ can be identified with $$H^{k}(Y)\xrightarrow{((\iota_1)_{!},\ -\iota_2^*)}H^{k+2}(X)\oplus H^{k}(Z),$$ where $(\iota_1)_{!}$ is the Gysin morphism and $\iota_2^*$ is the restriction map.
\end{lemma}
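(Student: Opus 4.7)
The plan is to pass through the exceptional divisor $E \subset \tilde{X}$ of the blowup $\pi: \tilde{X} \to X$ and to apply the base change formula for Gysin maps along the transversal intersection $E \cap \tilde{Y}$. The geometry is clarified by the short exact sequence of normal bundles
$$0 \to N_{Z/Y} \to N_{Z/X} \to N_{Y/X}|_Z \to 0,$$
whose line subbundle $N_{Z/Y}$ determines a section $s: Z \to \mathbb{P}(N_{Z/X}) = E$. A local coordinate calculation shows that $s(Z) = E \cap \tilde{Y}$, that $E$ and $\tilde{Y}$ meet transversally in $\tilde{X}$, and that under the isomorphism $\pi|_{\tilde{Y}}: \tilde{Y} \xrightarrow{\sim} Y$ the inclusion $E \cap \tilde{Y} \hookrightarrow \tilde{Y}$ becomes $\iota_2: Z \hookrightarrow Y$, while the map $\pi|_E \circ s: Z \to Z$ is the identity.

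Using the decomposition from Lemma \ref{Hodgeblowup}(3), which for codimension $r = 2$ reads
$$H^k(\tilde{X}) \cong H^k(X) \oplus H^{k-2}(Z), \qquad (\alpha,\beta) \mapsto \pi^*\alpha + \iota'_!(\pi|_E^*\beta),$$
I would compute $\tilde{\iota}_1^*$ on each summand. Since $\pi|_{\tilde{Y}}$ is an isomorphism, $\tilde{\iota}_1^* \pi^*\alpha = (\pi|_{\tilde{Y}})^*\iota_1^*\alpha = \iota_1^*\alpha$. Writing $j: E \cap \tilde{Y} \to E$ and $j': E \cap \tilde{Y} \to \tilde{Y}$ for the obvious inclusions, the transversal base change formula gives $\tilde{\iota}_1^*\iota'_!(\pi|_E^*\beta) = j'_!(j^*\pi|_E^*\beta)$; under the identifications above $j'$ becomes $\iota_2$ and $\pi|_E \circ j = \mathrm{id}_Z$, so this equals $(\iota_2)_!\beta$, yielding the first formula.

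For the Gysin map I would extract each summand of $(\tilde{\iota}_1)_!\gamma \in H^{k+2}(\tilde{X})$ under the same decomposition. One checks by routine projection-formula manipulations, together with $(\iota')^*\iota'_!\eta = -\eta \cup h$ (since $N_{E/\tilde{X}} = \mathcal{O}_E(-1)$) and $(\pi|_E)_*(1)=0$, $(\pi|_E)_*(h) = 1$, that any $\delta = \pi^*a + \iota'_!(\pi|_E^*b)$ satisfies $\pi_*\delta = a$ and $(\pi|_E)_*(\iota')^*\delta = -b$. Applying $\pi_*$ to $(\tilde{\iota}_1)_!\gamma$ and using $\pi \circ \tilde{\iota}_1 = \iota_1 \circ \pi|_{\tilde{Y}}$ with $\pi|_{\tilde{Y}}$ an isomorphism gives the first component $(\iota_1)_!\gamma$; applying $(\pi|_E)_*(\iota')^*$ and invoking the transversal base change once more gives $(\pi|_E)_* j_! \iota_2^*\gamma = \iota_2^*\gamma$ (since $j = s$ is a section of $\pi|_E$, so $(\pi|_E)_* j_! = \mathrm{id}$), so the second component is $-\iota_2^*\gamma$. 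The main technical care lies in verifying transversality and the explicit identification $E\cap\tilde{Y} = s(Z)$, which is essentially forced by a local computation in coordinates adapted to $Z \subset Y \subset X$; everything else is standard functoriality. As a shortcut, once $\tilde{\iota}_1^*$ is known, $(\tilde{\iota}_1)_!$ is forced by the adjunction $\int_{\tilde{X}} \tilde{\iota}_1^*\delta \cup \gamma = \int_{\tilde{Y}} \delta \cup (\tilde{\iota}_1)_!\gamma$ combined with the mutual orthogonality of the two summands of $H^\bullet(\tilde{X})$ under the cup-product pairing.
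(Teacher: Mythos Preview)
Your proof is correct and follows essentially the same route as the paper. Both arguments hinge on the identification $E\cap\tilde{Y}\cong Z$ inside $\tilde{Y}\cong Y$ and on the compatibility of Gysin maps with this intersection; the paper packages step two via local cohomology and Thom isomorphisms (factoring $H^k_E(\tilde{X})\to H^k(\tilde{X})\to H^k(\tilde{Y})$ through $H^k_{E\cap\tilde{Y}}(\tilde{Y})\cong H^{k-2}(Z)$), whereas you phrase it as a transversal base change for Gysin maps, and for the dual statement the paper simply invokes Poincar\'e duality (your ``shortcut'') rather than your explicit component extraction.
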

\begin{proof}
  Let $E$ denote the exceptional divisor of the blowup morphism $\tilde{X}\xrightarrow{f} X$. By Lemma \ref{Hodgeblowup} (1), the cohomology $H^k(\tilde{X})$ is computed by the pushout diagram 

\begin{center}
    \begin{tikzcd}
H^k_Z(X) \arrow[r] \arrow[d] & H^k(X) \arrow[d,"f^*"] \\
H^k_{E}(\tilde{X}) \arrow[r] & H^k(\tilde{X}). 
\end{tikzcd}
\end{center}

Note that the composition map $H^k(X)\xrightarrow{f^*} H^k(\tilde{X})\xrightarrow{\tilde{\iota}_1^*} H^k(\tilde{Y})$ is just the restriction map $H^k(X)\xrightarrow{\iota_1^*}H^k(Y)$. Also, the composition map $H^k_E(\tilde{X})\rightarrow H^k(\tilde{X})\xrightarrow{\tilde{\iota}_1^*} H^k(\tilde{Y})$ factories as 
$H^k_E(\tilde{X})\rightarrow H^k_{E\cap \tilde{Y}}(\tilde{Y})\rightarrow H^k(\tilde{Y})$. The last map $H^k_{E\cap \tilde{Y}}(\tilde{Y})\rightarrow H^k(\tilde{Y})$ is isomorphic to $H^k_Z(Y)\rightarrow H^k(Y)$, which is simply the Gysin morphism $H^{k-2}(Z)\rightarrow H^k(Y)$ by the Thom isomorphism theorem. Hence, the first assertion follows.

The second dual statement follows from taking the Poincar\'e duality of the first statement. The only thing to notice is the negative sign of the restriction map, which comes from the isomorphism $\mathcal{O}_{\tilde{X}}(E)|_{E}=\mathcal{O}_E(-1)$.
\end{proof}

Now, we are ready to prove Theorem \ref{Sa}.

\begin{proof}[Proof of Theorem \ref{Sa}]

As before, in order to prove the $\partial\overline{\partial}$-lemma on $X_m(a)$ by Theorem \ref{Corg}, we need to show that  $${}_{W^{St}}E_2^{-1,k+1}=\textup{ker}\left(H^{k-1}(S\times_{\mb{P}^1}T)(-1)\xrightarrow{-\gamma} H^{k+1}(X_1)\oplus H^{k+1}(X_2) \right)$$ and $${}_{W^{St}}E_2^{1,k-1}=\textup{coker}\left( H^{k-1}(X_1)\oplus H^{k-1}(X_2)\xrightarrow{\theta}H^{k-1}(S\times_{\mb{P}^1}T)\right).$$ 
 are isomorphic via the map induced by $(2\pi \sqrt{-1})I$ for all $k\geq 0$. By duality, it suffices to verify it for $k\leq m$. 
 
 We denote the cohomology class of pullback of $\mathcal{O}_{\mb{P}^2}(1)$ on $S$ by $h$, and the cohomology class of the pullback of $\mathcal{O}_{\mb{P}^1}(1)$ on $S$ by $f'$. We know that $$f'=[-K_S]=3h-\sum_{i=1}^9 e_i.$$ We also denote the cohomology class of pullback of $\mathcal{O}_{\mb{P}^{m-2}}(1)$ on $T$ by $\eta$, and the cohomology class of the pullback of $\mathcal{O}_{\mb{P}^1}(1)$ on $T$ by $f^{''}$. Then the ring $H^{\bullet}(S)\otimes_{H^{\bullet}(\mb{P}^1)}H^{\bullet}(T)$ is simply the quotient of $H^{\bullet}(S)\otimes_{\C} H^{\bullet}(T)$ by identifying the class $f'$ with $f''$. We will denote this class in the quotient by $f$. 
  
 We first consider the case $k<m$. In this case, since the canonical morphism $$H^{\bullet}(S)\otimes_{H^{\bullet}(\mb{P}^1)}H^{\bullet}(T)\rightarrow H^{\bullet}(S\times_{\mb{P}^1}T)$$ induces isomorphism in degree $\leq m-2$ (cf. Theorem \ref{fib}), We see that $H^{k-1}(S\times_{\mb{P}^1}T)$ is generated by $$\left\{\begin{array}{cc}
     \eta^{\frac{k-1}{2}},\ H^2(S)\cup \eta^{\frac{k-3}{2}}, \ h^2\cup\eta^{\frac{k-5}{2}},  & \textup{ if } k \textup{ is odd, } k\neq m-1 \\
    H^{m-2}_{\textup{prim}}(T),\ \eta^{\frac{k-1}{2}},\ H^2(S)\cup \eta^{\frac{k-3}{2}}, \ h^2\cup\eta^{\frac{k-5}{2}},  & \textup{ if } k \textup{ is odd, } k= m-1 \\
     0,  & \textup{ if } k \textup{ is even, } k\neq m-1 \\
    H^{m-2}(T),  & \textup{ if } k \textup{ is even, } k= m-1 \\
 \end{array}\right..$$ 

 Also, if we denote $\iota: S\times_{\mb{P}^1}T\rightarrow \mb{P}^2\times T$ the inclusion map, then it is not hard to see that the Gysin morphism $$H^{k-1}(S\times_{\mb{P}^1}T)\xrightarrow{\iota_{!}}H^{k+1}(\mb{P}^2\times T)$$ coincides with the map $$H^{\bullet}(S)\otimes_{H^{\bullet}(\mb{P}^1)}H^{\bullet}(T)\rightarrow H^{\bullet}(\mb{P}^2\times \mb{P}^1)\otimes_{H^{\bullet}(\mb{P}^1)}H^{\bullet}(T)= H^{\bullet}(\mb{P}^2\times T),$$ from degree $k-1$ part to $k+1$ part, induced by the Gysin morphism associated to the inclusion $S\rightarrow \mb{P}^2\times\mb{P}^1$. Since $$\textup{ker}\left(H^2(S)\rightarrow H^4(\mb{P}^2\times\mb{P}^1)\right)=\langle e_j-e_i\mid i\neq j\rangle=\langle h, f'\rangle^{\perp},$$ one sees that $$\textup{ker}\left(H^{k-1}(S\times_{\mb{P}^1}T)\xrightarrow{(\iota_{!},-\iota_{!}\circ((\phi_a^{-1})^{*}))}H^{k+1}(\mb{P}^2\times T)\oplus H^{k+1}(\mb{P}^2\times T)\right)$$$$=\left\{\begin{array}{cc}
 \langle h,\ \phi^{*}_a h, \ f\rangle^{\perp}\cup \eta^{\frac{k-3}{2}},& \textup{ if } k \textup{ is odd }\\
 0, & \textup{ if } k \textup{ is even }
 \end{array}\right.. $$ Moreover, since for $i=1,...,a$, the cohomology class of the divisor $F_i$ is $-f$, and the cohomology class of the divisor $\Gamma_a$ is $3h+3\phi_a^{*}h+(a-2)(-f)$, one sees that when $k$ is odd, $$\langle h,\ \phi^{*}_a h, \ f\rangle^{\perp}\cup \eta^{\frac{k-3}{2}}\subseteq \textup{ker}(H^{k-1}(S\times_{\mb{P}^1}T)\rightarrow H^{k-1}(\Gamma_a)\oplus \bigoplus_{i=1}^a H^{k-1}(F_i)).$$ Thus, we conclude by Lemma \ref{gysin} that $$\textup{ker}\left(H^{k-1}(S\times_{\mb{P}^1}T)\xrightarrow{-\gamma}H^{k+1}(X_1)\oplus H^{k+1}(X_2)\right)$$$$=\left\{\begin{array}{cc}
 \langle h,\ \phi^{*}_a h, \ f\rangle^{\perp}\cup \eta^{\frac{k-3}{2}},& \textup{ if } k \textup{ is odd }\\
 0, & \textup{ if } k \textup{ is even }
 \end{array}\right.. $$

On the other hand, since the restriction morphism $$H^{k-1}(\mb{P}^2\times T)\xrightarrow{\iota^{*}}H^{k-1}(S\times_{\mb{P}^1}T)$$ coincides with the map $$ H^{\bullet}(\mb{P}^2\times T)=H^{\bullet}(\mb{P}^2\times \mb{P}^1)\otimes_{H^{\bullet}(\mb{P}^1)}H^{\bullet}(T)\rightarrow H^{\bullet}(S)\otimes_{H^{\bullet}(\mb{P}^1)}H^{\bullet}(T) $$ from degree $k-1$ part to $k-1$ part, induced by the restriction morphism associated to the inclusion $S\rightarrow \mb{P}^2\times\mb{P}^1$, we can compute that 
$$\textup{im}\left(H^{k-1}(\mb{P}^2\times T)\oplus H^{k-1}(\mb{P}^2\times T)\xrightarrow{(-\iota^{*},(\phi_a^{*})\circ(\iota^{*}))}H^{k-1}(S\times_{\mb{P}^1}T)\right)$$ is generated by 
$$\left\{\begin{array}{cc}
     \eta^{\frac{k-1}{2}},\  \langle h,\ \phi^{*}_a h, \ f\rangle\cup \eta^{\frac{k-3}{2}}, \ h^2\cup\eta^{\frac{k-5}{2}},  & \textup{ if } k \textup{ is odd, } k\neq m-1 \\
    H^{m-2}_{\textup{prim}}(T),\ \eta^{\frac{k-1}{2}},\ \langle h,\ \phi^{*}_a h, \ f\rangle\cup \eta^{\frac{k-3}{2}}, \ h^2\cup\eta^{\frac{k-5}{2}},  & \textup{ if } k \textup{ is odd, } k= m-1 \\
     0,  & \textup{ if } k \textup{ is even, } k\neq m-1 \\
    H^{m-2}(T),  & \textup{ if } k \textup{ is even, } k= m-1 \\
 \end{array}\right..$$ 
 
 Also, since $\Gamma_a=C_a\times_{\mb{P}^1}T$ is a smooth divisor of $C_a\times \mb{P}^{m-2}$ corresponding to the ample line bundle $$\left(\pi_{C_a}^{*}(g')^*\mathcal{O}_{\mb{P}^1}(1)\right)\otimes\left(  \pi_{\mb{P}^{m-2}}^{*}\mathcal{O}_{\mb{P}^{m-2}}(1)\right),$$ it follows from the Lefschetz hyperplane theorem that the following diagram \begin{center}
     \begin{tikzcd}
H^{\bullet}(C_a)\otimes_{H^{\bullet}(\mb{P}^1)}H^{\bullet}(\mb{P}^1\times \mb{P}^{m-2}) \arrow[d] \arrow[r, equal] & H^{\bullet}(C_a\times \mb{P}^{m-2}) \arrow[d] \\
H^{\bullet}(C_a)\otimes_{H^{\bullet}(\mb{P}^1)}H^{\bullet}(T) \arrow[r]                                                          & H^{\bullet}(C_a\times_{\mb{P}^1}T)
\end{tikzcd}
 \end{center}
 induces isomorphism in degree $\leq m-3$, and the two vertical arrows are injective in degree $m-2$. Indeed, the horizontal arrow at the bottom is also injective in degree $m-2$. To see this, we can compute the composition of the restriction map and the Gysim morphism: $$H^{m-2}(C_a\times T)\rightarrow H^{m-2}(C_a\times_{\mb{P}^1}T)\rightarrow H^{m}(C_a\times T),$$ which coincides with the cup product with $c_1((g')^*\mathcal{O}_{\mb{P}^1}(1))+f$. Then we find that the kernel of the composition is generated by $$\left\{\begin{array}{cc}
     H^1(C_a)\cup f\cup\eta^{\frac{m-5}{2}}, & \textup{ if }m \textup{ is odd} \\
     c_1((g')^*\mathcal{O}_{\mb{P}^1}(1))\cup  f\cup\eta^{\frac{m-6}{2}},\ ( c_1((g')^*\mathcal{O}_{\mb{P}^1}(1))-f)\cup\eta^{\frac{m-4}{2}}, & \textup{ if }m \textup{ is even}
 \end{array}\right. ,$$ which are simply the generators of the degree $m-2$ part of the kernel of the quotient map $$H^{\bullet}(C_a)\otimes_{\C}H^{\bullet}(T)\rightarrow H^{\bullet}(C_a)\otimes_{H^{\bullet}(\mb{P}^1)}H^{\bullet}(T).$$ In particular, it is not hard to see that the Gysin morphism $$H^{k-3}(C_a\times_{\mb{P}^1}T)\rightarrow H^{k-1}(S\times_{\mb{P}^1}T)$$ coincides with the map $$H^{\bullet}(C_a)\otimes_{H^{\bullet}(\mb{P}^1)}H^{\bullet}(T)\rightarrow H^{\bullet}(S)\otimes_{H^{\bullet}(\mb{P}^1)}H^{\bullet}(T),$$ from degree $k-3$ part to $k-1$ part, induced by the Gysin morphism associated to the inclusion $C_a\rightarrow S$. As a result, the image of the Gysin morphism is given by $$\textup{im}\left(H^{k-3}(C_a\times_{\mb{P}^1}T)\rightarrow H^{k-1}(S\times_{\mb{P}^1}T)\right)$$$$=\left\{\begin{array}{cc} 
 \textup{Span}\left\{\left(3h+3\phi_a^{*}h+(a-2)(-f)\right)\cup \eta^{\frac{k-3}{2}},h^2\cup \eta^{\frac{k-5}{2}}\right\},& \textup{ if } k \textup{ is odd }\\
 0,& \textup{ if } k \textup{ is even }\end{array}.\right.$$ Also, write $F_i= S_t\times T_t$ for some general $t\in \mb{P}^1$ and $\iota_t:S_t\times T_t\rightarrow S\times_{\mb{P}^1}T$ the inclusion map, where $S_t$ (resp. $T_t$) is the fiber of the Lefschetz fibration $S\xrightarrow{g_1} \mb{P}^1 $ (resp. $T\xrightarrow{g_2} \mb{P}^1$) at $t$. Then by the theorem of the fixed part (cf. \cite[Theorem 4.23]{PS}), for all $l\geq 0$, the image of the restriction map $$\textup{im}\left(H^l(S\times_{\mb{P}^1}T)\xrightarrow{\iota_t^*} H^l( S_t\times T_t)\right)$$ coincides with the cohomology classes which are invariant under the monodromy action. Since the critical locus of the two Lefschetz fibrations $g_1$ and $g_2$ are disjoint (by the generality of $S$ and $T$), the monodromy of $S\times_{\mb{P}^1}T\xrightarrow{g}\mb{P}^1 $ acts on $$H^l( S_t\times T_t)=\bigoplus_{i+j=l}H^i(S_t)\otimes H^j(T_t)$$ by the product action of the monodromy group of $g_1$ and $g_2$. Hence, we see that $$\textup{im}\left(H^l(S\times_{\mb{P}^1}T)\xrightarrow{\iota_t^*} H^l( S_t\times T_t)\right)$$$$=\bigoplus_{i+j=l}\textup{im}\left( H^i(S)\rightarrow H^i(S_t)\right)\otimes \textup{im}\left( H^j(T)\rightarrow H^j(T_t)\right)$$$$=\bigoplus_{i+j=l}H^i_{\textup{fixed}}(S_t)\otimes H^j_{\textup{fixed}}(T_t),$$ where $H^i_{\textup{fixed}}(S_t)$ and $H^j_{\textup{fixed}}(T_t)$ are the fixed cohomology of the Lefschetz fibrations $g_1$ and $g_2$ (cf. \cite[(C-11) and Theorem C.23]{PS}). This vector space has dimension $$\left\{\begin{array}{cc}
    0,  & \textup{ if } l \textup{ is odd} \\
    1,  & \textup{ if } l=0, \ 2m-4\\
    2,  & \textup{ if } l \textup{ is even, } 0< l< 2m-4  
 \end{array}\right..
 $$ Hence, by the Poincar\'e duality, the image of the Gysin morphism $$\textup{im}\left( H^l(S_t\times T_t)\xrightarrow{\iota_{t!}} H^{l+2}(S\times_{\mb{P}^1}T)\right)$$ has the same dimension as above. In particular, for all $l\geq 0$, $$\textup{im}\left( H^l(S_t\times T_t)\xrightarrow{\iota_{t!}} H^{l+2}(S\times_{\mb{P}^1}T)\right)$$$$=\textup{im}\left( H^l(S\times_{\mb{P}^1}T)\xrightarrow{\iota_t^*}H^l(S_t\times T_t)\xrightarrow{\iota_{t!}} H^{l+2}(S\times_{\mb{P}^1}T)\right)$$
 $$=\textup{im}\left( H^l(S\times_{\mb{P}^1}T)\xrightarrow{\cup f} H^{l+2}(S\times_{\mb{P}^1}T)\right).$$ It follows that the image of the Gysin morphism is given by $$\textup{im}\left( H^{k-3}(F_i)\rightarrow H^{k-1}(S\times_{\mb{P}^1}T)\right)$$$$=\left\{\begin{array}{cc} 
 \textup{Span}\left\{f\cup \eta^{\frac{k-3}{2}},h^2\cup \eta^{\frac{k-5}{2}}\right\},& \textup{ if } k \textup{ is odd }\\
 0,& \textup{ if } k \textup{ is even }\end{array}.\right.$$
 Thus, we conclude by Lemma \ref{gysin} that $$\textup{im}\left(H^{k-1}(X_1)\oplus H^{k-1}(X_2)\xrightarrow{\theta} H^{k-1}(S\times_{\mb{P}^1}T)\right)$$$$=\textup{im}\left(H^{k-1}(\mb{P}^2\times T)\oplus H^{k-1}(\mb{P}^2\times T)\xrightarrow{(-\iota^{*},(\phi_a^{*})\circ(\iota^{*}))}H^{k-1}(S\times_{\mb{P}^1}T)\right)$$ and there is an orthogonal splitting $$ H^{k-1}(S\times_{\mb{P}^1}T)=\textup{ker}(-\gamma)\oplus \textup{im}(\theta).$$ This proves that ${}_{W^{St}}E_2^{-1,k+1}$ and ${}_{W^{St}}E_2^{1,k-1}$ 
 are isomorphic via the map induced by $(2\pi \sqrt{-1})I$ for $k<m$. 
 
 Now we turn to the case $k=m$. In this case, due to the dimension reason, it suffices to show that the map $$H^{m-1}(S\times_{\mb{P}^1}T)\xrightarrow{-\gamma}H^{m+1}(X_1)\oplus H^{m+1}(X_2)$$ is injective on the subspace $$\textup{im}\left(H^{m-1}(X_1)\oplus H^{m-1}(X_2)\xrightarrow{\theta} H^{m-1}(S\times_{\mb{P}^1}T)\right).$$ Note that the restriction morphism $$H^{m-1}(\mb{P}^2\times T)\xrightarrow{\iota^{*}}H^{m-1}(S\times_{\mb{P}^1}T),$$ the Gysin morphism $$H^{m-3}(F_i)\rightarrow H^{m-1}(S\times_{\mb{P}^1}T)$$ and $$H^{m-3}(\Gamma_a)\rightarrow H^{m-1}(S\times_{\mb{P}^1}T)$$ factor through the degree $m-1$ part of $$H^{\bullet}(S)\otimes_{H^{\bullet}(\mb{P}^1)}H^{\bullet}(T).$$ As a consequence, the argument above shows that $$\textup{im}\left(H^{m-1}(X_1)\oplus H^{m-1}(X_2)\xrightarrow{\theta} H^{m-1}(S\times_{\mb{P}^1}T)\right)$$ is generated by $$\left\{\begin{array}{cc}
     \eta^{\frac{m-1}{2}},\  \langle h,\ \phi^{*}_a h, \ f\rangle\cup \eta^{\frac{m-3}{2}}, \ h^2\cup\eta^{\frac{m-5}{2}},  & \textup{ if } m \textup{ is odd }  \\
     0,  & \textup{ if } m \textup{ is even } \\
    \end{array}\right..$$ On the other hand, the argument above also shows that the restriction of $-\gamma$ on the degree $m-1$ part of $$H^{\bullet}(S)\otimes_{H^{\bullet}(\mb{P}^1)}H^{\bullet}(T)$$ has kernel $$\left\{\begin{array}{cc}
 \langle h,\ \phi^{*}_a h, \ f\rangle^{\perp}\cup \eta^{\frac{m-3}{2}},& \textup{ if }m \textup{ is odd }\\
 0, & \textup{ if } m \textup{ is even }
 \end{array}\right.. $$ The assertion is immediate. In conclusion, we have proved that $W^{St}=W(N)$ on the limiting cohomology, and hence by Theorem \ref{Corg}, $X(a)$ satisfies the $\partial\overline{\partial}$-lemma.

 Now, we turn to the Hodge index on the middle cohomology of $X(a)$. We first consider the case that $m$ is odd. Then the preceding paragraph shows that we have an orthogonal splitting $$H^{m-1}(S\times_{\mb{P}^1}T)=\textup{im}(\theta)\oplus \left( \langle h,\ \phi^{*}_a h, \ f\rangle^{\perp}\cup \eta^{\frac{m-3}{2}} \right)\oplus W^{\perp}$$ and $${}_{W^{St}}E_2^{-1,m+1}=\left( \langle h,\ \phi^{*}_a h, \ f\rangle^{\perp}\cup \eta^{\frac{m-3}{2}} \right)\oplus W^{\perp},$$ where $W^{\perp}$ is the orthogonal complement of the degree $m-1$ part of $H^{\bullet}(S)\otimes_{H^{\bullet}(\mb{P}^1)}H^{\bullet}(T)$ in $H^{m-1}(S\times_{\mb{P}^1}T)$. If we let $L$ be the Lefschetz operator on $S\times_{\mb{P}^1}T$ associated to the K\"ahler form $h+f+\eta$, then $LH^{m-3}(S\times_{\mb{P}^1}T)$ is contained in the $m-1$ part of $H^{\bullet}(S)\otimes_{H^{\bullet}(\mb{P}^1)}H^{\bullet}(T)$, and hence $W^{\perp}\subseteq H^{m-1}_{\textup{prim}}(S\times_{\mb{P}^1}T).$ Also, we have $$\langle h,\ \phi^{*}_a h, \ f\rangle^{\perp}\cup \eta^{\frac{m-3}{2}}=L^{\frac{m-3}{2}}\left(\langle h,\ \phi^{*}_a h, \ f\rangle^{\perp}\right),$$ and $\langle h,\ \phi^{*}_a h, \ f\rangle^{\perp}\subseteq H^{2}_{\textup{prim}}(S\times_{\mb{P}^1}T).$ This shows that the index of $S(C\bullet,\overline{(-N)\bullet})$ is positive on ${}_{W^{St}}E_2^{-1,m+1}$ if $m\equiv 3\ (\textup{mod }4)$, and there are $7$ dimensional negative classes in the $(\frac{m+1}{2},\frac{m+1}{2})$ part of ${}_{W^{St}}E_2^{-1,m+1}$ if $m\equiv 1\ (\textup{mod }4)$.
 Also, $${}_{W^{St}}E_2^{0,m}=\frac{\textup{ker}\left(H^m(X_1)\oplus H^m(X_2)\xrightarrow{\theta} H^m(S\times_{\mb{P}^1}T)\right)}{\textup{im}\left(H^{m-2}(S\times_{\mb{P}^1}T)\xrightarrow{-\gamma}H^m(X_1)\oplus H^m(X_2)\right)}.$$ Note that since $m$ is odd, the Gysin map $H^{m-2}(S\times_{\mb{P}^1}T)\rightarrow H^m(\mb{P}^2\times T)$ can be identified with the isomorphism $H^0(S)\otimes H^{m-2}(T)\rightarrow H^{2}(\mb{P}^2)\otimes H^{m-2}(T)$ given by the cup product with $3h$. Let $L$ denote any Lefschetz operator on $F_i$ or $\Gamma_a$. Then we have the Lefschetz decomposition $$H^{m-2}(\Gamma_a)=H^{m-2}_{\textup{prim}}(\Gamma_a)\oplus L^{\frac{m-3}{2}}H^1(\Gamma_a),$$ where $$H^1(\Gamma_a)\cong H^1(C_a)\otimes H^0(\mb{P}^{m-2}).$$ Write $$H^{m-2}_{\textup{prim}}(\Gamma_a)=H^{0}(C_a)\otimes H^{m-2}(T)\oplus U,$$ where $U$ is the orthogonal complement of $H^{0}(C_a)\otimes H^{m-2}(T)$. Since $F_i\cong S_t\times T_t$, where $S_t$ is an elliptic curve and $T_t$ is a hypersurface of $\mb{P}^{m-2}$ of degree $m-1$, we have the Lefschetz decomposition $$H^{m-2}(F_i)=H^{m-2}_{\textup{prim}}(F_i)\oplus L^{\frac{m-3}{2}}H^1(F_i),$$ where $$H^1(F_i)\cong H^1(S_t)\otimes H^0(\mb{P}^{m-2}).$$ From this, one easily sees that $${}_{W^{St}}E_2^{0,m}=U'\oplus U\oplus L^{\frac{m-3}{2}}H^1(\Gamma_a) \oplus \bigoplus_{i=1}^{a}H^{m-2}(F_i),$$ where $$U'\subset H^2(\mb{P}^2)\otimes H^{m-2}(T)\oplus H^{0}(C_a)\otimes H^{m-2}(T)\subset H^{m-2}(X_1)$$ is the subspace $$\left\{(6h\cup a,a)\mid\ a\in H^{m-2}(T)\right\}.$$ This shows that the index of $S(C\bullet,\overline{\bullet})$ is positive on ${}_{W^{St}}E_2^{0,m}$ if $m\equiv 3\ (\textup{mod }4)$, and there are $g(C_a)+a$ dimensional negative classes in the $(\frac{m-1}{2},\frac{m+1}{2})$ part and  $(\frac{m+1}{2},\frac{m-1}{2})$ part of ${}_{W^{St}}E_2^{0,m}$ if $m\equiv 1\ (\textup{mod }4)$. By summing $g(C_a)+a$ and $7$ and applying Theorem \ref{Corg}, we get the index formula in Theorem \ref{Sa} (1) and (2).

 If $m$ is even, then $${}_{W^{St}}E_2^{-1,m+1}=H^{m-1}(S\times_{\mb{P}^1}T)=H^{m-1}_{\textup{prim}}(S\times_{\mb{P}^1}T).$$ So $S(C\bullet,(-N)\overline{\bullet})$ is positive on ${}_{W^{St}}E_2^{-1,m+1}$. On the other hand, write $$H^{m}(X_1)\oplus H^{m}(X_2)= H^{m}(\mb{P}^2\times T)\oplus H^{m-2}(\Gamma_a)\oplus \left(\bigoplus_{i=1}^{a}H^{m-2}(F_i)\right)\oplus H^{m}(X_2)$$ and define the vanishing cohomology of $H^{m-2}(T)$ as the kernel of the Gysin morphism: $$H^{m-2}_{\textup{van}}(T):=\textup{ker}\left(H^{m-2}(T)\rightarrow H^{m}(\mb{P}^1\times \mb{P}^{m-2})\right)$$ Then it is elementary to compute the orthogonal decomposition $${}_{W^{St}}E_2^{0,m}\cong U_1\oplus U_2\oplus U_3\oplus U_4\oplus U_5,$$ where $U_1$ is the orthogonal complement of $H^{0}(C_a)\otimes H^{m-2}(T)$ in $H^{m-2}(\Gamma_a)$; $$U_2= \bigoplus_{i=1}^{a}H^{m-2}_{\textup{prim}}(F_i);$$ $$U_3\subset H^2(\mb{P}^2)\otimes H^{m-2}(T)\oplus H^{0}(C_a)\otimes H^{m-2}(T)\subset H^m(X_1)$$ is the subspace $$\left\{(6h\cup\alpha,\alpha)\mid\alpha\in H^{m-2}_{\textup{van}}(T)\right\};$$ $$U_4\subset \bigoplus_{i=1}^{a}H^{m-2}(F_i)$$ is the subspace $$\{\phi_j^{*}\eta^{\frac{m-2}{2}}-\phi_k^{*}\eta^{\frac{m-2}{2}},\ \phi_j^{*}h\eta^{\frac{m-4}{2}}-\phi_k^{*}h\eta^{\frac{m-4}{2}}\mid j\neq k \textup{ with } \phi_j:F_j\hookrightarrow S\times_{\mb{P}^1} T \textup{ the embedding}\};$$ $$U_5\subset H^{m}(\mb{P}^2\times T)\oplus \left(H^{0}(C_a)\otimes H^{m-2}(T)\right)\oplus H^{m}(X_2)$$ is the six dimensional subspace $$\textup{Span}_{\C}\{(\eta^{\frac{m}{2}},0,\eta^{\frac{m}{2}}),\ (f\cup \eta^{\frac{m-2}{2}},0,f\cup \eta^{\frac{m-2}{2}} ),\ ((3h-f)\cup h\cup \eta^{\frac{m-4}{2}},0,0),$$$$\ (0,0,(3h-f)\cup h\cup \eta^{\frac{m-4}{2}}),\ (h^2\cup f\cup \eta^{\frac{m-6}{2}}),\ ((3h-(m-2)f)
 \cup\eta^{\frac{m-2}{2}},\eta^{\frac{m-2}{2}},-3h\cup \eta^{\frac{m-2}{2}})\}$$ if $m\geq 6$ and is the four-dimensional subspace $$\textup{Span}_{\C}\{(\eta^{\frac{m}{2}},0,\eta^{\frac{m}{2}}),\ ((3h-f)\cup h\cup \eta^{\frac{m-4}{2}},0,0),\ (0,0,(3h-f)\cup h\cup \eta^{\frac{m-4}{2}}),$$$$\ ((3h-(m-2)f)
 \cup\eta^{\frac{m-2}{2}},\eta^{\frac{m-2}{2}},-3h\cup \eta^{\frac{m-2}{2}})\}$$ if $m=4$. Furthermore, $S(C\bullet,\overline{\bullet})$ is positive on $U_1\oplus U_2 \oplus U_3$, has index $(m-1,m-1)$ on $U_4$, has index $(3,3)$ on $U_5$ if $m\geq 6$, and has index $(2,2)$ on $U_5$ if $m=4$. By summing all the indexes and applying Theorem \ref{Corg}, we get the index formula in Theorem \ref{Sa} (3) and (4).

\end{proof}

\section{Cohomology Ring of Fiber Product of Lefschetz Fibrations}

For $i=1,2$, let $X_i$ be a projective manifold of dimension $m_i$ and $\{Y_{i,t}\}_{t\in \mathbb{P}^1}$ be a Lefschetz pencil on $X_i$, where $Y_{i,t}$ are sections of some ample linear system $|\mathcal{L}_i|$. Let $B_i$ be the base locus of the pencil and denote $\iota_i:B_i\hookrightarrow X_i$ the inclusion map. Let $$\tilde{X}_i:=\textup{Bl}_{B_i}X_i\xrightarrow{f_i} \mathbb{P}^1$$ be the associated Lefschetz fibration and $\Delta(f_i)\subset \mathbb{P}^1$ be the critical locus of $f_i$. We also let $E_i$ be the exceptional divisor of the birational morphism $\tilde{X_i}\xrightarrow{F_i}X_i$ and write $\tilde{\iota}_i:E_i\hookrightarrow X_i$ the inclusion map. Note that $$E_i\cong \mathbb{P}(N_{Z_i/X_i})=\mathbb{P}(\iota_i^*\mathcal{L}_i\oplus \iota_i^*\mathcal{L}_i ).$$ Let $Y_i:=Y_{i,t}$ for some general $t$ in $\mb{P}^1$. Recall that by the Lefschetz hyperplane theorem, the restriction map $H^k(X_i)\rightarrow H^{k}(Y_i)$ is an isomorphism for $k\leq m_i-2$, is injective for $k=m_i-1$, and is surjective for $k>m_i-1$. Consider the fixed cohomology of $H^{m_i-1}(Y_i)$ defined by $$H^{m_i-1}_{\textup{fixed}}(Y_i)=\textup{im}\left(H^{m_i-1}(X_i)\xrightarrow{\iota'_{i}{}^{*}} H^{m_i-1}(Y_i)\right)$$ and the vanishing cohomology of $Y_i$ defined by $$H^{m_i-1}_{\textup{van}}(Y_i)=\textup{ker}\left(H^{m_i-1}(Y_i)\xrightarrow{\iota'_{i!}}H^{m_i+1}(X_i)\right),$$ where $\iota'_i: Y_i\hookrightarrow X_i$ is the inclusion and $\iota'_{i!}$ is the associated Gysin morphism. Then we have an orthogonal decomposition $$H^{m_i-1}(Y_i)=H^{m_i-1}_{\textup{fixed}}(Y_i)\oplus H^{m_i-1}_{\textup{van}}(Y_i).$$ Similarly, since $B_i$ is a complete intersection of two sections of an ample line bundle, by the Lefschetz hyperplane theorem, the restriction map $H^k(X_i)\rightarrow H^{k}(B_i)$ is an isomorphism for $k\leq m_i-3$, is injective for $k=m_i-2$ and is surjective for $k>m_i-2$. We can similarly define the fixed cohomology of $H^{m_i-2}(B_i)$ by $$H^{m_i-2}_{\textup{fixed}}(B_i)=\textup{im}\left(H^{m_i-2}(X_i)\xrightarrow{\iota_{i}^*} H^{m_i-2}(B_i)\right)$$ and the vanishing cohomology of $B_i$ by $$H^{m_i-2}_{\textup{van}}(B_i)=\textup{ker}\left(H^{m_i-2}(B_i)\xrightarrow{\iota_{i!}}H^{m_i+2}(X_i)\right),$$ where $\iota_{i!}$ is the associated Gysin morphism. Then we sill have the orthogonal decomposition $$H^{m_i-2}(B_i)=H^{m_i-2}_{\textup{fixed}}(B_i)\oplus H^{m_i-2}_{\textup{van}}(B_i).$$

We are only interested in the case that $\Delta(f_1)$  and  $\Delta(f_2)$ are disjoint. In such case, it is easy to see that the fiber product $\tilde{X}_1\times_{\mathbb{P}^1}\tilde{X}_2$ is a smooth projective manifold. The goal of the section is to prove the following Lefschetz-type theorem:

\begin{theorem} \label{fib}
    Assume that $\Delta(f_1)$ and $\Delta(f_2)$ are disjoint. Then the canonical morphism of graded rings $$H^{\bullet}(\tilde{X}_1,\Q)\otimes_{H^{\bullet}(\mb{P}^1,\Q)}H^{\bullet}(\tilde{X}_1,\Q)\rightarrow H^{\bullet}(\tilde{X}_1\times_{\mathbb{P}^1}\tilde{X}_2,\Q)$$ is an isomorphism in degree $\leq m_1+m_2-2$ and is injective in degree $ m_1+m_2-1$.
\end{theorem}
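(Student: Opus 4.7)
The plan is to analyze both sides via the Leray spectral sequence over $\mP^1$ for the maps $f_1$, $f_2$, and $g \colon W := \tilde{X}_1 \times_{\mP^1} \tilde{X}_2 \to \mP^1$. The disjointness of the critical loci first guarantees that $W$ is smooth; this is essential, since without it one of the fiber-product terms becomes badly singular.

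First I would apply Deligne's theorem (or the BBD decomposition theorem, which applies since $\tilde{X}_i$ is smooth projective and $f_i$ is proper) to obtain a splitting
$$Rf_{i*}\Q \simeq \bigoplus_k (R^k f_{i*}\Q)[-k]$$
in $D^b_c(\mP^1, \Q)$. By the invariant cycle theorem for Lefschetz pencils, $R^k f_{i*}\Q$ is the constant sheaf $\underline{H^k(Y_i)}$ for $k \neq m_i - 1$, while $R^{m_i-1} f_{i*}\Q = (j_i)_* \mathcal{L}_i$, where $\mathcal{L}_i$ is the vanishing-cohomology local system on $U_i = \mP^1 \setminus \Delta(f_i)$. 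Because we work with $\Q$-coefficients, all sheaves are flat, so proper base change and the projection formula give
$$Rg_*\Q \simeq Rf_{1*}\Q \otimes_{\mP^1} Rf_{2*}\Q \simeq \bigoplus_{k,l}(R^k f_{1*}\Q \otimes R^l f_{2*}\Q)[-k-l],$$
and the Leray spectral sequence for $g$ also degenerates at $E_2$.

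Next I would compare the two $E_2$-pages term by term. The canonical map in the theorem is a morphism of $H^{\bullet}(\mP^1, \Q)$-modules compatible with Leray filtrations, whose associated graded is induced by the cup-product pairing
$$H^p(\mP^1, R^k f_{1*}\Q) \otimes H^{p'}(\mP^1, R^l f_{2*}\Q) \to H^{p+p'}(\mP^1, R^k f_{1*}\Q \otimes R^l f_{2*}\Q).$$
Because $H^1(\mP^1, \underline{V}) = 0$ for a constant sheaf $\underline{V}$, any nontrivial $H^1$-contribution must involve an $R^{m_i-1}f_{i*}\Q$ factor. In total degree $\leq m_1 + m_2 - 2$ only one summand has both $k = m_1 - 1$ and $l = m_2 - 1$, namely the one at $(p, p', k, l) = (0, 0, m_1 - 1, m_2 - 1)$ in degree exactly $m_1 + m_2 - 2$; it contributes $H^0(\mP^1, (j_{1*}\mathcal{L}_1) \otimes (j_{2*}\mathcal{L}_2)) = H^0(U_1 \cap U_2, \mathcal{L}_1 \otimes \mathcal{L}_2)$. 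Here disjointness enters crucially a second time: it forces $\pi_1(U_1 \cap U_2)$ to act on $\mathcal{L}_1 \otimes \mathcal{L}_2$ through the product action of $\pi_1(U_1) \times \pi_1(U_2)$, so the formal identity $(V_1 \otimes V_2)^{G_1 \times G_2} = V_1^{G_1} \otimes V_2^{G_2}$ identifies this group with $H^{m_1-1}_{\textup{fixed}}(Y_1) \otimes H^{m_2-1}_{\textup{fixed}}(Y_2)$, matching the tensor-product side exactly.

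In every other summand within the stated range at most one factor is nonconstant, so the cup product reduces to the standard pairing on $\mP^1$ tensored with a local-system calculation, and the term-by-term matching is routine. For $n = m_1 + m_2 - 1$ the map can fail surjectivity (the obstruction lies in higher cohomology of $j_*(\mathcal{L}_1 \otimes \mathcal{L}_2)$ and in pieces with both $p, p' > 0$ and both factors nonconstant), but injectivity still follows from the same graded analysis, since every surviving summand on the tensor-product side embeds into the corresponding graded piece of $H^n(W)$. I expect the main obstacle to be the book-keeping at the $E_2$-page — specifically, controlling the cup product between the extension maps of $(j_{i*}\mathcal{L}_i)$ across the critical points and verifying that the Leray graded identification is compatible with the graded ring structure on both sides.
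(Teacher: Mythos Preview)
Your Leray/decomposition-theorem approach is genuinely different from the paper's and, modulo the bookkeeping you flag, it works. The paper instead argues geometrically: it observes that the image of $W=\tilde X_1\times_{\mP^1}\tilde X_2$ under the product blow-down $\tilde X_1\times\tilde X_2\to X_1\times X_2$ is a \emph{hypersurface} $W'$ cut out by $\sigma_{1,0}\sigma_{2,\infty}-\sigma_{2,0}\sigma_{1,\infty}$, singular only along $B_1\times B_2$, with $W\to W'$ a small resolution. Comparing the two Mayer--Vietoris (discriminant-square) sequences for $W\to W'$ and $\tilde X_1\times\tilde X_2\to X_1\times X_2$, together with the Lefschetz hyperplane theorem for $W'\subset X_1\times X_2$, reduces the problem to an explicit dimension bound on $\ker\big(H^\bullet(\tilde X_1)\otimes_\Q H^\bullet(\tilde X_2)\to H^\bullet(\tilde X_1)\otimes_{H^\bullet(\mP^1)}H^\bullet(\tilde X_2)\big)$, checked via the blow-up formulas (Lemmas \ref{can}, \ref{cup}). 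The Leray spectral sequence appears only at the very end, to match dimensions in the single degree $m_1+m_2-2$ (Lemma \ref{dim}). Your route is more uniform and would generalize more readily to other base curves; the paper's route is more hands-on and produces, as a byproduct, the concrete singular model $W'$.

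Two small points to tighten. First, your description of the sheaves is accurate only in the generic case where the vanishing cycles are nonzero; when $H^{m_i-1}_{\mathrm{van}}(Y_i)=0$ the sheaf $R^{m_i}f_{i*}\Q$ acquires skyscraper summands (the paper treats this as Case II in Lemma \ref{dim}), so your case analysis must branch there. Also, $\mathcal L_i$ should be the full local system $R^{m_i-1}(f_i|_{U_i})_*\Q$, not the vanishing part. Second, for injectivity in degree $m_1+m_2-1$ the delicate piece is $p=1$ with $(k,l)=(m_1-1,m_2-1)$: you need
\[
H^1\!\big((j_1)_*\mathcal L_1\big)\otimes H^0\!\big((j_2)_*\mathcal L_2\big)\ \oplus\ H^0\!\big((j_1)_*\mathcal L_1\big)\otimes H^1\!\big((j_2)_*\mathcal L_2\big)\ \hookrightarrow\ H^1\!\big((j_1)_*\mathcal L_1\otimes(j_2)_*\mathcal L_2\big),
\]
and this does not follow from the $H^0$-argument alone. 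It does follow once you use the orthogonal splitting $\mathcal L_i=\mathcal L_i^{\mathrm{fix}}\oplus\mathcal L_i^{\mathrm{van}}$ for Lefschetz pencils (so $H^0((j_i)_*\mathcal L_i)=H^{m_i-1}_{\mathrm{fix}}(Y_i)$ and $H^1$ sees only the vanishing piece), together with your disjointness observation $(j_1)_*\mathcal L_1\otimes(j_2)_*\mathcal L_2\cong j_*(\mathcal L_1\otimes\mathcal L_2)$; then the source lands isomorphically in two direct summands of the target. Finally, note that Deligne's 1968 theorem does not apply to $f_i$ directly (the map is not smooth); you correctly invoke BBD, but the decomposition theorem yields a \emph{perverse} splitting, and the identification with $\bigoplus_k R^kf_{i*}\Q[-k]$ over a curve still requires a short argument.
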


\begin{proof}\footnote{The proof of the theorem is inspired by Kloosterman's answer on Mathoverflow: \url{https://mathoverflow.net/questions/68625/hodge-diamond-of-a-calabi-yau-fourfold/70052##70052}}
 We have a natural map  $\tilde{X}_1\times \tilde{X}_2\xrightarrow{F} X_1\times X_2$ induced by the birational morphism $\tilde{X_i}\xrightarrow{F_i}X_i$. Consider the image of $\tilde{X}_1\times_{\mathbb{P}^1}\tilde{X}_2$ under $F$, denoted by $W$. Note that if the Lefschetz pencil $\{Y_{i,t}\}_{t\in \mathbb{P}^1}$ is defined by the secitons $\sigma_{i,t}=\sigma_{i,0}+t\sigma_{i,\infty}\  ( t\in\mb{P}^1)$ of $|\mathcal{L}_i|$ on $X_i$, then we have $$W=V(\sigma_{1,0}\sigma_{2,\infty}-\sigma_{2,0}\sigma_{1,\infty})\subset X_1\times X_2.$$ Also, $W$ is only singular at $$B_1\times B_2=V(\sigma_{1,0},\sigma_{1.\infty},\sigma_{2,0},\sigma_{2,\infty})\subset X_1\times X_2,$$ and locally at any point $p$ on  $B_1\times B_2$, the germ $(W,p)$ is isomorphic to the product of the germ of a threefold ordinary double point and $(B_1\times B_2,p)$. The fiber product $\tilde{X}_1\times_{\mathbb{P}^1}\tilde{X}_2$ can be regarded as the small resolution of $W$ with exceptional set $E$, where $E$ is a $\mathbb{P}^1$ bundle on $Z:=B_1\times B_2$ isomorphic to $\mathbb{P}(\pi_1'{}^{*}\iota_1^*\mathcal{L}_1\oplus \pi_2'{}^{*}\iota_2^*\mathcal{L}_2)$. Here, $\pi_i':B_1\times B_2\rightarrow B_i$ is the natural projection map. Thus, we have the discriminant square 
\begin{center}
    \begin{tikzcd}
E  \arrow[r,"\tilde{\iota}"] \arrow[d, "F|_{E}"] & \tilde{X}_1\times_{\mathbb{P}^1}\tilde{X}_2 \arrow[d, "F|_{W}"] \\
Z \arrow[r,"\iota"]                               & W                                                             
\end{tikzcd}
\end{center} and the discriminant square has a natural inclusion into the discriminant square formed by $F$: 

\begin{center}
    \begin{tikzcd}
E':= E_1\times \tilde{X}_2 \cup \tilde{X}_1\times E_2  \arrow[r,"\tilde{\iota}'"] \arrow[d,"F|_{E'}"] & \tilde{X}_1\times \tilde{X}_2 \arrow[d, "F"] \\
Z':=B_1\times X_2\cup X_1\times B_2 \arrow[r,"\iota'"]                               & X_1\times X_2.                                                              
\end{tikzcd}
\end{center}
Note that the two components of $E'$ intersect at $E_1\times E_2$ and $E$ can be regarded as the diagonal subset of this intersection. Then, by the Mayer-Vietoris sequence for the discriminant square (cf. \cite[Corollary-Definition 5.37]{PS}), we have a commutative diagram of long exact sequences of mixed Hodge structures: 

\begin{center}
    \begin{tikzcd}
... \arrow[r] & H^k(X_1\times X_2) \arrow[r] \arrow[d] & H^k(\tilde{X}_1\times \tilde{X}_2)\oplus H^k(Z')  \arrow[d] \arrow[r]                    & H^k(E') \arrow[r] \arrow[d] & ... \\
... \arrow[r] & H^k(W) \arrow[r]                       & H^k(\tilde{X}_1\times_{\mathbb{P}^1}\tilde{X}_2)\oplus H^k(Z)  \arrow[r] & H^k(E) \arrow[r]            & ...
\end{tikzcd}
\end{center}

Since $E'$ is compact and $X_1\times X_2$ is smooth, the connecting homomorphism $H^{k-1}(E')\rightarrow H^k(X_1\times X_2)$ is zero for all $k$, and hence the exact sequence on the top forms a short exact sequence for all $k$. Also, since $W$ is a section of the ample line bundle $\pi_1^{*}\mathcal{L}_1\otimes \pi_2^*\mathcal{L}_2$ on $X_1\times X_2$, where $\pi_i:X_1\times X_2\rightarrow X_i$ is the natural projection, by the Lefschetz hyperplane theorem (cf. \cite[Theorem C.15]{PS}), the restriction map $$H^k(X_1\times X_2)\rightarrow H^k(W)$$ is an isomorphism if $k\leq m_1+m_2-2$ and is injective for $k=m_1+m_2-1$. In particular, the mixed Hodge structure on $H^k(W)$ is pure for $k\leq m_1+m_2-2$, and hence the exact sequence on the bottom is injective on the left for $k\leq m_1+m_2-2$ and is surjective on the right for $l\leq m_1+m_2-3$.

We can compute the cohomology $H^k(E')$ and $H^k(Z')$ by using the Mayer-Vietoris sequence associated to the two components of $E'$ and $Z'$. By comparing the two Mayer-Vietoris sequences, it is not hard to see that $$H^k(E')=H^k(Z')\oplus H^{k-2}(B_1\times X_2)\oplus H^{k-2}(X_1\times B_2)\oplus H^{k-4}(B_1\times B_2).$$ If we write $H^k(E)=H^k(Z)\oplus H^{k-2}(Z)$, then the restriction map $H^k(E')\rightarrow H^k(E)$ can be identified as follows: we have the restriction map $H^k(Z')\rightarrow H^k(Z)$ and $H^k(Z')$ is mapped to zero in the summand $H^{k-2}(Z)$; $$H^{k-2}(B_1\times X_2)\oplus H^{k-2}(X_1\times B_2)$$ is mapped to zero in the summand $H^{k}(Z)$ and is mapped to $H^{k-2}(B_1\times B_2)=H^{k-2}(Z)$ by the restriction map; $ H^{k-4}(B_1\times B_2)$ is mapped to $H^k(Z)\oplus H^{k-2}(Z)$ via the map $$\alpha\mapsto \left(-\alpha\cup c_1(\pi_1^{*}\iota_1^*\mathcal{L}_1)\cup c_1(\pi_2^{*}\iota_2^*\mathcal{L}_2),-\alpha \cup \left(c_1(\pi_1^{*}\iota_1^*\mathcal{L}_1)+ c_1(\pi_2^{*}\iota_2^*\mathcal{L}_2)\right) \right)$$ by using Grothendieck's definition of Chern classes. We can then easily see that the image of $H^{k-4}(B_1\times B_2)$ is contained in the image of the first three direct summands of $H^k(E')$ for all $k$. Hence, $\textup{ker}(H^k(E')\rightarrow H^k(E))$ has dimension $$ \textup{dim}_{\C}\textup{ker}(H^k(Z')\rightarrow H^k(Z)) + \textup{dim}_{\C}H^{k-4}(B_1\times B_2)$$$$ +\textup{dim}_{\C}\textup{ker}( H^{k-2}(B_1\times X_2)\oplus H^{k-2}(X_1\times B_2)\rightarrow H^{k-2}(B_1\times B_2))$$ for all $k$. Note also that $$\textup{coker}(H^k(E')\rightarrow H^k(E))=\textup{coker}(H^k(Z')\rightarrow H^k(Z))$$ for $k\neq m_1+m_2-2$. Also, \begin{equation*}
    \begin{split}
        \textup{coker}(H^{m_1+m_2-4}(E')\rightarrow H^{m_1+m_2-4}(E))=&\textup{coker}(H^{m_1+m_2-4}(Z')\rightarrow H^{m_1+m_2-4}(Z))\\ &\oplus H^{m_1-2}_{\textup{van}}(B_1)\otimes H^{m_2-2}_{\textup{van}}(B_2).
    \end{split}
\end{equation*}

 Now coming back to cohomology of the fibered product $H^k(\tilde{X}_1\times_{\mathbb{P}^1}\tilde{X}_2)$, we would like to prove the following dimension bounds:
 
 \textbf{Claim}: The degree $k$ part of the kernel of the quotient map $$H^{\bullet}(\tilde{X}_1\times \tilde{X}_2)=H^{\bullet}(\tilde{X}_1)\otimes_{\C} H^{\bullet}(\tilde{X}_2)\rightarrow H^{\bullet}(\tilde{X}_1)\otimes_{H^{\bullet}(\mb{P}^1)}H^{\bullet}(\tilde{X}_2)$$ has dimension greater than or equal to $$\textup{dim}_{\C}H^{k-4}(B_1\times B_2)$$$$ +\textup{dim}_{\C}\textup{ker}( H^{k-2}(B_1\times X_2)\oplus H^{k-2}(X_1\times B_2)\rightarrow H^{k-2}(B_1\times B_2))$$ for $k\leq m_1+m_2-1$.

 Assuming the claim, then by the snake lemma, the canonical map $$H^{\bullet}(\tilde{X}_1)\otimes_{H^{\bullet}(\mb{P}^1)}H^{\bullet}(\tilde{X}_1)\rightarrow H^{\bullet}(\tilde{X}_1\times_{\mathbb{P}^1}\tilde{X}_2)$$ is injective for $k\leq m_1+m_2-1$ and is surjective for $k\leq m_1+m_2-3$. 
 
 To prove our claim, we first note the following lemma:
 \begin{lemma} \label{can} Let  $f_i:\tilde{X}_i\rightarrow \mb{P}^1$, $F_i:\tilde{X}_i\rightarrow X_i$, $\mathcal{L}_i$ and $E_i$ be as above. Then
      $$f_i^*\mathcal{O}_{\mb{P}^1}(1)\cong F_i^{*}\mathcal{L}_i\otimes \mathcal{O}(-E_i).$$
 \end{lemma}
 \begin{proof}
 Indeed, from the birational morphism $\tilde{X}_i\xrightarrow{F}X_i$, we have the canonical bundle formula $K_{\tilde{X}_i}\cong F_i^{*}K_{X_i}\otimes \mathcal{O}(E_i)$, and if we regard $\tilde{X}_i$ as the hypersurface of $\mathbb{P}^1\times X_i$ defined by a section of the line bundle $f_i^*\mathcal{O}_{\mb{P}^1}(1)\otimes F_i^{*}\mathcal{L}_i$ (where by abuse of notation we denote projection map on $\mathbb{P}^1\times X_i$ by $f_i$ and $F_i$ as well), then by adjunction we have $$K_{\tilde{X}_i}\cong f_i^*\mathcal{O}_{\mb{P}^1}(-2)\otimes F_i^{*}K_{X_i} \otimes f_i^*\mathcal{O}_{\mb{P}^1}(1)\otimes F_i^{*}\mathcal{L}_i.$$ Thus by the comparisom of the two expressions of $K_{\tilde{X}_i}$, the expression of $f_i^{*}\mathcal{O}_{\mb{P}^1}(1)$ follows. 
 \end{proof}
 
 Then we have the following lemma:
 
 \begin{lemma}\label{cup}
     Let  $f_i:\tilde{X}_i\rightarrow \mb{P}^1$, $F_i:\tilde{X}_i\rightarrow X_i$, $\mathcal{L}_i$, $E_i$ and $B_i$ be as above. If we write $H^k(\tilde{X}_i)=H^k(X_i)\oplus H^{k-2}(B_i)$ for all $k$, then the morphism  $$H^k(\tilde{X}_i)\xrightarrow{\cup c_1\left(f_i^*\mathcal{O}_{\mb{P}^1}(1)\right)} H^{k+2}(\tilde{X}_i)$$ is given by $$(\alpha,\beta)\mapsto (\alpha\cup c_1(\mathcal{L}_i)+\iota_{i!}\beta,\ \iota_i^*\alpha-\beta \cup c_1(\iota_i^*\mathcal{L}_i)).$$
 \end{lemma}
 \begin{proof} Note that by Lemma \ref{Hodgeblowup}, the Hodge structure on $H^k(\tilde{X}_i)$ can be computed by the pushout diagram 
\begin{center}
    \begin{tikzcd}
H^{k-4}(B_i) \arrow[r,"\iota_{i!}"] \arrow[d,"\Psi_i"] & H^k(X_i) \arrow[d,"F_i^*"] \\
H^{k-2}(E_i) \arrow[r,"\tilde{\iota}_{i!}"] & H^k(\tilde{X}_i), 
\end{tikzcd}
\end{center} where the horizontal maps are Gysin morphisms, and $\Psi_i$ is the cup product with the Euler class of the universal quotient bundle of $F_i|_{E_i}^*N_{B_i/X_i}$ on $E_i\cong\mb{P}(N_{B_i/X_i})$. In our case, if we write $H^{k-2}(E_i)=H^{k-2}(B_i)\oplus \left(H^{k-4}(B_i)\cup h\right)$, where $h=c_1(\mathcal{O}_{E_i}(1))$, then we have $$\Psi(\alpha)=(\alpha\cup (2c_1(\iota_i^*\mathcal{L}_i)) ,\alpha\cup h ).$$ By this and the projection formula, we can compute that for a $k$-form $F_i^*\alpha+\tilde{\iota}_{i!}F_i|_{E_i}^*\beta\in H^k(\tilde{X}_i)$ with $\alpha \in H^k(X_i)$ and $\beta \in H^{k-2}(B_i)$, we have 
\begin{equation*}
 \begin{split}
 (F_i^*\alpha+\tilde{\iota}_{i!}F_i|_{E_i}^*\beta)\cup [-E_i]&=\tilde{\iota}_{i!}\left(-\tilde{\iota}_i^*F_i^*\alpha+F_i|_{E_i}^*\beta\cup h\right)\\
&=\tilde{\iota}_{i!}\left(F_i|_{E_i}^*\iota_i^*\alpha+ \Psi_i(\beta)-2F_i|_{E_i}^*\left(\beta \cup c_1(\iota_i^*\mathcal{L}_i)\right) \right)\\
&=F_i^*\iota_{i!}\beta+ \tilde{\iota}_{i!}F_i|_{E_i}^*\left(\iota_i^*\alpha-2\beta \cup c_1(\iota_i^*\mathcal{L}_i)\right).
 \end{split}   
\end{equation*}
In other words, if we write $H^k(\tilde{X}_i)=H^k(X_i)\oplus H^{k-2}(B_i)$ for all $k$, then the morphism $$H^k(\tilde{X}_i)\xrightarrow{\cup [-E_i]} H^{k+2}(\tilde{X}_i)$$ is given by $$(\alpha,\beta)\mapsto (\iota_{i!}\beta,\iota_i^*\alpha-2\beta \cup c_1(\iota_i^*\mathcal{L}_i)).$$ As a result, by using the isomorphism $$f_i^*\mathcal{O}_{\mb{P}^1}(1)\cong F_i^{*}\mathcal{L}_i\otimes \mathcal{O}(-E_i)$$ in Lemma \ref{can}, we can see that the morphism  $$H^k(\tilde{X}_i)\xrightarrow{\cup c_1\left(f_i^*\mathcal{O}_{\mb{P}^1}(1)\right)} H^{k+2}(\tilde{X}_i)$$ is given by $$(\alpha,\beta)\mapsto (\alpha\cup c_1(\mathcal{L}_i)+\iota_{i!}\beta,\ \iota_i^*\alpha-\beta \cup c_1(\iota_i^*\mathcal{L}_i)).$$ 
\end{proof}

If we write $$H^k(\tilde{X}_1\times \tilde{X}_2)=H^k(X_1\times X_2)\oplus H^{k-2}(B_1\times X_2)\oplus H^{k-2}(X_1\times B_2)\oplus H^{k-4}(B_1\times B_2),$$ then by applying Lemma \ref{cup}, we can see that the degree $k$ part of the kernel of the quotient map $$H^{\bullet}(\tilde{X}_1\times \tilde{X}_2)\rightarrow H^{\bullet}(\tilde{X}_1)\otimes_{H^{\bullet}(\mb{P}^1)}H^{\bullet}(\tilde{X}_2)$$ is generated by $(M_1,M_2,M_3,M_4)$, where $$M_1=(\alpha\cup c_1(\mathcal{L}_1)+\iota_{1!}\beta)\cup \gamma -\alpha\cup (\gamma\cup c_1(\mathcal{L}_2)+\iota_{2!}\delta),$$ $$M_2=(\iota_1^*\alpha-\beta \cup c_1(\iota_1^*\mathcal{L}_1))\cup \gamma - \beta \cup (\gamma\cup c_1(\mathcal{L}_2)+\iota_{2!}\delta),$$ $$M_3= (\alpha\cup c_1(\mathcal{L}_1)+\iota_{1!}\beta)\cup \delta-\alpha \cup (\iota_2^*\gamma-\delta \cup c_1(\iota_2^*\mathcal{L}_2)),$$ $$M_4=(\iota_1^*\alpha-\beta \cup c_1(\iota_1^*\mathcal{L}_1))\cup \delta -\beta \cup (\iota_2^*\gamma-\delta \cup c_1(\iota_2^*\mathcal{L}_2)),$$ and $\alpha\in H^l(X_1),$ $\beta\in H^{l-2}(B_1)$, $\gamma\in H^{k-l-2}(X_2)$. $\delta\in H^{k-l-4}(B_2)$ with $0\leq l\leq k$. If we put $\beta=\delta=0$, then $$(M_1,M_2,M_3,M_4)=(\alpha\cup c_1(\mathcal{L}_i)\cup\gamma-\alpha\cup\gamma\cup c_1(\mathcal{L}_2),\iota_1^*\alpha\cup \gamma, -\alpha\cup\iota_2^*\gamma,0).$$ Take the projection of it to $(M_2, M_3)$ factor, and we can see that the subspace generated by these elements has dimension greater than or equal to $$ \textup{dim}_{\C}\textup{ker}( H^{k-2}(B_1\times X_2)\oplus H^{k-2}(X_1\times B_2)\rightarrow H^{k-2}(B_1\times B_2)).$$ Similarly, if we put either $\beta=\gamma=0$ or $\alpha=\delta=0$ and project them to the $M_4$ factor, then these elements generate the subspace $$\textup{im}\left(H^{k-4}(X_1\times B_2)\oplus H^{k-4}(B_1\times X_2)\rightarrow H^{k-4}(B_1\times B_2)\right)$$ in the $H^{k-4}(B_1\times B_2)$ factor. So the dimension of the subspace generated by $(M_1,M_2,M_3,M_4)$ with either $\beta=\gamma=0$ or $\alpha=\delta=0$ is greater than or equal to $$\textup{dim}_{\C}H^{k-4}(B_1\times B_2)$$ for $k\neq m_1+m_2$. This proves the claim. 

The surjectivity in $k=m_1+m_2-2$ doesn't follow from this picture. Instead, we will use the Leray spectral sequence to compute the dimension of $H^{m_1+m_2-2}(X_1\times_{\mb{P}^1}X_2)$ and show that it is equal to the dimension of degree $m_1+m_2-2$ part of $$H^{\bullet}(\tilde{X}_1)\otimes_{H^{\bullet}(\mb{P}^1)}H^{\bullet}(\tilde{X}_1).$$

We will prove the following lemma, where $h^k$, $h^k_{\textup{van}}$ means the $\C$-dimension of the cohomology $H^k$, $H^k_{\textup{van}}$ respectively:

\begin{lemma}\label{dim}
    Let $d_i$ be the cardinality of $\Delta(f_i)$. Then \begin{enumerate}
        \item $h^{m_i}(\tilde{X}_i)=d_i+h^{m_i}(Y_i)+h^{m_i-2}(Y_i)-2h^{m_i-1}_{\textup{van}}(Y_i),$
        \item The following formula holds: \begin{equation*}
            \begin{split}
                h^{m_1+m_2-2}(\tilde{X}_1\times_{\mb{P}^1}\tilde{X}_2)=& h^{m_1+m_2-2}(Y_1\times Y_2)-h_{\textup{van}}^{m_1-1}(Y_1)h^{m_2-1}(Y_2)\\&-h^{m_1-1}(Y_1)h^{m_2-1}_{\textup{van}}(Y_2)+h^{m_1-1}_{\textup{van}}(Y_1)h^{m_2-1}_{\textup{van}}(Y_2)\\
                &+h^{m_1+m_2-4}(Y_1\times Y_2)-h_{\textup{van}}^{m_1-1}(Y_1)h^{m_2-3}(Y_2)\\ &-h^{m_1-3}(Y_1)h^{m_2-1}_{\textup{van}}(Y_2)+d_1 h^{m_2-2}(Y_2)+d_2 h^{m_1-1}(Y_1)\\&-2\left(h^{m_1-1}_{\textup{van}}(Y_1)h^{m_2-2}(Y_2)+h^{m_1-2}(Y_1)h^{m_2-1}_{\textup{van}}(Y_2)\right)
            \end{split}
        \end{equation*}
    \end{enumerate}
\end{lemma}

Assuming the lemma, note also that the degree $m_1+m_2-2$ part of  $$H^{\bullet}(\tilde{X}_1)\otimes_{H^{\bullet}(\mb{P}^1)}H^{\bullet}(\tilde{X}_1)$$ has dimension \begin{equation}\label{dim'}
    \begin{split}
        h^{m_1+m_2-2}(\tilde{X}_1\times \tilde{X}_2)-h^{m_1+m_2-6}(B_1\times B_2)
        -h^{m_1+m_2-4}(B_1\times X_2)\\
        -h^{m_1+m_2-4}(X_1\times B_2)+h^{m_1+m_2-4}(B_1\times B_2)-h^{m_1-2}_{\textup{van}}(B_1)h^{m_2-2}_{\textup{van}}(B_2).
    \end{split}
\end{equation}
Thus, by canceling $d_i$ of Lemma \ref{dim} (2) using Lemma \ref{dim} (1), it is pretty elementary to verify that $h^{m_1+m_2-2}(\tilde{X}_1\times_{\mb{P}^1}\tilde{X}_2)$ is equal to the formula (\ref{dim'}) by the Lefschetz hyperplane theorem, the K\"unneth formula and the Poincar\'e duality. 
\end{proof}

\begin{proof}[Proof of Lemma \ref{dim}]
    We will apply the method discussed in \cite[4.5.4]{PS}. We first consider the Lefschetz fibration $\tilde{X}_i\xrightarrow{f_i} \mb{P}^1$ and prove the formula (1). In this case, the Leray spectral sequence for $f_i$ degenerates at $E_2$ (cf. \cite[Theorem 4.24]{PS}). Moreover, the Lefschetz fibration satisfies the local invariant cycle property (\cite[Corollary C.21]{PS}): the adjunction homomorphism $$a_k: R^{k}f_{i*}\underline{\Z}_{X_i}\rightarrow j_{i*}j_i^{*}R^{k}f_{i*}\underline{\Z}_{X_i}$$ is surjective for all $k$, where $j_i:U_i:=\mb{P}^1-\Delta(f_i)\hookrightarrow \mb{P}^1$ is the inclusion map. 
    
    We need to divide the computation into two cases. 
    
    \textbf{Case I:}
    The first case (the generic case) is that $H^{m_i-1}_{\textup{van}}(Y_i)$ is non-zero, or equivalently, all of the vanishing cycles $\delta_x$ for $x\in \Delta(f_i)$ are non-zero in $H^{m_i-1}(Y_i)$ since the vanishing cycles are conjugate under monodromy. In this case, $R^{k}f_{i*}\underline{\Z}_{X_i}$ is locally free for $k\neq m_i-1$ and the adjunction homomorphism $a_{m_i-1}$ is an isomorphism (\cite[Lemma C.13]{PS}). Therefore, the Leray spectral sequence for $f_i$ reads $$E^{0,m_i}_2(f_i)=H^{m_i}(Y_i,\C),$$ $$E^{1,m_i-1}_2(f_i)=H^1(\mb{P}^1,j_{i*}j_i^{*}R^{k}f_{i*}\underline{\C}_{X_i}),$$ $$E^{2,m_i-2}_2(f_i)=H^{m_i-2}(Y_i,\C).$$ Let $\mathcal{L}$ be the local system $j_i^{*}R^{m_i-1}f_{i*}\underline{\C}_{X_i}$ on $U_i$. We can calculate $E^{1,m_i-1}_2(f_i)$ by the exact sequence $$0\rightarrow j_{i!}\mathcal{L}\rightarrow j_{i*}\mathcal{L}\rightarrow \bigoplus_{x\in\Delta(f_i)}\mathcal{L}^{T}_x\rightarrow 0,$$ where $\Delta^{*}_x$ is a small punctured disk centered at $x$, $\mathcal{L}_x$ is the restriction of $\mathcal{L}$ on $\Delta^{*}_x$, and $\mathcal{L}_x^T$ is the subspace of invariants in $\mathcal{L}_x$ under the local monodromy $T$. By taking the long exact sequence of the cohomology, we have $$0\rightarrow H^{0}(U_i,\mathcal{L})\rightarrow \bigoplus_{x\in\Delta(f_i)}\mathcal{L}^{T}_x \rightarrow H^1_c(U_i,\mathcal{L})\rightarrow H^1(\mb{P}^1,j_{i*}\mathcal{L})\rightarrow 0.$$ By the theorem of the fixed part (\cite[Theorem 4.23]{PS}), $$H^0(U_i,\mathcal{L})=\textup{im}(H^{m_i-1}(\tilde{X}_i)\rightarrow H^{m_i-1}(Y_i))=\textup{im}(H^{m_i-1}(X_i)\rightarrow H^{m_i-1}(Y_i)).$$ The last equality follows from Lemma \ref{gysin} since the Gysin morphism $H^{m_i-3}(B_i)\rightarrow H^{m_i-1}(Y_i)$ can be identified via the Lefschetz hyperplane theorem with the composition $H^{m_i-3}(X_i)\rightarrow H^{m_i-1}(X_i)\rightarrow H^{m_i-1}(Y_i)$, where the first map is the cup product with the Lefschetz operator and the second map is the restriction map. Hence, we see that $$h^{0}(U,\mathcal{L})=h^{m_i-1}(Y_i)-h^{m_i-1}_{\textup{van}}(Y_i).$$ Also, since $\mathcal{L}^T_{x}$ is generated by the class in $H^{m_i-1}(Y_i)$ which is orthogonal to the vanishing cycle $\delta_x$, $\mathcal{L}^T_{x}$ has dimension $h^{m_i-1}(Y_i)-1$ for all $x\in \Delta(f_i)$. On the other hand, $h^1_c(U_i,\mathcal{L})=h^1(U_i,\mathcal{L}^{\vee})$ by the Poincar\"e duality and we have $$H^k(U_i,\mathcal{L}^{\vee})=\textup{Ext}^k_{\Z[\pi_1(U_i)]}(\Z,\mathcal{L}^{\vee})$$ for all $k$. Since $U_i$ is homotopic to $\bigvee_{i=1}^{d_i-1}S^1$, $\Z[\pi_1(U_i)]$ is the group ring of a free group of $d_i-1$ generators. There is a natural two-term free resolution of $\Z$ as a $\Z[\pi_1(U_i)]$ module (cf. \cite[Corollary 6.2.7]{weibel}): $$0\rightarrow \Z[\pi_1(U_i)]^{d_i-1}\rightarrow \Z[\pi_1(U_i)] \rightarrow \Z\rightarrow 0.$$ Therefore, by the isomorphism $H^{0}(U_i,\mathcal{L}^{\vee})\cong H^{0}(U_i,\mathcal{L})$, we have $$h^1(U_i,\mathcal{L}^{\vee})-h^0(U_i,\mathcal{L})=(d_i-2)\textup{rank}(\mathcal{L}).$$ As a consequence, \begin{equation*}
        \begin{split}
            h^{1}(\mb{P}^1,j_{i*}\mathcal{L})&=h^1(U_i,\mathcal{L}^{\vee})+h^0(U_i,\mathcal{L})-\textup{dim}\left(\bigoplus_{x\in\Delta(f_i)}\mathcal{L}_x^T\right)\\
            &= 2h^{0}(U_i,\mathcal{L})+(d_i-2)\textup{rank}(\mathcal{L})-d_i(\textup{rank}\mathcal{L}-1)\\
            &=d_i-2h^{m_i-1}_{\textup{van}}(Y_i).
        \end{split}
    \end{equation*}
     By summing the dimension of $E^{0,m_i}(f_i)$, $E^{1.m_i-1}(f_i)$ and $E^{2,m_i-2}(f_i)$, we get the formula in Lemma \ref{dim} (1).
     
 \textbf{Case II:}
   The second case we need to consider is the special case that $H^{m_i-1}_{\textup{van}}(Y_i)$ is zero, i.e., the restriction map $H^{m_i-1}(X_i)\rightarrow H^{m_i-1}(Y_i)$ is an isomorphism. In this case, $R^{k}f_{i*}\underline{\C}_{X_i}$ is locally constant for $k\neq m_i$, and for $k=m_i$, we have the short exact sequence $$0\rightarrow \bigoplus_{x\in \Delta(f_i)} H^{m_i-1}(F_x)\rightarrow R^{m_i}f_{i*}\underline{\C}_{X_i}\rightarrow j_{i*}j_{i}^{*}R^{m_i}f_{i*}\underline{\C}_{X_i}\rightarrow 0,$$ where $H^{m_i-1}(F_x)$ is the skyscraper sheaf (supported on $x$) of the cohomology of the Milnor fiber of $f_i$ around $x$. Also, $j_{i*}j_{i}^{*}R^{m_i}f_{i*}\underline{\C}_{X_i}$ is a locally constant sheaf with stalks isomorphic to $H^{m_i}(Y_i)$. Therefore, the Leray spectral sequence for $f_i$ reads  $$E^{0,m_i}_2(f_i)=H^{0}(\mb{P}^1,R^{m_i}f_{i*}\underline{\C}_{X_i}),$$ $$E^{1,m_i-1}_2(f_i)=0,$$ $$E^{2,m_i-2}_2(f_i)=H^{m_i-2}(Y_i,\C),$$ and we have $$h^{0}(\mb{P}^1,R^{m_i}f_{i*}\underline{\C}_{X_i})=d_i+h^{m_i}(Y_i,\C).$$ The formula in Lemma \ref{dim} (1) follows.

   Now we turn to the fibration $f:\tilde{X}_1\times_{\mathbb{P}^1} \tilde{X}_2\rightarrow \mb{P}^1$ and prove the formula in Lemma \ref{dim} (2). Write $\Delta(f)=\Delta(f_1)\cup\Delta(f_2)$. We want to carry out the same computation as in the Lefschetz fibration case for $f$. Since $\Delta(f_1)$ and $\Delta(f_2)$ are disjoint and the local monodromy action on $H^k(Y_1\times Y_2)=\bigoplus_{i+j=k}H^i(Y_1)\otimes H^j(Y_2)$ is induced by product group action of local monodromy 
 group action on $H^i(Y_1)$ and $H^j(Y_2)$, we still have the local invariant cycle property for $f$: the adjunction homomorphism $$a_k: R^{k}f_{*}\underline{\Z}_{\tilde{X}_1\times_{\mb{P}^1}\tilde{X}_2}\rightarrow j_{*}j^{*}R^{k}f_{*}\underline{\Z}_{\tilde{X}_1\times_{\mb{P}^1}\tilde{X}_2}$$ is surjective for all $k$, where $j:U:=\mb{P}^1-\Delta(f)\hookrightarrow \mb{P}^1$ is the inclusion map. 
   
We divide the computation into three cases: neither of $Y_i$ has vanishing cohomology, exactly one of $Y_i$ has vanishing cohomology, and both of $Y_i$ have vanishing cohomology.

\textbf{Case I$'$ (the generic case):} $H^{m_i}_{\textup{van}}(Y_i)\neq 0$ for $i=1,2$. 

For $x\in \Delta(f_1)$, let $\Delta_x$ be a small punctured disc centered at $x$ and $B_x$ be a small ball around the singularity at $f_1^{-1}(x)$. Let $B_x\cap f_1^{-1}(\Delta_x-\{x\})\xrightarrow{f_1}\Delta_x-\{x\}$ denote the Milnor fibration around the singularity at $f_1^{-1}(x)$ and $F_x$ the Milnor fiber. By the K\"unneth formula and the excision theorem, for all $t\in \Delta_x-\{x\}$, we have \begin{equation*}
       \begin{split}
           H^{k+1}(f^{-1}\Delta_x,f^{-1}(t))&=H^{k+1}(f_1^{-1}\Delta_x\times Y_2, Y_1\times Y_2)\\
           &\cong \bigoplus_{i+j=k+1}H^i(f_1^{-1}\Delta_x,Y_1)\otimes H^j(Y_2)\\
           &\cong \bigoplus_{i+j=k+1}H^i(B_x,F_x)\otimes H^j(Y_2)\\
           &\cong H^{m_1-1}(F_x)\otimes H^{k+1-m_1}(Y_2).   
       \end{split}
 \end{equation*} 
   Therefore, the long exact sequence of the cohomology for the pair $(f^{-1}\Delta_x,f^{-1}(t))$ gives $$0\rightarrow H^k(f^{-1}\Delta_x)\rightarrow H^k(Y_1\times Y_2)\rightarrow H^{m_1-1}(F_x)\otimes H^{k+1-m_1}(Y_2)\rightarrow 0.$$ The surjectivity on the right (and the injectivity on the left) follows from the assumption that the vanishing cycles for $f_1$ survive globally. A similar short exact sequence holds for $x\in \Delta(f_2)$. In particular, the adjunction homomorphism  $$a_k: R^{k}f_{*}\underline{\C}_{\tilde{X}_1\times_{\mb{P}^1}\tilde{X}_2}\rightarrow j_{*}j^{*}R^{k}f_{*}\underline{\C}_{\tilde{X}_1\times_{\mb{P}^1}\tilde{X}_2}$$ is an isomorphism for all $k$. For all $k$, let $\mathcal{L}^k$ be the local system $j^*R^{k}f_{*}\underline{\C}_{\tilde{X}_1\times_{\mb{P}^1}\tilde{X}_2}$ on $U$. We have the short exact sequence $$0\rightarrow j_{!}\mathcal{L}^k\rightarrow j_{*}\mathcal{L}^k\rightarrow \bigoplus_{x\in\Delta(f)}(\mathcal{L}^k_x)^{T}\rightarrow 0,$$ where $\Delta^{*}_x$ is a small punctured disk centered at $x$, $\mathcal{L}^k_x$ is the restriction of $\mathcal{L}^k$ on $\Delta^{*}_x$, and $(\mathcal{L}^k_x)^T$ is the subspace of invariants in $\mathcal{L}^k_x$ under the local monodromy $T$. For each $k$, $$H^0(\mb{P}^1,j_{*}\mathcal{L}^k)=H^k(Y_1\times Y_2)^{\pi_1(U)}=\bigoplus_{i+j=k}H^i(Y_1)^{\pi_1(U_1)}\otimes H^j(Y_2)^{\pi_1(U_2)},$$ and for $i=1,2$, $$H^l(Y_i)^{\pi_1(U_i)}=\left\{ \begin{array}{cc}
      H^{l}(Y_i),  & l\neq m_i-1 \\
       \textup{im}\left(H^l(X_i)\rightarrow H^l(Y_i)\right), & l=m_i-1.
   \end{array}\right.$$ Also, $H^2(\mb{P}^1,j_*\mathcal{L}^k)=H^2(\mb{P}^1,j_!\mathcal{L}^k)=H^2_c(U_i,\mathcal{L})$ is dual to $H^0(U,\mathcal{L}^{\vee})=H^0(\mb{P}^1,j_{*}\mathcal{L})$. So we have $$h^2(\mb{P}^1,j_*\mathcal{L}^k)=h^0(\mb{P}^1,j_*\mathcal{L}^k).$$
    On the other hand, we have the exact sequence $$0\rightarrow H^{0}(U,\mathcal{L}^k)\rightarrow \bigoplus_{x\in\Delta(f)}(\mathcal{L}^k_x)^{T} \rightarrow H^1_c(U,\mathcal{L}^k)\rightarrow H^1(\mb{P}^1,j_{*}\mathcal{L}^k)\rightarrow 0.$$ Note that $$\textup{dim}(\mathcal{L}^k_x)^T=\left\{\begin{array}{cc}
    h^{k}(Y_1\times Y_2)-h^{k+1-m_1}(Y_2), & x\in \Delta(f_1)\\
    h^{k}(Y_1\times Y_2)-h^{k+1-m_2}(Y_1), & x\in \Delta(f_2)
    \end{array}\right.,$$ and similar to the Lefschetz fibration case, we have $$h^{1}_c(U,\mathcal{L}^k)-h^{0}(U,\mathcal{L}^k)=(d_1+d_2-2)\textup{rank}\mathcal{L}^k.$$ As a result, \begin{equation*}
        \begin{split}
            h^1(\mathbb{P}^1,j_{*}\mathcal{L}^k)&=h^1_c(U,\mathcal{L}^k)+h^0(U,\mathcal{L}^k)-\textup{dim}\left(\bigoplus_{x\in\Delta(f)}(\mathcal{L}^k_x)^T\right)\\
            &=2h^{0}(U,\mathcal{L}^k)+(d_1+d_2-2)\textup{rank}\mathcal{L}^k\\ &\ \ \ -d_1(\textup{rank}\mathcal{L}^k-h^{k+1-m_1}(Y_2))-d_2(\textup{rank}\mathcal{L}^k-h^{k+1-m_2}(Y_1))\\
            &=2h^0(U,\mathcal{L}^k)+d_1h^{k+1-m_1}(Y_2)+d_2h^{k+1-m_2}(Y_1)-2h^k(Y_1\times Y_2)
        \end{split}
    \end{equation*}
    
    It follows that \begin{equation*}
        \begin{split}
            \textup{dim}_{\C}E^{0,m_1+m_2-2}_2(f)&=h^{0}(\mb{P}^1,j_{*}\mathcal{L}^{m_1+m_2-2})\\
            &=h^{m_1+m_2-2}(Y_1\times Y_2)-h_{\textup{van}}^{m_1-1}(Y_1)h^{m_2-1}(Y_2)\\&\ \ \ -h^{m_1-1}(Y_1)h^{m_2-1}_{\textup{van}}(Y_2)+h^{m_1-1}_{\textup{van}}(Y_1)h^{m_2-1}_{\textup{van}}(Y_2),
        \end{split}
    \end{equation*}
\begin{equation*}
        \begin{split}
            \textup{dim}_{\C}E^{1,m_1+m_2-3}_2(f)&=h^{1}(\mb{P}^1,j_{*}\mathcal{L}^{m_1+m_2-3})\\
            &=d_1h^{m_2-2}(Y_2)+d_2h^{m_1-2}(Y_1)\\&\ \ \ -2\left(h^{m_1-1}_{\textup{van}}(Y_1)h^{m_2-2}(Y_2)+h^{m_1-2}(Y_1)h^{m_2-1}_{\textup{van}}(Y_2)\right),
        \end{split}
    \end{equation*}
    
and \begin{equation*}
        \begin{split}
            \textup{dim}_{\C}E^{2,m_1+m_2-4}_2(f)&=h^{0}(\mb{P}^1,j_{*}\mathcal{L}^{m_1+m_2-4})\\
            &=h^{m_1+m_2-4}(Y_1\times Y_2)-h_{\textup{van}}^{m_1-1}(Y_1)h^{m_2-3}(Y_2)\\&\ \ \ -h^{m_1-3}(Y_1)h^{m_2-1}_{\textup{van}}(Y_2).
        \end{split}
    \end{equation*}
We get the formula in Lemma \ref{dim} (2) by summing the three dimensions. 

\textbf{Case II$'$:} $H^{m_1-1}_{\textup{van}}(Y_1)= 0$ and $H^{m_2-1}_{\textup{van}}(Y_2)\neq 0$. 

In this case, for $x\in\Delta(f_2)$, the same local computation of $R^kf_{*}\underline{\C}_{\tilde{X}_1\times_{\mb{P}^1}\tilde{X}_2}$ as in the previous case holds. For $x\in\Delta(f_1)$, since the cohomology class of the  vanishing cycle $\delta_x$ is zero, we have instead the short exact sequence $$0\rightarrow H^{m_1-1}(F_x)\otimes H^{k-m_1}(Y_2)\rightarrow H^k(f^{-1}\Delta_x)\rightarrow H^k(Y_1\times Y_2)\rightarrow 0.$$ Thus we have a short exact sequence $$0\rightarrow \bigoplus_{x\in\Delta(f_1)}H^{m_1-1}(F_x)\otimes H^{k-m_1}(Y_2)\rightarrow R^kf_{*}\underline{\C}_{\tilde{X}_1\times_{\mb{P}^1}\tilde{X}_2}\rightarrow j^*j_{*}R^kf_{*}\underline{\C}_{\tilde{X}_1\times_{\mb{P}^1}\tilde{X}_2}\rightarrow 0,$$ where $H^{m_1-1}(F_x)\otimes H^{k-m_1}(Y_2)$ is regarded as a skyscraper sheaf supported on $x$. Also, with the notation in the previous case, we have instead $$\textup{dim}(\mathcal{L}^k_x)^T=\left\{\begin{array}{cc}
    h^{k}(Y_1\times Y_2), & x\in \Delta(f_1)\\
    h^{k}(Y_1\times Y_2)-h^{k+1-m_2}(Y_1), & x\in \Delta(f_2)
    \end{array}\right.,$$
As a result, 
\begin{equation*}
        \begin{split}
            \textup{dim}_{\C}E^{0,m_1+m_2-2}_2(f)&=h^{0}(\mb{P}^1,j_{*}\mathcal{L}^{m_1+m_2-2})+\textup{dim}\left(\bigoplus_{x\in \Delta(f_1)}H^{m_2-2}(Y_2)\right)\\
            &=h^{m_1+m_2-2}(Y_1\times Y_2)-h^{m_1-1}(Y_1)h^{m_2-1}_{\textup{van}}(Y_2)+d_1 h^{m_2-2}(Y_2),
        \end{split}
    \end{equation*}
\begin{equation*}
        \begin{split}
            \textup{dim}_{\C}E^{1,m_1+m_2-3}_2(f)&=h^{1}(\mb{P}^1,j_{*}\mathcal{L}^{m_1+m_2-3})\\
            &=d_2h^{m_1-2}(Y_1)-2h^{m_1-2}(Y_1)h^{m_2-1}_{\textup{van}}(Y_2),
        \end{split}
    \end{equation*}
    
and \begin{equation*}
        \begin{split}
            \textup{dim}_{\C}E^{2,m_1+m_2-4}_2(f)&=h^{0}(\mb{P}^1,j_{*}\mathcal{L}^{m_1+m_2-4})\\
            &=h^{m_1+m_2-4}(Y_1\times Y_2)-h^{m_1-3}(Y_1)h^{m_2-1}_{\textup{van}}(Y_2).
        \end{split}
    \end{equation*}
 We still get the formula in Lemma \ref{dim} (2) by summing the three dimensions. 

\textbf{Case III$'$:} $H^{m_1-1}_{\textup{van}}(Y_1)=H^{m_2-1}_{\textup{van}}(Y_2)= 0$. 

In this case, we see that  $j^*j_{*}R^kf_{*}\underline{\C}_{\tilde{X}_1\times_{\mb{P}^1}\tilde{X}_2}$ is a locally constant sheaf and the kernel of the adjunction homomorphism $a_k$ is $$\bigoplus_{x\in\Delta(f_1)}H^{m_1-1}(F_x)\otimes H^{k-m_1}(Y_2)\oplus \bigoplus_{x\in\Delta(f_2)}H^{k-m_2}(Y_1)\times H^{m_2-1}(F_x).$$ Hence, \begin{equation*}
        \begin{split}
            \textup{dim}_{\C}E^{0,m_1+m_2-2}_2(f)&=h^{m_1+m_2-2}(Y_1\times Y_2)+d_1 h^{m_2-2}(Y_2)+d_2 h^{m_1-2}(Y_1),
        \end{split}
    \end{equation*}
\begin{equation*}
            \textup{dim}_{\C}E^{1,m_1+m_2-3}_2(f)=0,
    \end{equation*}
    
and \begin{equation*}
            \textup{dim}_{\C}E^{2,m_1+m_2-4}_2(f)=h^{0}(\mb{P}^1,j_{*}\mathcal{L}^{m_1+m_2-4})=h^{m_1+m_2-4}(Y_1\times Y_2).
    \end{equation*}
    
We still get the formula in Lemma \ref{dim} (2) by summing the three dimensions. 
   
\end{proof}

\begin{remark}
    Apply a similar computation, we can prove that the difference of $$ h^{m_1+m_2-1}(\tilde{X}_1\times_{\mb{P}^1}\tilde{X_2})$$ and  the dimension of degree $m_1+m_2-1$ part of $$H^{\bullet}(\tilde{X}_1)\otimes_{H^{\bullet}(\mb{P}^1)}H^{\bullet}(\tilde{X}_1)$$ is 
    $$\left(h^{m_1}(\tilde{X}_1)-2h^{m_1-2}(X_1)\right)h^{m_2-1}_{\textup{van}}(Y_2)+h^{m_1-1}_{\textup{van}}(Y_1)\left(h^{m_2}(\tilde{X}_2)-2h^{m_2-2}(X_2)\right)$$
 $$+h^{m_1-2}_{\textup{van}}(B_1)\left(h^{m_2-1}(X_2)-h^{m_2-3}(X_2)\right)+\left(h^{m_1-1}(X_1)-h^{m_1-3}(X_1)\right)h^{m_2-2}_{\textup{van}}(B_2)$$
 $$+2h^{m_1-1}_{\textup{van}}(Y_1)h^{m_2-1}_{\textup{van}}(Y_2).$$ Therefore, the difference can be nonzero even if both  $Y_1$ and $Y_2$ have no vanishing cohomology. 
      
\end{remark}

\section{Proof of Theorem \ref{thmcks2} and \ref{thmcks}}

Recall that in Section $2$, we consider the following two situations:
\begin{enumerate}
    \item[(\textbf{A$'$})] We are given a mixed Hodge structure $(H, W, F)$ defined over $\R$ and a nilpotent endomorphism $N$ on $H$ such that \begin{enumerate}
        \item $N$ is a morphism of mixed Hodge structure of type $(-1,-1)$, 
        \item $W=W(N,d)$ is the weight filtration of $N$ centered at $d$.
    \end{enumerate}
    \item[(\textbf{B$'$})] Besides the assumption in (1), we are  given a non-degenerate bilinear form $S$ on $H$ defined over $\R$, which satisfies \begin{enumerate}
    \item $S(v,u)=(-1)^dS(u,v)$,
    \item  $S(u,Nv)+S(Nu,v)=0$,
    \item $S(F^{p},F^{d-p+1})=0$ for all $p$.
\end{enumerate}
\end{enumerate}

In the situation (\textbf{B$'$}), since $W=W(N,d),$ we must have $S(W_a,W_b)=0$ for $a+b\leq 2d-1$. Therefore, for $p+q\geq d$, we can define $(s_{+}^{p,q},s_{-}^{p,q})$ to be the signature of $S(C\bullet, N^{p+q-d}\overline{\bullet})$ on the $(p,q)$-component of the Hodge structure (of weight $p+q$) on the primitive part $$P_{p+q} = \text{ker}(N^{p+q-d+1}:\text{Gr}^{W}_{p+q}H\rightarrow \text{Gr}^{W}_{2d-p-q-2}H),$$ where $C$ is the Weil operator.

Consider Deligne's weak splitting of real mixed Hodge structure (cf. \cite[Lemma-Definition 3.4]{PS}):
$$I^{p,q}:=F^p\cap W_{p+q}\cap (\overline{F^q}\cap W_{p+q}+\sum_{j\geq 2}\overline{F^{q-j+1}}\cap W_{p+q-j}).$$
Then, in the situation (\textbf{A$'$}), this splitting satisfies the following conditions:
\begin{enumerate}
    \item $W_l=\bigoplus_{p+q\leq l} I^{p,q},\ F^r=\bigoplus_{p\geq r}I^{p,q}$.
    \item $I^{p,q}\equiv \overline{I^{q,p}} \text{ mod } \bigoplus_{r\leq p-1,s\leq q-1} I^{r,s}$.
    \item $N I^{p,q}\subset I^{p-1,q-1}.$
\end{enumerate}

In the situation (\textbf{B$'$}), Deligne's splitting also satisfies
\begin{enumerate}
 \setcounter{enumi}{3}
    \item $S(I^{p,q}, I^{r,s})=0$ unless $r=d-p, \ s=d-q.$ 
    \item If we define the primitive part of $I^{p,q}$ to be $$I_{\text{prim}}^{p,q}:=I^{p,q}\cap \text{ker}N^{p+q-d+1},$$ then $(\sqrt{-1})^{p-q}S(\bullet, N^{p+q-d}\overline{\bullet})$ has signature $(s_{+}^{p,q},s_{-}^{p,q})$ on $I^{p,q}_{\text{prim}}$.
\end{enumerate}

(1), (2), (3), and (5) follow directly from the definition of Deligne's splitting. Let's prove (4) here:

\begin{proof}[Proof of (4)]
    Since $S(W_a,W_b)=0$ for $a+b\leq 2d-1$, we have $S(I^{p,q}, I^{r,s})=0$ if $r+s+p+q\leq 2d-1$. Also, since  $S(F^a,F^b)=0$ for $a+b\geq d+1$ by assumption, we have $S(I^{p,q}, I^{r,s})=0$ if $r+p\geq d+1$. Now, suppose that $r+p\leq d$, we claim that $S(I^{p,q}, I^{r,s})=0$ for $q+s\geq d+1$ too. Were this proved, the only nonzero possibility is $r=d-p$ and $s=d-q$. To see this claim, write $$I^{p,q}:=F^p\cap W_{p+q}\cap (\overline{F^q}\cap W_{p+q}+\sum_{j\geq 2}\overline{F^{q-j+1}}\cap W_{p+q-j})$$ and $$I^{r,s}:=F^r\cap W_{r+s}\cap (\overline{F^s}\cap W_{r+s}+\sum_{i\geq 2}\overline{F^{s-i+1}}\cap W_{r+s-i}).$$ Again, since $S(\overline{F^a},\overline{F^b})=0$ for $a+b\geq d+1$ and $S(W_a,W_b)=0$ for $a+b\leq 2d-1$, we have $$S(\overline{F^{q-j+1}}\cap W_{p+q-j},\overline{F^{s-i+1}}\cap W_{r+s-i})=0$$ if either $q-j+s-i\geq d-1$ or $p+q+r+s-i-j\leq 2d-1$; or equivalently $$S(\overline{F^{q-j+1}}\cap W_{p+q-j},\overline{F^{s-i+1}}\cap W_{r+s-i})$$ is nonzero only if $p+r\geq d+2$, which is empty under our assumption. Similarly, we can see that $$S(\overline{F^{q}}\cap W_{p+q},\overline{F^{s-i+1}}\cap W_{r+s-i})$$ and $$S(\overline{F^{q-j+1}}\cap W_{p+q-j},\overline{F^{s}}\cap W_{r+s})$$ are nonzero only if $p+r\geq d+1$, which is also empty. Finally, we have $$S(\overline{F^{q}}\cap W_{p+q},\overline{F^{s}}\cap W_{r+s})=0$$ for $q+s\geq d+1$. The claim is proved.
\end{proof}

The key point of our calculation is the following combinatorial lemma: 

\begin{lemma}\label{lmac}
Let $u\in V$ be a vector and $N$ a nilpotent operator on $V$. Suppose that $N^{n+1}u=0$ but $N^{n}u\neq 0$. Then for all $0\leq k\leq n+1$ we have $$\left( e^{zN}u\wedge e^{zN}Nu\wedge...\wedge e^{zN}N^{n-k}u \right)\wedge \left(e^{\overline{z}N}u\wedge e^{\overline{z}N}Nu\wedge...\wedge e^{\overline{z}N}N^{k-1}u\right)$$
$$=\frac{C_{n-k+1,k}}{((n-k+1)k)!}(\overline{z}-z)^{(n-k+1)k}u\wedge Nu\wedge...\wedge N^nu,$$ where $C_{n-k+1,k}$ is the number of standard Young tableaux of the shape $k \times (n-k+1)$ squares.
\end{lemma}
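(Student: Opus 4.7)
The plan is to reduce the wedge product to an explicit determinant and then evaluate a purely combinatorial constant. All the action takes place inside the $(n+1)$-dimensional $N$-invariant subspace $V_0 = \mathrm{Span}\{u, Nu, \ldots, N^n u\}$, on which $N$ acts as a single Jordan block with basis $v_i := N^i u$. The left-hand side then equals $\det(M)\cdot u \wedge Nu \wedge \cdots \wedge N^n u$, where $M$ is the $(n+1)\times(n+1)$ matrix whose columns are the coordinate expansions of the $n+1$ wedge factors in the basis $v_0,\ldots,v_n$.

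Since $e^{-\overline{z}N}$ is upper-unipotent on $V_0$ and hence has determinant one, left-multiplying $M$ by $e^{-\overline{z}N}$ preserves $\det M$, and transforms $M$ into a matrix $M'$ whose first $n-k+1$ columns are $e^{wN}v_j$ with $w := z - \overline{z}$ and $j = 0, \ldots, n-k$, and whose last $k$ columns are the standard basis vectors $v_0, \ldots, v_{k-1}$. Laplace-expanding along the last $k$ columns pins rows $0,\ldots,k-1$ to the identity and reduces $\det M'$, up to an explicit Laplace sign, to a determinant of size $n-k+1$ whose $(l,j)$-entry is $w^{l+k-j}/(l+k-j)!$ (treated as $0$ when $l+k-j<0$). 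Because every nonzero product along a ``permutation diagonal'' of this smaller determinant carries total exponent exactly $k(n-k+1)$ in $w$, one can pull $w^{k(n-k+1)}$ out uniformly. A short sign check then combines the Laplace sign with this power of $w$ into the target factor $(\overline{z}-z)^{k(n-k+1)}$.

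What remains is the purely combinatorial identity
$$\det\!\left(\frac{1}{(l+k-j)!}\right)_{l,j=0}^{n-k} = \frac{C_{n-k+1,k}}{\bigl((n-k+1)k\bigr)!}.$$
I would prove this by writing $1/(l+k-j)! = \binom{l+k}{j}\cdot j!/(l+k)!$, pulling out the row factors $1/(l+k)!$ and column factors $j!$, and checking that the remaining binomial determinant $\det\binom{l+k}{j}_{l,j=0}^{n-k}$ equals $1$. The latter follows by induction on $k$ using Pascal's identity $\binom{l+k+1}{j} = \binom{l+k}{j} + \binom{l+k}{j-1}$, which realizes the matrix at level $k+1$ from the matrix at level $k$ by a triangular cascade of column additions, each preserving determinants. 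The resulting ratio of factorials is matched with $C_{n-k+1,k}$ via the hook-length formula for the rectangular shape $(n-k+1)\times k$.

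The main obstacle is this last combinatorial identification; the preceding reductions are essentially bookkeeping. An alternative to the Pascal induction is to recognize the determinant as the exponential specialization of the Jacobi--Trudi expression for the Schur function $s_{(k^{n-k+1})}$, whose value equals $f^{\lambda}/|\lambda|!$ classically; but the inductive proof above is elementary and avoids invoking symmetric-function machinery.
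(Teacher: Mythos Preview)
Your argument is correct and complete. The reduction via left-multiplication by $e^{-\overline{z}N}$ immediately shows that the wedge depends only on $w=z-\overline{z}$, the Laplace expansion and the homogeneity of the minor are verified exactly as you say, and the sign bookkeeping does collapse to $+1$ (one can check $(-1)^{kn}\cdot(-1)^{k(n-k+1)}=(-1)^{k(k-1)}=1$). The evaluation $\det\bigl(\tfrac{1}{(l+k-j)!}\bigr)=\prod_j j!\big/\prod_l (l+k)!$ via the unit binomial determinant is standard, and the match with $C_{n-k+1,k}/((n-k+1)k)!$ is precisely the hook-length formula for a rectangle.

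Your route differs genuinely from the paper's. The paper argues in two steps: first a degree count shows $\Phi$ is homogeneous of degree $(n-k+1)k$ in $z,\overline{z}$; second, repeated L'H\^opital in $\overline{z}$ shows $\Phi$ vanishes to that same order along $\overline{z}=z$, forcing $\Phi=C(\overline{z}-z)^{(n-k+1)k}$, and the constant $C$ is read off directly as a count of admissible ``bumping'' sequences, which is visibly the number of standard Young tableaux of rectangular shape. The trade-off: the paper's approach produces the SYT interpretation \emph{intrinsically}, without invoking the hook-length formula, but needs the (slightly implicit) observation that homogeneity plus high-order vanishing pin down a monomial in $\overline{z}-z$. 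Your approach makes the $\overline{z}-z$ dependence transparent from the start and is purely algebraic, at the cost of importing the hook-length formula as an external ingredient. One cosmetic point: in the basis $v_i=N^iu$ the operator $e^{-\overline{z}N}$ is lower-unipotent rather than upper-unipotent, but of course this does not affect $\det=1$.
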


\begin{proof}
Let $$\Phi=\left( e^{zN}u\wedge e^{zN}Nu\wedge...\wedge e^{zN}N^{n-k}u \right)\wedge \left(e^{\overline{z}N}u\wedge e^{\overline{z}N}Nu\wedge...\wedge e^{\overline{z}N}N^{k-1}u\right).$$ We first show that $\Phi$ is a homogeneous polynomial in $z$ and $\overline{z}$ of degree $(n-k+1)k$. To see this, note that the difference of the degree in $z$ (resp, $\overline{z}$) and the degree in $N$ in each term of $e^{zN}N^lu$ (resp. $e^{\overline{z}N}N^lu$) is $l$, so the difference of the degree in $z,\ \overline{z}$ and the degree in $N$ in $\Phi$ will be $\frac{(n-k+1)(n-k)}{2}+\frac{k(k-1)}{2}$. Also, $\Phi$ is a polynomial in $z$ and $\overline{z}$ over $u\wedge Nu \wedge...\wedge N^nu$, and the degree of $u\wedge Nu \wedge...\wedge N^nu$ in $N$ is $\frac{(n+1)n}{2}$. So the degree of $\Phi$ in $z$ and $\overline{z}$ should be $$\frac{(n+1)n}{2}-\frac{(n-k+1)(n-k)}{2}-\frac{k(k-1)}{2}=(n-k+1)k.$$ 
Next we compute the limit $\lim_{\overline{z}\rightarrow z}\frac{\Phi}{(\overline{z}-z)^{k(n-k+1)}}$. It suffices to show that this limit is finite and has a coefficient equal to $\frac{C_{n-k+1,k}}{((n-k+1)k)!}$. In fact, since this limit is in indefinite form, we may apply the L'hospital rule (with respect to partial derivative in $\overline{z}$). Observe that at each time we take the partial derivative to $\Phi$ with respect to $\overline{z}$, we replace one $e^{\overline{z}N}N^lu$ by $e^{\overline{z}N}N^{l+1}u$, so the partial derivative will still be an indefinite form until $k(n-k+1)$-th partial derivative. In this case, we have already replaced $e^{\overline{z}N}u\wedge e^{\overline{z}N}Nu\wedge...\wedge e^{\overline{z}N}N^{k-1}u$ by $e^{\overline{z}N}N^{n-k+1}u\wedge e^{\overline{z}N}N^{n-k+2}u\wedge...\wedge e^{\overline{z}N}N^{n}u$, and the coefficient is just $\frac{1}{((n-k+1)k)!}$ times the number of the ways of moving $\{1,...,k-1\}$ to $\{n-k+1,...,n\}$ by replacing some element $l$ with $l+1$ but without repetition of elements. We can identify this combinatorial number as the number of standard Young tableaux of the shape $k \times (n-k+1)$ squares. The lemma is proved. 

\end{proof}

\begin{corollary}\label{cor1}
Consider the $(n+1)\times (n+1)$ upper triangular matrix $A(x)=A_{0,n+1}(x)$ which is defined using the Taylor expansion of $e^x$ by 
$$ A_{0,n+1}(x) = \left(
\begin{array}{ccccc}
1 & \frac{x}{1!} &  \cdots & \frac{x^{n-1}}{(n-1)!} & \frac{x^n}{n!} \\ 
& 1 & \frac{x}{1!} & \cdots & \frac{x^{n-1}}{(n-1)!} \\
 & &  \ddots & \ddots &  \vdots\\
  &  &  & 1 & \frac{x}{1!} \\
 &  &  &  & 1  \\
 \end{array}
 \right).$$ For $0\leq k\leq n+1$, let $A_{n-k+1,k}(x)$ be the upper right $k\times k$ submatrix of $A(x)$, i.e. $$ A_{n-k+1,k}(x) = \left(
\begin{array}{ccccc}
\frac{x^{n-k+1}}{(n-k+1)!} & \frac{x^{n-k+2}}{(n-k+2)!} &  \cdots & \frac{x^{n-1}}{(n-1)!} & \frac{x^n}{n!} \\ 
\frac{x^{n-k+2}}{(n-k+2)!} & \frac{x^{n-k+3}}{(n-k+3)!} & \cdots &\frac{x^{n-2}}{(n-2)!} & \frac{x^{n-1}}{(n-1)!} \\
 \vdots &  \ddots&  \ddots & \ddots &  \vdots\\
  &  & \cdots  & \frac{x^{n-k+1}}{(n-k+1)!} & \frac{x^{n-k+2}}{(n-k+2)!} \\
 &  &  & \cdots  & \frac{x^{n-k+1}}{(n-k+1)!}  \\
 \end{array}
 \right).$$ Then the preceding lemma says that $$\textup{det}(A_{n-k+1,k}(x))=\frac{C_{n-k+1,k}}{((n-k+1)k)!}x^{(n-k+1)k},$$ where $C_{n-k+1,k}$ is the number of standard Young tableaux of the shape $k \times (n-k+1)$ squares.
\end{corollary}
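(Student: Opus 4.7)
The plan is to deduce the determinant formula immediately from Lemma \ref{lmac} by a suitable specialization, with almost nothing to do beyond matching up indices.

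To bridge between the two statements, the first step is to take a concrete model fitting the hypotheses of Lemma \ref{lmac}: let $V = \C^{n+1}$ with basis $\{e_0, \ldots, e_n\}$, define $N$ by $Ne_i = e_{i+1}$ for $i<n$ and $Ne_n = 0$, and set $u = e_0$. Then $N^j u = e_j$, so $u, Nu, \ldots, N^n u$ is a basis, $N^{n+1}u = 0$, and $N^n u \neq 0$. I observe next that the identity in Lemma \ref{lmac} is really a polynomial identity in $z$ and $\overline{z}$ treated as independent variables: its proof proceeds by partial differentiation in $\overline{z}$ at fixed $z$, and equivalently both sides are polynomials in $\C[z, \overline{z}]$ which agree on the real locus where $\overline{z}$ is the complex conjugate of $z$, hence agree identically. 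One may therefore freely substitute arbitrary scalar values for them.

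Specializing to $z = 0$ and $\overline{z} = x$, the left-hand side of Lemma \ref{lmac} becomes the wedge product
\[
\omega := (e_0 \wedge e_1 \wedge \cdots \wedge e_{n-k}) \wedge (e^{xN} e_0 \wedge e^{xN} e_1 \wedge \cdots \wedge e^{xN} e_{k-1}).
\]
Expressed in the basis $\{e_0, \ldots, e_n\}$, the $(n+1)\times(n+1)$ coefficient matrix $M$ of these columns is block upper triangular,
\[
M \;=\; \begin{pmatrix} I_{n-k+1} & \ast \\ 0 & B \end{pmatrix},
\]
because $e_0, \ldots, e_{n-k}$ have no contribution in rows $n-k+1, \ldots, n$. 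Using $e^{xN} e_l = \sum_{i \geq l} \tfrac{x^{i-l}}{(i-l)!}\, e_i$, a direct index-chase shows that the $k \times k$ block $B$ has $(a,l)$-entry $\tfrac{x^{(n-k+1)+a-l}}{((n-k+1)+a-l)!}$, which is precisely the transpose of $A_{n-k+1,k}(x)$; thus $\det M = \det A_{n-k+1,k}(x)$. On the other hand, Lemma \ref{lmac} asserts that $\omega = \tfrac{C_{n-k+1,k}}{((n-k+1)k)!} x^{(n-k+1)k} \, e_0 \wedge \cdots \wedge e_n$, and comparing the two expressions for the coefficient of $e_0 \wedge \cdots \wedge e_n$ produces the claimed formula.

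There is no genuine obstacle here: the only step demanding any care is the identification $B = A_{n-k+1,k}(x)^T$, which is a routine matching of Toeplitz indices in the Taylor expansion of $e^{xN}$, together with the legitimacy of substituting independent scalar values for $z$ and $\overline{z}$, which follows from the polynomial nature of both sides.
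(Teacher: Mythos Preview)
Your proof is correct and takes essentially the same approach as the paper: the corollary is stated there as an immediate restatement of Lemma~\ref{lmac} with no separate proof, and you have simply spelled out the translation by choosing a concrete model for $(V,N,u)$, specializing $(z,\overline{z})\mapsto(0,x)$ in the polynomial identity, and identifying the resulting block-triangular coefficient matrix with $A_{n-k+1,k}(x)^T$. Your justification that the lemma is a genuine polynomial identity in independent variables $z,\overline{z}$ (so that the specialization is legitimate) is exactly right and matches how the lemma is proved.
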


\begin{proof}[Proof of Theorem \ref{thmcks2}]

For $p+q\geq d$, We choose a basis $\{u_i^{p,q}|i\in J_{\textup{prim}}^{p,q}\}$ of $I_{\textup{prim}}^{p,q}$ for each $p,q$, where $J^{p,q}_{\textup{prim}}$ is an (ordered) indexed set, such that $J^{p,q}_{\textup{prim}}=J^{q,p}_{\textup{prim}}$ and $\overline{u_i^{p,q}}=u_i^{q,p}+s_i^{q,p}$ with $s_i^{q,p}\in W_{q+p-2}$. Using the weight filtration of $N$, we may extend this basis to a basis $\{u_i^{p,q}|i\in J^{p,q}\}$ of $I^{p,q}$, where $J^{p,q}=\coprod_{j\geq 0}J_{\textup{prim}}^{p+j,q+j}$ for $p+q\geq d$ and $J^{p,q}=\coprod_{j\geq d-p-q}J_{\textup{prim}}^{p+j,q+j}$ for $p+q<d$, such that $$ N^lu_i^{p,q}= u_i^{p-l,q-l}$$ for all $i\in J^{p,q}_{\textup{prim}}$ and $0\leq l\leq p+q-d$, and such that $$\overline{u_i^{p,q}}=u_i^{q,p}+s_i^{q,p},\ \ \ s_i^{q,p}\in W_{q+p-2}$$ for all $p,q$ and $i\in J^{p,q}$.

We would like to show that $\textup{exp}({zN})F^{k}\oplus \overline{\textup{exp}(zN)F^{d-k+1}}=H$ for $\textup{Im}(z)$ sufficiently large. Without loss of generality, we may assume $k\leq d-k+1$. Also, since $$\textup{exp}({(z+1)N})F^{k}\oplus \overline{\textup{exp}((z+1)N)F^{d-k+1}}=\textup{exp}(N)\left(\textup{exp}({zN})F^{k}\oplus \overline{\textup{exp}(zN)F^{d-k+1}}\right),$$ we may and do assume $z$ lies in a domain with bounded real parts. It suffices to show that $$\frac{(\bigwedge_{p\geq k,\  i\in J^{p,q}}e^{zN}u_i^{p,q})\wedge(\bigwedge_{p\geq d-k+1,\  i\in J^{p,q}}\overline{e^{zN}u_i^{p,q}})}{\bigwedge_{p,q\geq 0,\ i\in J^{p,q}} u_i^{p,q}}\neq 0$$ for $\textup{Im}(z)$ sufficiently large.

Let $u_i^{p,q}\in I^{p,q}_{\textup{prim}}$ with $i\in J^{p,q}_{\textup{prim}}$ and $p\geq k,\ q\geq d-k+1$, we first compute $$\Phi_i^{p,q}:=\left(\left(\bigwedge_{0\leq l\leq p-k} e^{zN}u^{p-l,q-l}_i\right) \wedge \left(\bigwedge_{0\leq l\leq q-d+k-1} e^{\overline{z}N}\overline{u^{q-l,p-l}_i}\right)\right).$$ 
$$=\left(\bigwedge_{0\leq l\leq p-k} e^{zN}N^lu^{p,q}_i\right) \wedge \left(\bigwedge_{0\leq l\leq q-d+k-1} e^{\overline{z}N}(N^lu_i^{p,q}+s_i^{p-l,q-l})\right)$$
Then by the preceding lemma, we can write $$\Phi^{p,q}_i=\Psi^{p,q}_i+\Xi_{i}^{p,q},$$ where $$\Psi^{p,q}_i=\frac{C_{p-k+1,q-d+k}}{((p-k+1)(q-d+k))!}(\overline{z}-z)^{(p-k+1)(q-d+k)}u_i^{p,q}\wedge u_i^{p-1,q-1}\wedge...\wedge u_i^{d-q,d-p},$$  and

$$\Xi_{i}^{p,q}= \left(\bigwedge_{0 \leq l\leq p-k }e^{zN}u_i^{p-l,q-l}\right)\wedge\left( \sum_{0\leq j\leq q-d+k-1} e^{\overline{z}N}s_i^{p,q} \wedge  \right....\wedge  e^{\overline{z}N}s_i^{p-j,q-j}\wedge $$$$\left. e^{\overline{z}N}\overline{u_i^{p-j-1,q-j-1}} \wedge ...\wedge e^{\overline{z}N}\overline{u_i^{d-k+1.p-q+d-k+1}}\right) $$

On the other hand, for $u_i^{p,q}\in I^{p,q}_{\textup{prim}}$ with $i\in J^{p,q}_{\textup{prim}}$ and $p\geq k,\ q< d-k+1$. We have $$\Phi_i^{p,q}:=\bigwedge_{0\leq l\leq p+q-d} e^{zN}N^lu^{p,q}_i=u_i^{p,q}\wedge u_i^{p-1,q-1}\wedge...\wedge u_i^{d-q,d-p}.$$

If the mixed Hodge structure splits over $\R$, i.e. $s_i^{p,q}=0$ for all $i,p,q$, then $\Xi_{i}^{p,q}=0$ too. Then we have $$\frac{(\bigwedge_{p\geq k,\  i\in J^{p,q}}e^{zN}u_i^{p,q})\wedge(\bigwedge_{p\geq d-k+1,\  i\in J^{p,q}}\overline{e^{zN}u_i^{p,q}})}{\bigwedge_{p,q\geq 0,\ i\in J^{p,q}} u_i^{p,q}}$$$$= C (\overline{z}-z)^{\sum_{p\geq k, q\geq d-k+1 }|J^{p,q}_{\textup{prim}}|(p-k+1)(q-d+k)},\ \ C\neq 0,$$ which is nonzero for $\text{Im}(z)\neq 0$ and the theorem is proved.

In the general case, we need to estimate the remainder term arising from $\Xi_{i}^{p,q}$.
We can decompose  $\Xi_{i}^{p,q} $ into a sum of terms $\Xi_{i,j_{i}}^{p,q}$ such that each $\Xi_{i,j_{i}}^{p,q}$ is  a wedge product of some of the basis $u_{i'}^{p',q'}$ of $H$ with polynomial (in $\overline{z}$ and $z$) coefficient.
Consider two numbers associated to $\Xi_{i,j_{i}}^{p,q}$. The first number is the degree difference $$\sigma(\Xi_{i,j_{i}}^{p,q}):=\textup{deg}_{z,\ \overline{z}}(\Xi_{i,j_{i}}^{p,q})-\textup{deg}_{z,\ \overline{z}}(\Psi_i^{p,q}),$$ and the second number is the weight difference $$\tau(\Xi_{i,j_{i}}^{p,q}):=\textup{wt}(\Xi_{i,j_{i}}^{p,q})-\textup{wt}(\Psi_i^{p,q}),$$ where the weight $\textup{wt}$ of a wedge of basis $u_{i'}^{p',q'}$ is defined as the sum of all $p'+q'$. The key observation is that $$\tau+2\sigma \leq -1.$$ Roughly speaking, this is because when the degree in $z,\ \overline{z}$ grows by one, the weight will decrease by at least two due to the nilpotent operator $N,$ and the $-1$ comes from the terms $s^{p',q'}_i$. 

Let's make an explicit computation. First by Lemma \ref{lmac}, we know $$\textup{wt}(\Psi^{p,q}_i)=d(p+q-d+1)$$ and $$\textup{deg}_{z,\overline{z}}(\Psi^{p,q}_i)=(p-k+1)(q-d+k).$$ Note that in the expression of $\Xi^{p,q}_{i}$, we have $\textup{deg}_N(\Xi^{p,r}_{i,j_{i}})=\textup{deg}_{z,\overline{z}}(\Xi^{p,q}_{i,j_{i}}),$ where $\textup{deg}_N(\Xi_{i,j_{i}}^{p,q})$ denotes the degree of $N$ of the term forming by the wedge of some $N^lu^{p',q'}_i,\ N^ls^{p',r'}_i$ that $\Xi_{i,j_{i}}^{p,r}$ comes from. Also, we have $$\textup{wt}(\Xi^{p,q}_{i,j_{i}})\leq \sum_{0\leq l\leq p-k}\textup{wt}(u_i^{p-l,q-l})+  \sum_{0\leq l\leq q-d+k-1}\textup{wt}(u_i^{p-l,q-l})-2\textup{deg}_N(\Xi^{p,q}_{i,j_{i}})-1 $$$$= (k+q)(p-k+1)+(p+d-k+1)(q-d+1)-2\textup{deg}_N(\Xi^{p,q}_{i,j_{i}})-1.$$ Here, $-1$ appears due to the existence of $s_i^{p',q'}$, and that $s_i^{p',r'}$ has weight at most $p'+q'-1.$ Therefore, we have $$\tau(\Xi_{i,j_{i}}^{p,q})+2\sigma(\Xi_{i,j_{i}}^{p,q})\leq (k+r)(p-k+1)+(p+d-k+1)(r-d+k)-1 $$$$-d(p+r-d+1)-2(p-k+1)(r-d+k)=-1.$$

Finally, note that the terms in the top wedge formed by the wedge of some $\Xi_{i,j_{i}}^{p,q}$ and some $\Psi_{i}^{p,q}$ with at least one $\Xi_{i,j_{i}}^{p,q}$ must satisfy $$\sum \tau(\Xi_{i,j_{i}}^{p,q})=0.$$ Using the inequalities $$\tau(\Xi_{i,j_{i}}^{p,q})+2\sigma(\Xi_{i,j_{i}}^{p,q})\leq -1,$$ it implies that $$\sum \sigma(\Xi_{i,j_{i}}^{p,q})\leq -1.$$ So these terms have degree in $z,\ \overline{z}$ strictly less than the top wedge of $\Psi_{i}^{p,r}$. Therefore, if we write $z=a+\sqrt{-1}t$, we can conclude that 
 $$\frac{(\bigwedge_{p\geq k,\  i\in J^{p,q}}e^{zN}u_i^{p,q})\wedge(\bigwedge_{p\geq d-k+1,\  i\in J^{p,q}}\overline{e^{zN}u_i^{p,q}})}{\bigwedge_{p,q\geq 0,\ i\in J^{p,q}} u_i^{p,q}}$$
 $$= C (\overline{z}-z)^{\sum_{p\geq k, q\geq d-k+1 }|J^{p,q}_{\textup{prim}}|(p-k+1)(q-d+k)}+O(|t|^{-1+\sum_{p\geq k, q\geq d-k+1 }|J^{p,q}_{\textup{prim}}|(p-k+1)(q-d+k)}),$$ where $C\neq 0$ and $z$ lies in a domain with bounded real parts by our assumption.

This proves the theorem.

\end{proof}

\begin{proof}[Proof of Theorem \ref{thmcks}]
We adopt the notation in the proof of Theorem \ref{thmcks2}. In this case, since $$S(e^{(z+1)N}v,\overline{e^{(z+1)N}u})=S(e^{zN}v,\overline{e^{zN}u}),$$ we may and do assume $z$ lies in a domain with bounded real parts.

We choose the basis $u_i^{p,q}$ of $I^{p,q}_{\textup{prim}}$ again so that the Hermitian form $S(C\bullet, N^{p+q-d}\overline{\bullet})$ on $I^{p,q}_{\textup{prim}}$ is diagonalized under this basis. We also make the basis satisfies $$\overline{u_i^{p,q}}=u_i^{q,p}+s_i^{q,p},\ s_i^{q,p}\in W_{p+q+2},$$ which is possible by the condition (4) of Deligne's splitting. In this case, we also separate $J^{p,q}_{\textup{prim}}$ into the disjoint union of $J^{p,q}_{\textup{prim},+}$ and $J^{p,q}_{\textup{prim},-}$, where $u_i^{p,q}$ is a positive (resp. negative) eigenvector of $S(C\bullet, N^{p+q-d}\overline{\bullet})$ for $i\in J^{p,q}_{\textup{prim},+}$ (resp. $J^{p,q}_{\textup{prim},-}$). By assumption, $|J^{p,q}_{\textup{prim},\pm}|=s_{\pm}^{p,q}$. 

Consider the basis $\left\{N^r u_i^{p,q}\mid 0\leq p,q\leq d,\ i\in J^{p,q}_{\textup{prim}},\ 0\leq r\leq p+q-d  \right\}.$ We give a well-ordering on this indexed set $(p,q,i,r)$ by $(p,q,i,r)>(p',q',i',r')$ if 
\begin{enumerate}
    \item $p-r> p'-r'$,
    \item $p-r= p'-r'$ and $q-r> q'-r'$,
    \item $p-r= p'-r'$ and $q-r= q'-r'$ and $r> r'$,
    \item $p-r= p'-r'$ and $q-r= q'-r'$ and $r= r'$ and $i>i'$.
\end{enumerate}
Here, the well-ordering on $i\in J^{p,q}_{\textup{prim}}$ is chosen arbitrarily. We will prove that the decreasing filtration $\hat{F}^{\bullet}$ constructed using the well-ordered basis above satisfies the desired property in Theorem \ref{thmcks} (1).

On the other hand, we need to show that the signature of the Hermitian form $S(C\bullet, \overline{\bullet})$ on $\textup{exp}(zN)F^k\bigcap \overline{\textup{exp}(zN)F^{d-k}}$ becomes $$\left(\sum_{\substack{l\geq k-d, \\ r\geq(-l)_{+}} }s_{+}^{k+r,d+l-k+r}, \ \sum_{\substack{l\geq k-d, \\ r\geq(-l)_{+}} }s_{-}^{k+r,d+l-k+r}\right)$$ for $\textup{Im}(z)$ sufficiently large.

Besides the opposedness condition proved in Theorem \ref{thmcks2}, we notice that $\textup{exp}(zN)F^{\bullet}$ satisfies the first Hodge-Riemann bilinear relation: $$S(\textup{exp}(zN)F^{k},\textup{exp}(zN)F^{d-k+1})=S(F^k,F^{d-k+1})=0,$$ where we use the first Hodge-Riemann bilinear relation for $F^{\bullet}$ and the fact that $$S(e^{zN}u,e^{zN}v)=S(u,v)$$ for all $u,v$.

As a result, it suffices to prove that for $0\leq k\leq d,$ the signature of the Hermitian form  $(\sqrt{-1})^d S(\bullet,\overline{\bullet})$ on $\textup{exp}(zN)F^{k}$  is 
\begin{equation}\label{eq2}
\mathlarger{\mathlarger{\sum}}_{j=k}^{d}(-1)^{d-j}\left(\sum_{\substack{l\geq j-d, \\ r\geq(-l)_{+}} }s_{+}^{j+r,d+l-j+r}-\sum_{\substack{l\geq j-d, \\ r\geq(-l)_{+}} }s_{-}^{j+r,d+l-j+r}\right)
\end{equation} for $\textup{Im}(z)$ sufficiently large.

Let's start with the simplest case first. For all $p, q, k$  with  $d-q\leq k\leq p$ and any $i\in J^{p,q}_{\textup{prim},\pm}$, consider the subspace  $$V^{p,q}_{k.i}:=\bigoplus_{0\leq l\leq p-k} N^l\left(\C u^{p,q}_i\right)$$ of $F^k$. Write $z=a+\sqrt{-1}t$. We have $$S(e^{zN}N^r u_i^{p,q},e^{\overline{z}N}N^s \overline{ u_{i}^{p,q}})=(-1)^{r} S(u_i^{p,q},e^{(\overline{z}-z)N}N^{r+s}\overline{u_{i}^{p,q}})$$$$=\left\{\begin{array}{cc}
     (-1)^r\frac{(\overline{z}-z)^{p+q-d-r-s}}{(p+q-d-r-s)!}S(u_i^{p,q},N^{p+q-d}\overline{u_{i}^{p,q}})+O(|t|^{p+q-d-r-s-1}), & r+s\leq p+q-d  \\
    0 ,&  r+s>p+q-d
 \end{array}\right..$$

Then the following lemma holds:

\begin{lemma}\label{com}
For $d-q\leq k\leq p$, let $B^{p,q}_k(z,\overline{z})$ be the $(p-k+1)\times (p-k+1)$ Hermitian matrix $(b^{p,q}_{k,r,s})_{0\leq r,s\leq p-k}$ with $$b^{p,q}_{k,r,s}=\left\{\begin{array}{cc}
(\sqrt{-1})^{d-p+q}(-1)^r\frac{(\overline{z}-z)^{p+q-d-r-s}}{(p+q-d-r-s)!}, & r+s\leq p+q-d \\
0, & r+s>p+q-d
\end{array}\right. .$$ Then \begin{enumerate}
    \item We have $$\textup{det}\left(B_k^{p,q}(z,\overline{z})\right)
=\left((-1)^{\sum_{j=k}^p(d-j)}\right)\frac{C_{q-d+k,p-k+1}(2\textup{Im}(z))^{(q-d+k)(p-k+1)}}{((q-d+k)(p-k+1))!},$$ where $C_{q-d+k,p-k+1}$ is the number of standard Young tableaux of the shape $(q-d+k) \times (p-k+1)$ squares.
\item For $\textup{Im}(z)\neq 0$, if we perform the block diagonalization to $B^{p,q}_k(z,\overline{z})$ as in Lemma \ref{linear}, the diagonal matrix $D^{p,q}_k(z,\overline{z})$ becomes  $$\textup{diag}\left(...,(-1)^{d-l}\tilde{C}_l^{p,q}(\textup{Im}(z))^{q-d-p+2l},...\right)_{l=p,p-1,p-2,...,k},$$ where $$\tilde{C}_l^{p,q}=2^{q-d-p+2l}\left(\frac{C_{q-d+l,p-l+1}}{C_{q-d+l+1,p-l}}\right)\left(\frac{((q-d+l+1)(p-l))!}{((q-d+l)(p-l+1))!}\right).$$ In particular, the signature of $B^{p,q}_k(z,\overline{z})$ is $\sum_{j=k}^p(-1)^{d-j}$.
\end{enumerate}
\end{lemma}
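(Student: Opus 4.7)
The plan is to reduce part (1) to Corollary \ref{cor1} via a change of variables, and then bootstrap part (2) using Lemma \ref{linear}. Set $n=p+q-d$ and $m=p-k$, so $B_k^{p,q}$ is an $(m+1)\times(m+1)$ matrix. After performing the column reversal $s\mapsto m-s$, the $(r,j)$-entry becomes $(\sqrt{-1})^{d-p+q}(-1)^{r}\frac{(\overline{z}-z)^{(n-m)+j-r}}{((n-m)+j-r)!}$, which is exactly $(\sqrt{-1})^{d-p+q}(-1)^{r}$ times the $(r,j)$-entry of $A_{n-m,\,m+1}(\overline{z}-z)=A_{q-d+k,\,p-k+1}(\overline{z}-z)$. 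The column-reversal permutation and the sign diagonal $\mathrm{diag}((-1)^{r})_{r=0}^{m}$ each have determinant $(-1)^{m(m+1)/2}$, so their signs cancel, and Corollary \ref{cor1} gives
\[
\det B_k^{p,q}(z,\overline{z}) \;=\; (\sqrt{-1})^{(d-p+q)(p-k+1)}\,\frac{C_{q-d+k,\,p-k+1}}{((q-d+k)(p-k+1))!}\,(\overline{z}-z)^{(q-d+k)(p-k+1)}.
\]

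The remainder of part (1) is sign bookkeeping. Writing $\overline{z}-z=-2\sqrt{-1}\,\mathrm{Im}(z)$ and extracting the positive factor $2^{(q-d+k)(p-k+1)}$, and using the identity $\sum_{j=k}^{p}(d-j)=(p-k+1)(2d-p-k)/2$, the sign identity I must verify is
\[
(\sqrt{-1})^{(2q-p+k)(p-k+1)}\,(-1)^{(q-d+k)(p-k+1)} \;=\; (-1)^{(p-k+1)(2d-p-k)/2}.
\]
The exponent $(2q-p+k)(p-k+1)=2q(p-k+1)-(p-k)(p-k+1)$ is even, so the left side is a genuine $\pm 1$; after halving that exponent, the difference between the two integer $(-1)$-exponents simplifies to $2(p-k+1)(q-d+k)$, which is manifestly even. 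This completes part (1).

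For part (2), the key observation is that the leading principal $l\times l$ submatrix of $B_k^{p,q}$ (indexed by $r,s\in\{0,\dots,l-1\}$) is itself of the same type and equals $B_{p-l+1}^{p,q}$. Thus part (1) supplies the closed form for every leading minor $M_l=\det B_{p-l+1}^{p,q}$. Setting $l'=p-l+1$ and forming $M_l/M_{l-1}$, the sign collapses to $(-1)^{d-l'}$ (the single term dropped between $\sum_{j=l'}^{p}(d-j)$ and $\sum_{j=l'+1}^{p}(d-j)$); the exponent of $2\,\mathrm{Im}(z)$ telescopes by the elementary identity $a(b+1)-(a+1)b = a-b$ (with $a=q-d+l'$, $b=p-l'$) to $q-d-p+2l'$; and the combinatorial factors assemble, after absorbing $2^{q-d-p+2l'}$ into the constant, into exactly $\tilde C_{l'}^{p,q}$. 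Applying Lemma \ref{linear} then yields the claimed diagonal form for $D_k^{p,q}(z,\overline{z})$. Finally, since $\tilde C_{l}^{p,q}>0$ and $\mathrm{Im}(z)>0$ (as $z$ lies in the upper half-plane, which is the setting in which Lemma \ref{com} is invoked), the sign of the $l$-th diagonal entry is $(-1)^{d-l}$, and summing over $l=k,\dots,p$ gives signature $\sum_{j=k}^{p}(-1)^{d-j}$. The main obstacle I anticipate is nothing deeper than accurate sign bookkeeping in part (1)—the interplay of the four sources of sign (the scalar $(\sqrt{-1})^{d-p+q}$, the row signs $(-1)^{r}$, the column-reversal sign, and the conversion $\overline{z}-z\to\mathrm{Im}(z)$)—after which part (2) and the signature claim follow mechanically.
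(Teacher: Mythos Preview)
Your proof is correct and follows essentially the same route as the paper's own argument. The only cosmetic difference is that you relate $B_k^{p,q}$ to $A_{q-d+k,\,p-k+1}$ via the column reversal $s\mapsto m-s$, whereas the paper uses the row reversal (``reflection about the middle row''); in both cases the permutation sign $(-1)^{m(m+1)/2}$ cancels against the row-sign determinant $\prod_r(-1)^r$, and the remaining sign bookkeeping and the use of Lemma~\ref{linear} on the nested principal submatrices $B_l^{p,q}$ are identical.
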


Assuming Lemma \ref{com}, then the Hermitian matrix $$\left((\sqrt{-1})^dS(e^{zN}N^r u_i^{p,q},e^{\overline{z}N}N^s \overline{ u_{i}^{p,q}})\right)_{0\leq r,s\leq p-k}$$$$= \left((\sqrt{-1})^{p-q}S(u_i^{p,q},N^{p+q-d}\overline{u_{i}^{p,q}})\right)B_{k}^{p,q}(z,\overline{z})+R_{k,i}^{p,q}(z,\overline{z}),$$ where each entry of $R_{k,i}^{p,q}(z,\overline{z})$ is either $0$ or has degree (in $z,\overline{z}$) strictly less than the corresponding entry of $B_k^{p,q}(z,\overline{z})$. As a result, we see that \begin{enumerate}
    \item The restriction of the filtration $\hat{F}^{\bullet}$ on $V^{p,q}_{k,i}$ satisfies the desired condition in Theorem \ref{thmcks} (1) for $\textup{Im}(z)$ sufficiently large.
    \item If we perform the block diagonalization to the Hermitian matrix $$\left((\sqrt{-1})^dS(e^{zN}N^r u_i^{p,q},e^{\overline{z}N}N^s \overline{ u_{i}^{p,q}})\right)_{0\leq r,s\leq p-k}$$ as in Lemma \ref{linear}, the diagonal matrix becomes $$\textup{diag}\left(...,(-1)^{d-p+r}\tilde{C}_{p-r}^{p,q}\lambda_i^{p,q}(\textup{Im}(z))^{p+q-d-2r}+O(|t|^{p+q-d-2r-1}),...\right)_{0\leq r\leq p-k},$$ where $$\lambda_i^{p,q}=(\sqrt{-1})^{p-q}S(u_i^{p,q},N^{p+q-d}\overline{u_{i}^{p,q}}).$$  In particular, the signature of the Hermitian matrix is $\pm \left(\sum_{j=k}^p(-1)^{d-j}\right)$ where $\pm$ depends on whether $i\in J^{p,q}_{\textup{prim},+}$ or $i\in J^{p,q}_{\textup{prim},-}$. This shows that the formula (\ref{eq2}) holds in this simplest case.
\end{enumerate}

\begin{proof}[Proof of Lemma \ref{com}]
 Note that under suitable symmetry (the reflection about the middle row), the matrix $B_k^{p,q}(z,\overline{z})$ is related to the matrix $A_{q-d+k,p-k+1}(\overline{z}-z)$ that we define in Corollary \ref{cor1}, and then we have \begin{equation*}
\begin{split}
    \textup{det}\left(B_k^{p,q}(z,\overline{z})\right)&=(\sqrt{-1})^{(d-p+q)(p-k+1)}(-1)^{\frac{(p-k)(p-k+1)}{2}} \\
    &   \ \ \ \ \ \ \ \ \ \ \ \ \ \ (-1)^{\frac{(p-k)(p-k+1)}{2}}\textup{det}\left(A_{q-d+k,p-k+1}(\overline{z}-z)\right).
\end{split}
\end{equation*} The first $(-1)^{\frac{(p-k)(p-k+1)}{2}}$ corresponds to the $(-1)^r$ factor in each entry and the second $(-1)^{\frac{(p-k)(p-k+1)}{2}}$ corresponds the permutation of rows of $B_k^{p,q}(z,\overline{z})$ to match those of $A_{q-d+k,p-k+1}(\overline{z}-z)$. Thus, by  Corollary \ref{cor1} we have $$\textup{det}\left(B_k^{p,q}(z,\overline{z})\right)=(\sqrt{-1})^{(d-p+q)(p-k+1)}\frac{C_{q-d+k,p-k+1}}{((q-d+k)(p-k+1))!}(\overline{z}-z)^{(q-d+k)(p-k+1)}$$
$$=\left((-1)^{\sum_{j=k}^p(d-j)}\right)\frac{C_{q-d+k,p-k+1}(2\textup{Im}(z))^{(q-d+k)(p-k+1)}}{((q-d+k)(p-k+1))!}.$$

On the other hand, since for $k\leq l\leq p$, $B_l^{p,q}(z,\overline{z})$ is the leading principal $(p-l)\times (p-l)$ submatrix of $B_k^{p,q}(z,\overline{z})$, one sees in particular that the leading principal $(p-l)\times (p-l)$ submatrices of $B_k^{p,q}(z,\overline{z})$ are non-degenerate for $\textup{Im}(z)\neq 0$. Thus, by Lemma \ref{linear}, the second assertion follows. 
\end{proof}

 For all $p,q,r,p',q',r,s$, we define the number $$n_{p,q,p',q',r,s}=\textup{min}\{p+q-d-r-s,p'+q'-d-r-s\}.$$ Then for all $(p,q,r,i)$ and $(p',q',s,i')$, we have $$(\sqrt{-1})^dS(e^{zN}N^r u_i^{p',q'},e^{\overline{z}N}N^s \overline{ u_{i'}^{p,q}})$$
$$=(\sqrt{-1})^d(-1)^rS(u_i^{p,q},e^{(\overline{z}-z)N}N^{r+s} \overline{ u_{i'}^{p',q'}})$$ 
$$=\left\{\begin{aligned}[c]
   & \begin{multlined}
     (\sqrt{-1})^d (-1)^r\frac{(\overline{z}-z)^{n_{p,q,p',q',r,s}}}{n_{p,q,p',q',r,s}!}S(u_i^{p,q},N^{n_{p,q,p',q',r,s}+r+s}\overline{u_{i'}^{p',q'}})+O(|t|^{n_{p,q,p',q',r,s}-1}), \\\textup{ when } n_{p,q,p',q',r,s} \geq 0 \end{multlined} \\
 &0, \ \ \textup{ when } n_{p,q,p',q',r,s} <0.
 \end{aligned}\right.$$ We can also notice that $S(u_i^{p,q},N^{n_{p,q,p',q',r,s}+r+s}\overline{u_{i'}^{p',q'}})=0$ unless $(p,q,i)=(p',q',i')$.

If the mixed Hodge structure is split over $\R$, then all of the terms $O(|t|^{n_{p,q,p',q',r,s}-1})$ and $R^{p,q}_{k,i}$ above are zero. Moreover, for $(p,q,i)\neq (p',q',i')$, $V^{p,q}_{k,i}$ and $V^{p',q'}_{k,i'}$ are mutually orthogonal with respect to the Hermitian form $(\sqrt{-1})^d S(e^{zN}\bullet,\overline{e^{zN}\bullet})$. Hence, according to our computation for the restriction of the Hermitian form $(\sqrt{-1})^{d}S(e^{zN}\bullet,\overline{e^{zN}\bullet})$ on $V^{p,q}_{k,i}$, we can conclude that for all $\textup{Im}(z)>0$, the filtration $\hat{F}^{\bullet}$ satisfies the desired condition in Theorem \ref{thmcks} (1) and the signature of the Hermitian form  $(\sqrt{-1})^d S(\bullet,\overline{\bullet})$ on $\textup{exp}(zN)F^{k}$ is as in (\ref{eq2}), and the theorem is proved.

In general, consider the Hermitian matrix $$M=\left(M_{ p,q, p',q',r,s,i,i'}\right)_{p,q, p',q',r,s,i,i'}:=\left((\sqrt{-1})^d S(e^{zN}N^ru^{p,q}_i,\overline{e^{zN}N^su^{p',q'}_{i'}})\right)_{p,q, p',q',r,s,i,i'},$$ 
where $p\geq k, \ i\in J_{\textup{prim}}^{p,q},  \ 0\leq r\leq \textup{max}\{p-k,p+q-d\}$ and $p'\geq k, \  i'\in J_{\text{prim}}^{p',q'}, \  0\leq s\leq \textup{max}\{p'-k,p'+q'-d\}.$ We will perform the block diagonalization to $M$ as in Lemma \ref{linear} with respect to the well-ordering on $(p,q,i,r)$ (from large to small). We want to show that the terms $O(|t|^{n_{p,q,p',q',r,s}-1})$ above don't contribute to the leading terms in the block diagonalization process. Theorem \ref{thmcks} (1) is proved if we show that the successive diagonal entries are nonzero. The signature formula (\ref{eq2}) follows if we can show that after the block diagonalization, the diagonal entry $M_{p,q,p,q,r,r,i,i}$ of $M$ becomes $$(-1)^{d-p+r}\tilde{C}_{p-r}^{p,q}\lambda_i^{p,q}(\textup{Im}(z))^{p+q-d-2r}+O(|t|^{p+q-d-2r-1})$$ for all $(p,q,i,r)$.

We recall that the block diagonalization of a Hermitian matrix  $\left(\begin{array}{cc}
   A  & B \\
    B^* & C
\end{array}\right)$ with $A$ invertible is given by $$\left(\begin{array}{cc}
   A  & 0 \\
    0 & C-B^*A^{-1}B
\end{array}\right).$$ Let's call the terms coming from $-B^*A^{-1}B$ the supplementary terms to the corresponding entries of $C$. We will prove by induction that the following situation holds:
 
 Before we perform the $(p,q,i,r)-$diagonalization step, we have \begin{enumerate}
    \item For $(p',q',i',r')$ and $(p'',q'',i'',r'')$ such that at least one of them $> (p,q,i,r)$ and such that  $(p',q',i',r') \neq (p'',q'',i'',r'')$, $$M_{p',q',p'',q'',r',r'',i',i''}=0.$$
    \item For $(p',q',i',r')\geq (p,q,i,r )$, $$M_{p',q',p',q',r',r',i',i'}=(-1)^{d-p'+r'}\tilde{C}_{p'-r'}^{p',q'}\lambda_i^{p',q'}(\textup{Im}(z))^{p'+q'-d-2r'}+O(|t|^{p'+q'-d-2r'-1}).$$
    \item For $(p',q',i',r')$ and $(p'',q'',i'',r'')$, both $\leq (p,q,i,r)$ and such that  $(p',q',i')\neq (p'',q'',i'')$, $$M_{p',q',p'',q'',r',r'',i',i''}=O(|t|^{n_{p',q',p'',q'',r',r''}-1}).$$ 
    \item For $(p',q',i',r')$ and $(p',q',i',r'')$, both $\leq (p,q,i,r)$, $$M_{p',q',p',q',r',r'',i',i'}=O(|t|^{n_{p',q',p',q',r',r''}}).$$
    \end{enumerate}
    After we perform the $(p,q,i,r)-$diagonalization step, we have 
   \begin{enumerate}
 \setcounter{enumi}{4} 
    \item For $(p',q',i',r')$ and $(p'',q'',i'',r'')$, both $< (p,q,i,r)$, and such that either $(p',q',i')\neq (p,q,i)$ or $(p'',q'',i'')\neq (p,q,i)$, the supplementary term to the entry $M_{p',q',p'',q'',r',r'',i',i''}$ has order $$O(|t|^{n_{p',q',p'',q'',r',r''}-1}).$$
    \item For $(p,q,i,r')$ and $(p,q,i,r'')$, both $< (p,q,i,r)$, the supplementary term to $M_{p,q,p,q,r',r'',i,i}$ has order $$O(|t|^{n_{p,q,p,q,r',r''}}).$$
    
\end{enumerate}

Now, we assume that $(1), (2), (3), (4)$ hold at $(p,q,i,r)$-diagonalization step and $(5),(6)$ hold for all $(p',q',i',r')$-diagonalization step with $(p',q',i',r')>(p,q,i,r)$. Then for $(p',q',i',r')$ and $(p'',q'',i'',r'')$, both $< (p,q,i,r)$, and such that either $(p',q',i')\neq (p,q,i)$ or $(p'',q'',i'')\neq (p,q,i)$, the supplementary term to the entry $M_{p',q',p'',q'',r',r'',i',i''}$ is $$-M_{p',q',p,q,r',r,i',i}M_{p,q,p,q,r,r,i,i}^{-1}M_{p,q,p'',q'',r,r'',i,i''},$$ which has order less than or equal to \begin{equation*}
    \begin{split}
       &\ n_{p',q',p,q,r',r}-n_{p,q,p,q,r,r}+n_{p,q,p'',q'',r,r''}-1 \\
      \leq & \ n_{p',q',p'',q'',r',r''}-1. 
    \end{split}
\end{equation*} The negative one arises since either $(p',q',i')\neq (p,q,i)$ or $(p'',q'',i'')\neq (p,q,i)$. Also, for $(p,q,i,r')$ and $(p,q,i,r'')$, both $< (p,q,i,r)$, the supplementary term to the entry $M_{p,q,p,q,r',r'',i,i}$ is $$-M_{p,q,p,q,r',r,i,i}M_{p,q,p,q,r,r,i,i}^{-1}M_{p,q,p,q,r,r'',i,i},$$ which has order less than or equal to \begin{equation*}
    \begin{split}
       &\ n_{p,q,p,q,r',r}-n_{p,q,p,q,r,r}+n_{p,q,p,q,r,r''} \\
      = & \ n_{p,q,p,q,r',r''}. 
    \end{split}
\end{equation*} Thus if we let $(\hat{p},\hat{q},\hat{i},\hat{r})$ be the predecessor of $(p,q,i,r)$, then we have seen $(5), (6)$ hold for $(p,q,i,r)$-diagonalization step and $(1),(3),(4)$ hold for $(\hat{p},\hat{q},\hat{i},\hat{r})$-diagonalization step. Finally, to see $(2)$ for $(\hat{p},\hat{q},\hat{i},\hat{r})$, we notice that by $(5)$ and $(6)$ for all the steps before $(\hat{p},\hat{q},\hat{i},\hat{r})$, the only contribution of order $n_{\hat{p},\hat{p},\hat{q},\hat{q},\hat{r},\hat{r}}$ to $M_{\hat{p},\hat{p},\hat{q},\hat{q},\hat{r},\hat{r},\hat{i},\hat{i}}$ comes from those entries $M_{\hat{p},\hat{p},\hat{q},\hat{q},r',r'',\hat{i},\hat{i}}$ with $r'$, $r''\leq\hat{r}$. As a consequence, by Lemma \ref{com}, we see that $$M_{\hat{p},\hat{p},\hat{q},\hat{q},\hat{r},\hat{r},\hat{i},\hat{i}}=(-1)^{d-\hat{p}+\hat{r}}\tilde{C}_{\hat{p}-\hat{r}}^{\hat{p},\hat{q}}\lambda_{\hat{i}}^{\hat{p},\hat{q}}(\textup{Im}(z))^{\hat{p}+\hat{q}-d-2\hat{r}}+O(|t|^{\hat{p}+\hat{q}-d-2\hat{r}-1}).$$ This proves the theorem.

\end{proof}

\begin{remark} \label{rmk'}
In the proof, we also see that there are real analytic orthogonal basis $v_i(z)$ of $\textup{exp}(zN)F^k\cap \overline{\textup{exp}(zN)F^{d-k}}$ with respect to the Hermitian form $(\sqrt{-1})^dS(\bullet,\overline{\bullet})$ such that the growth of order of $(\sqrt{-1})^dS(v_i(z),\overline{v_i(z)})$ are $\textup{Im}(z)^{l_i}$ with $k-d\leq l_i\leq k$. The asymptotic order $l_i$ is the smallest integer among those integers $l$ such that $$\lim_{\text{Im}(z)\rightarrow\infty }e^{-zN}v_i(z)\in W_{l+d}.$$ This is compatible with Deligne-Schmid's theorem \cite[Theorem 6.6$'$]{schmid} in the case of the degeneration of polarized Hodge structures, which states that a (multi-valued) flat section $v(q)$ belongs to $W_l$ if and only if, along a radical ray,  $$S(Cv(q),\overline{v(q)})=O((-\textup{log}\ |q|)^{l-d})$$ as $|q|$ tends to zero.

\end{remark}

\bibliographystyle{plain}
\bibliography{refs}
\end{document}